\documentclass[10pt]{article}


\usepackage{amssymb}
\usepackage{amsmath,amsfonts,mathtools}
\usepackage{amsthm}
\usepackage{latexsym}
\usepackage{mathrsfs}
\usepackage{color}
\usepackage{float}
\usepackage{enumitem}
\usepackage{algorithm,algorithmic}
\usepackage{soul}
\allowdisplaybreaks[3]


\usepackage[a4paper]{geometry}
\geometry{hmargin=1.0in,vmargin=1.0in}

\newcommand{\beq}{\begin{equation}}
\newcommand{\eeq}{\end{equation}}
\newcommand{\beqa}{\begin{eqnarray}}
\newcommand{\eeqa}{\end{eqnarray}}
\newcommand{\beqas}{\begin{eqnarray*}}
\newcommand{\eeqas}{\end{eqnarray*}}
\newcommand{\ba}{\begin{array}}
\newcommand{\ea}{\end{array}}
\newcommand{\bi}{\begin{itemize}}
\newcommand{\ei}{\end{itemize}}
\newcommand{\nn}{\nonumber}

\DeclareMathOperator*{\argmin}{argmin}  


\newcommand{\mcX}{{\mathcal X}}
\newcommand{\mcY}{{\mathcal Y}}
\newcommand{\mcL}{{\mathcal L}}


\newcommand{\prox}{\mathrm{prox}}
\newcommand{\dom}{\mathrm{dom}}
\newcommand{\dist}{\mathrm{dist}}



\newtheorem{lemma}{Lemma}
\newtheorem{thm}{Theorem}

\newtheorem{defi}{Definition}

\newtheorem{assumption}{Assumption}

\newtheorem{rem}{Remark}

\newcounter{spb}
\setcounter{spb}{1}

\def\bfx{{\rm \bf x}}
\def\bfy{{\rm \bf y}}
\def\bfz{{\rm \bf z}}

\def\cL{{\cal L}}

\def\cO{{\cal O}}

\def\tlambda{{\tilde \lambda}}
\def\cI{{\mathscr I}}

\def\tg{{\tilde g}}

\def\tL{{\widetilde L}}

\def\tx{{\hat x}}
\def\ty{{\hat y}}

\def\bR{{\mathbb{R}}}

\def\tf{{\tilde f}}

\def\xe{{x_\epsilon}}
\def\ye{{y_\epsilon}}

\def\bbK{\mathbb{K}}
\def\Lag{{\widehat \cL}}
\def\tcL{{\widetilde \cL}}

\def\bh{{h}}
\def\bH{{H}}

\def\h{\bar h}

\def\tL{{\widetilde L}}
\def\bk{{ k}}

\definecolor{ngreen}{RGB}{38,217,169}


\title{Solving bilevel optimization via sequential minimax optimization\thanks{This work was partially supported by the Air Force Office of Scientific Research under Award FA9550-24-1-0343, the Office of Naval Research under Award N00014-24-1-2702, and the National Science Foundation under Awards 2211491 and 2435911. It was primarily conducted during Sanyou Mei's Ph.D. studies at the University of Minnesota.}}
\author{
Zhaosong Lu
\thanks{
Department of Industrial and Systems Engineering, University of Minnesota, USA (email: {\tt zhaosong@umn.edu}).}
\and
Sanyou Mei
\thanks{Department of Industrial Engineering and Decision Analytics, Hong Kong University of Science and Technology, Hong Kong, China (email: {\tt symei@ust.hk}).}
}

\date{May 12, 2024 (Revised: November 5, 2025)}

\begin{document}
\maketitle

\begin{abstract}
In this paper we propose a sequential minimax optimization (SMO) method for solving a class of constrained bilevel optimization problems in which the lower-level part is a possibly nonsmooth convex optimization problem, while the upper-level part is a possibly nonconvex optimization problem. Specifically, SMO applies a first-order method to solve a sequence of minimax subproblems, which are obtained by employing a hybrid of modified augmented Lagrangian and penalty schemes on the bilevel optimization problems. Under suitable assumptions, we establish an \emph{operation complexity} of $\mathcal{O}(\varepsilon^{-7}\log\varepsilon^{-1})$ and $\mathcal{O}(\varepsilon^{-6}\log\varepsilon^{-1})$, measured in terms of fundamental operations, for SMO in finding an $\varepsilon$-KKT (Karush-Kuhn-Tucker) solution of the bilevel optimization problems with merely convex and strongly convex lower-level objective functions, respectively. 
The latter result improves the previous best-known operation complexity by a factor of $\varepsilon^{-1}$. Preliminary numerical results demonstrate significantly superior computational performance compared to the recently developed first-order penalty method.
\end{abstract}

\noindent {\bf Keywords:} bilevel optimization, minimax optimization, first-order methods, operation complexity

\medskip

\noindent {\bf Mathematics Subject Classification:} 90C26, 90C30, 90C47, 90C99, 65K05 

\section{Introduction} \label{introduction}

Bilevel optimization (BO) is a two-level hierarchical optimization, which is typically in the form of 
 \begin{equation}\label{prob}
\begin{array}{rl}
f^*=\min & f(x,y)\\
\mbox{s.t.}& y\in\argmin\limits_z\{\tf(x,z)|\tg(x,z)\leq0\}.\footnote{}
\end{array}
\end{equation}
\footnotetext{For ease of reading, throughout this paper the tilde symbol is particularly used for the functions related to the lower-level optimization problem. Besides,  ``$\argmin$'' denotes the set of optimal solutions of the associated problem.} BO has widely been used in many areas, including adversarial training \cite{Madry18,mirrlees1999theory,szegedy2013intriguing},   continual learning \cite{lopez2017gradient}, hyperparameter tuning \cite{bennett2008bilevel,franceschi2018bilevel,okuno2021lp}, image reconstruction \cite{crockett2022bilevel}, meta-learning \cite{bertinetto2018meta,ji2020convergence,rajeswaran2019meta}, neural architecture search \cite{feurer2019hyperparameter,liu2018darts}, reinforcement learning \cite{hong2023two,konda1999actor}, and Stackelberg games \cite{von2010market}. More applications about it can be found in \cite{bard2013practical,colson2007overview,dempe2002foundations,dempe2015bilevel,
dempe2020bilevel,shimizu2012nondifferentiable} and the references therein. 
Theoretical properties including optimality conditions of \eqref{prob} have been extensively
studied in the literature (e.g., see \cite{dempe2020bilevel,dempe2013bilevel,ma2021combined,vicente1994bilevel,ye2020constraint}).  

Numerous methods have been developed for solving some special cases of \eqref{prob}. For example,  constraint-based methods \cite{hansen1992new,shi2005extended}, deterministic gradient-based methods \cite{franceschi2017forward,franceschi2018bilevel,grazzi2020iteration,hu2023improved,maclaurin2015gradient,
pedregosa2016hyperparameter,rajeswaran2019meta}, and stochastic gradient-based methods \cite{chen2022single,guo2021randomized,hong2023two,huang2021biadam,
huang2025efficiently,ji2021bilevel,khanduri2021near,kwon2023fully,li2022fully,yang2021provably} 
were proposed for solving  \eqref{prob} with $\tg\equiv 0$,  $f$,  $\tf$ being smooth, and $\tf$ being \emph{strongly convex} with respect to $y$. For a similar case as this but with $\tf$ being \emph{convex} with respect to $y$, a zeroth-order method was recently proposed in \cite{chen2023bilevel}, and also numerical methods were developed in \cite{li2023novel,sow2022primal,liu2022bome} by solving \eqref{prob} as a single or sequential smooth constrained optimization problems. Besides, when all the functions in \eqref{prob} are smooth and $\tf,\,\tg$ are \emph{convex} with respect to $y$, gradient-type methods were proposed by solving  a mathematical program with equilibrium constraints resulting from replacing the lower-level optimization problem of \eqref{prob} by its first-order optimality conditions (e.g., see \cite{allende2013solving,luo1996mathematical,outrata1998nonsmooth}).  Furthermore, a single-loop gradient method was recently introduced in \cite{yao2024constrained} based on a novel reformulation of the bilevel optimization problem as a single-level smooth optimization problem using Moreau envelope of the Lagrangian function of the lower-level problem.  Recently, difference-of-convex (DC) algorithms were developed in \cite{ye2023difference} for solving \eqref{prob} with $f$ being a DC function, and $\tf$, $\tg$ being convex functions. Lately, a practically efficient multi-stage gradient descent and ascent algorithm was developed in \cite{wang2023effective} for \eqref{prob} with $\tg\equiv 0$, $f$ being convex and Lipschitz continuous, and $\tf$ being strongly convex and Lipschitz smooth via solving the aforementioned minimax reformulation of \eqref{prob}.  In addition, penalty methods were proposed in \cite{ishizuka1992double,lu2024first-bilevel,shen2023penalty} for solving \eqref{prob}. Notably, the paper \cite{lu2024first-bilevel} demonstrates for the first time that BO can be approximately solved as minimax optimization. Specifically, it reformulates BO as minimax optimization by a novel double penalty scheme and proposes a first-order method with complexity guarantees for BO via solving a single minimax problem. In addition, a novel single-loop Hessian-free algorithm based on a doubly regularized gap function was proposed in \cite{yao2024overcoming} for solving \eqref{prob}. More discussion on algorithmic development for BO can be found in  \cite{bard2013practical,colson2007overview,dempe2020bilevel,liu2021investigating,
sinha2017review,vicente1994bilevel}) and the references therein.

In this paper, we consider problem \eqref{prob} under similar assumptions as in \cite{lu2024first-bilevel}.  Specifically, we assume that problem \eqref{prob} has at least one optimal solution and satisfies the following assumptions.
\begin{assumption}\label{a1}
\begin{enumerate}[label=(\roman*)]
\item $f(x,y)=f_1(x,y)+f_2(x)$ and $\tf(x,y)=\tf_1(x,y)+\tf_2(y)$ are respectively $L_f$- and $L_\tf$-Lipschitz continuous on $\mcX \times \mcY$ with $\mcX:=\dom\,f_2$ and $\mcY:=\dom\,\tf_2$, where $\tf_1(x,\cdot)$ is $\sigma$-strongly-convex for any given $x\in\mcX$ for some $\sigma\geq0$,\footnote{$\tf_1(x,\cdot)$ is either merely convex for any given $x\in\mcX$ if $\sigma=0$ or strongly convex with  parameter $\sigma$ if $\sigma>0$. }  $f_1$, $\tf_1$ are respectively $L_{\nabla f_1}$- and $L_{\nabla \tf_1}$-smooth on $\mcX \times \mcY$, and $f_2: \bR^n \to \bR\cup \{+\infty\}$  and $\tf_2:\bR^m \to \bR\cup \{+\infty\}$ are proper closed convex functions.
\item The proximal operators associated with $f_2$ and $\tf_2$ can be exactly evaluated. 
\item $\tg:\bR^n\times\bR^m\to\bR^l$ is $L_\tg$-Lipschitz continuous and $L_{\nabla \tg}$-smooth on $\mcX\times\mcY$, and $\tg_i(x,\cdot)$ is convex for all $x\in\mcX$ and $i=1,2,\dots, l$. 
\item The sets $\mcX$ and $\mcY$ (namely, $\dom\,f_2$ and $\dom\,\tf_2$) are compact.
\end{enumerate}
\end{assumption}

Due to the sophisticated structure described in Assumption \ref{a1}, existing methods except the first-order penalty method \cite{lu2024first-bilevel} are generally not applicable to problem \eqref{prob}. In particular, instead of solving  \eqref{prob} directly,  the latter method applies a first-order method \cite{lu2024first} to solve an approximate counterpart of  \eqref{prob} given by a single minimax problem 
\beq \label{penalty-prob}
\min_{x,y}\max_z f(x,y)+\rho\big(\tf(x,y)+\mu\left\|[\tg(x,y)]_+\right\|^2-\tf(x,z)-\mu\left\|[\tg(x,z)]_+\right\|^2\big)
\eeq 
with a suitable choice of penalty parameters $\rho, \mu>0$. Notice that the minimax problem \eqref{penalty-prob} can be obtained from \eqref{prob} by performing two steps: (i) apply the classical quadratic penalty scheme to the lower-level problem of \eqref{penalty-prob} and approximate  \eqref{prob} by a simpler BO problem,  $\min_{x,y}\{f(x,y)|
 y\in\argmin_z \tf(x,z)+\mu\left\|[\tg(x,z)]_+\right\|^2\}$, which can be viewed as
\beq \label{penalty-prob1}
\min_{x,y}\big\{f(x,y)\big|\tf(x,y)+\mu\left\|[\tg(x,y)]_+\right\|^2 \leq \min\limits_{z}\tf(x,z)+\mu\left\|[\tg(x,z)]_+\right\|^2\big\},
\eeq
and (ii) apply a penalty method to \eqref{penalty-prob1} to obtain the minimax problem \eqref{penalty-prob}. While this method enjoys complexity guarantees for finding an approximate KKT (Karush-Kuhn-Tucker) solution of \eqref{prob}, it may suffer from practical inefficiency issues. Specifically, the penalty parameters are pre-chosen to achieve a desired operational complexity and may be overly large in practice. Additionally, the classical quadratic penalty scheme is used to obtain the minimax problem \eqref{penalty-prob}, and its associated penalty parameter could be much larger than the one associated with an augmented Lagrangian scheme.

To address the aforementioned issues, in this paper we propose a novel sequential minimax optimization (SMO) method to solve problem \eqref{prob}, which substantially outperforms the first-order penalty method \cite{lu2024first-bilevel} as observed in our numerical experiment.  Specifically, instead of using the classical quadratic penalty scheme for the lower-level problem of \eqref{prob}, we propose a new augmented Lagrangian scheme by replacing the quadratic penalty function $\tf(x,y)+\mu\left\|[\tg(x,y)]_+\right\|^2$ with a  \emph{modified} augmented Lagrangian function $\tf(x,z)+\frac{1}{2\rho\mu}\left(\|[\lambda+\mu\tg(x,z)]_+\|^2-\|\lambda\|^2\right)$ for $\lambda\in\bR^l_+$ and $\mu>0$.\footnote{The standard augmented Lagrangian function associated with the lower-level problem of \eqref{prob} is $\tf(x,z)+\frac{1}{2\mu}\left(\|[\lambda+\mu\tg(x,z)]_+\|^2-\|\lambda\|^2\right)$.  Therefore, $\tf(x,z)+\frac{1}{2\rho\mu}\left(\|[\lambda+\mu\tg(x,z)]_+\|^2-\|\lambda\|^2\right)$ can be viewed as a modified augmented Lagrangian function. Its advantage over the standard augmented Lagrangian function will be discussed in Section \ref{sec:SMO}.} Performing such a replacement in \eqref{penalty-prob} results in a new minimax problem
\beq \label{AL-minimax-subprob}
\min_{x,y}\max_z f(x,y)+\rho\Big(\tf(x,y)+\frac{1}{2\rho\mu}\|[\lambda+\mu\tg(x,y)]_+\|^2-\tf(x,z)-\frac{1}{2\rho\mu}\|[\lambda+\mu\tg(x,z)]_+\|^2\Big).
\eeq 
Our SMO method solves a sequence of minimax subproblems in the form of \eqref{AL-minimax-subprob}. Specifically, let $\{\rho_k\}$, $\{\mu_k\}$, $(x^0,y^0,z^0,\lambda^0)$ be given. At each iteration $k \geq 0$, SMO finds an approximate solution $(x^{k+1},y^{k+1},z^{k+1})$ of \eqref{AL-minimax-subprob} with $\lambda=\lambda^k$, $\rho=\rho_k$ and $\mu=\mu_k$, starting at $(x^k,y^k,z^k)$, and then updates $\lambda^{k+1}$ according to
$\lambda^{k+1}=[\lambda^k+\mu_k\tg(x^{k+1},z^{k+1})]_+$. The resulting SMO enjoys the following notable features. 
\bi
\item It uses only the first-order information of the problem. Specifically, its fundamental operations consist only of gradient evaluations of $\tg$ and the smooth component of $f$ and $\tf$ and also proximal operator evaluations of the nonsmooth component of $f$ and $\tf$ (see Algorithm \ref{AL-alg}).
\item It has theoretical guarantees on operation complexity, which is measured by the aforementioned fundamental operations, for finding an $\varepsilon$-KKT solution of \eqref{prob}. Specifically,  it enjoys an operation complexity of $\cO(\varepsilon^{-7}\log \varepsilon^{-1})$ when the lower objective function $\tf_1(x,\cdot)$ is merely convex (see Theorem \ref{complexity}).  Moreover, it enjoys an operation complexity of $\cO(\varepsilon^{-6}\log \varepsilon^{-1})$ when $\tf_1(x,\cdot)$ is strongly convex, which \emph{ improves} the previous best-known operation complexity \cite[Theorem 5]{lu2024first-bilevel} by a factor of $\varepsilon^{-1}$ (see Theorem \ref{complexity-str}).
\item It demonstrates significantly superior computational performance compared to the first-order penalty method  \cite{lu2024first-bilevel} (see Section \ref{sec:exp}).
\ei

The rest of this paper is organized as follows. In Subsection \ref{notation}, we introduce some notation and terminology. 
In Section \ref{sec:SMO}, we propose a sequential minimax optimization method for solving \eqref{prob} and study its complexity. Preliminary numerical results and the proofs of the main results are  presented in Sections \ref{sec:exp} and \ref{sec:proof}, respectively.

 \subsection{Notation and terminology}  \label{notation}
The following notation will be used throughout this paper. Let $\bR^n$ denote the Euclidean space of dimension $n$ and $\bR^n_+$ denote the nonnegative orthant in $\bR^n$. The standard inner product, $l_1$-norm and Euclidean norm are denoted by $\langle\cdot,\cdot\rangle$, $\|\cdot\|_1$ and $\|\cdot\|$, respectively. For any $v\in\bR^n$, let $v_+$ denote the nonnegative part of $v$, that is, $(v_+)_i=\max\{v_i,0\}$ for all $i$. For any two vectors $u$ and $v$, $(u;v)$ denotes the vector resulting from stacking $v$ under $u$. Given a point $x$ and a closed set $S$ in $\bR^n$, let $\dist(x,S)=\min_{x'\in S} \|x'-x\|$ and $\cI_S$ denote the indicator function associated with $S$.

A function or mapping $\phi$ is said to be \emph{$L_{\phi}$-Lipschitz continuous} on a set $S$ if $\|\phi(x)-\phi(x')\| \leq L_{\phi} \|x-x'\|$ for all $x,x'\in S$. In addition, it is said to be \emph{$L_{\nabla\phi}$-smooth} on $S$ if $\|\nabla\phi(x)-\nabla\phi(x')\| \leq L_{\nabla\phi} \|x-x'\|$ for all $x,x'\in S$. For a closed convex function $p:\bR^n\to \bR\cup\{+\infty\}$, associated with $p$ is denoted by  
$\prox_p$,  that is,
\[
\prox_p(x) = \argmin_{x'\in\bR^n} \left\{ \frac{1}{2}\|x' - x\|^2 + p(x') \right\} \quad \forall x \in \bR^n.
\]
Given that evaluation of $\prox_{\gamma p}(x)$ is often as cheap as $\prox_p(x)$, we count the evaluation of $\prox_{\gamma p}(x)$ as one evaluation of proximal operator of $p$ for any $\gamma>0$ and $x\in\bR^n$. 

For a lower semicontinuous function $\phi:\bR^n\to \bR\cup\{\infty\}$, its \emph{domain} is the set $\dom\, \phi := \{x| \phi(x)<\infty\}$. The \emph{upper subderivative} of $\phi$ at $x\in \dom\, \phi$ in a direction $d\in\bR^n$ is defined by
\[
\phi'(x;d) = \limsup\limits_{x' \stackrel{\phi}{\to} x,\, t \downarrow 0} \inf_{d' \to d} \frac{\phi(x'+td')-\phi(x')}{t},
\] 
where $t\downarrow 0$ means both $t > 0$ and $t\to 0$, and $x' \stackrel{\phi}{\to} x$ means both $x' \to x$ and $\phi(x')\to \phi(x)$. The \emph{subdifferential} of $\phi$ at $x\in \dom\, \phi$ is the set 
\[
\partial \phi(x) = \{s\in\bR^n\big| s^T d \leq \phi'(x; d) \ \ \forall d\in\bR^n\}.
\]
We use $\partial_{x_i} \phi(x)$ to denote the subdifferential with respect to $x_i$.  
In addition, for an upper semicontinuous function $\phi$, its subdifferential is defined as $\partial \phi=-\partial (-\phi)$. If $\phi$ is locally Lipschitz continuous, the above definition of subdifferential coincides with the Clarke subdifferential. Besides, if $\phi$ is convex, it coincides with the ordinary subdifferential for convex functions. Also, if $\phi$ is continuously differentiable at $x$ , we simply have $\partial \phi(x) = \{\nabla \phi(x)\}$, where $\nabla \phi(x)$ is the gradient of $\phi$ at $x$. In addition, it is not hard to verify that $\partial (\phi_1+\phi_2)(x)=\nabla \phi_1(x)+\partial \phi_2(x)$ if $\phi_1$ is continuously differentiable at $x$ and $\phi_2$ is lower or upper semicontinuous at $x$. See \cite{clarke1990optimization,ward1987nonsmooth} for more details.

Finally, we introduce an (approximate) primal-dual stationary point (e.g., see \cite{dai2024optimality,dai2020optimality,kong2021accelerated}) for a general minimax problem
\begin{equation}\label{eg}
\min_{x}\max_{y}\Psi(x,y),
\end{equation}
where $\Psi(\cdot,y): \bR^n \to \bR \cup\{+\infty\}$ is a lower semicontinuous function, and $\Psi(x,\cdot): \bR^m \to \bR \cup\{-\infty\}$ is an upper semicontinuous function.
\begin{defi}  \label{def1}
A point $(x,y)$ is said to be a primal-dual stationary point of the minimax problem \eqref{eg} if 
\[
0 \in \partial_x\Psi(x,y), \quad 0\in\partial_y\Psi(x,y).
\]
In addition, for any $\epsilon>0$, a point $(\xe,\ye)$ is said to be an $\epsilon$-primal-dual stationary point of the minimax problem \eqref{eg} if
\begin{equation*}
\dist\left(0,\partial_x\Psi(\xe,\ye)\right)\leq\epsilon,\quad\dist\left(0,\partial_y\Psi(\xe,\ye)\right)\leq\epsilon.
\end{equation*}
\end{defi}
 
\section{A sequential minimax optimization method for problem \eqref{prob}} \label{sec:SMO}

As discussed in Section \ref{introduction}, the first-order penalty method \cite{lu2024first-bilevel} may suffer practical inefficiency issues due to possibly overly large penalty parameters. To address these issues, in this section we propose a sequential minimax optimization (SMO) method for finding an approximate KKT solution of \eqref{prob}, which substantially outperforms the first-order penalty method \cite{lu2024first-bilevel} as observed in our numerical experiment. 

To motivate our SMO method,  we first apply a \emph{modified} augmented Lagrangian (AL) scheme to migrate the constraint $\tg(x,y)\leq0$ of the lower-level problem of \eqref{prob} to its objective function and obtain an approximation to  \eqref{prob} given by
\begin{equation}\label{prob-AL-ref}
\begin{array}{rl}
\min & f(x,y)\\
\mbox{s.t.}& y\in\argmin\limits_z\{\tf(x,z)+\frac{1}{2\rho\mu}\left(\|[\lambda+\mu\tg(x,z)]_+\|^2-\|\lambda\|^2\right)\}
\end{array}
\end{equation}
for some $\rho, \mu>0$ and $\lambda\in\bR^l_+$.  By applying a penalty scheme, problem \eqref{prob-AL-ref} can be approximated by 
{\small
\[
\min\limits_{x,y} f(x,y)+\rho\left(\tf(x,y)+\frac{1}{2\rho\mu}\left(\|[\lambda+\mu\tg(x,y)]_+\|^2-\|\lambda\|^2\right)
-\min_z\Big\{\tf(x,z)+\frac{1}{2\rho\mu}\left(\|[\lambda+\mu\tg(x,z)]_+\|^2-\|\lambda\|^2\right)\Big\}\right),
\]
}
which is equivalent to the minimax problem
\beq\label{AL-sub0}
\min_{x,y}\max_{z}\mcL(x,y,z,\lambda;\rho,\mu),
\eeq
 where 
\begin{align}
\mcL(x,y,z,\lambda;\rho,\mu) :=&f(x,y)+\rho\tf(x,y)+\frac{1}{2\mu}\|[\lambda+\mu\tg(x,y)]_+\|^2-\rho\tf(x,z)-\frac{1}{2\mu}\|[\lambda+\mu\tg(x,z)]_+\|^2. \label{Lag}
\end{align}

As observed above, the function $\tf(x,z)+\frac{1}{2\rho\mu}\left(\|[\lambda+\mu\tg(x,z)]_+\|^2-\|\lambda\|^2\right)$, which serves as a modified AL function for the lower-level problem of \eqref{prob}, induces the minimax problem \eqref{AL-sub0}, where the penalty parameters $\rho$ and $\mu$ are separately associated with the lower-level objective and constraint functions, respectively. This separation plays a crucial role in designing a practically efficient SMO method that achieves the desired operation complexity. In contrast,  the standard AL function $\tf(x,z)+\frac{1}{2\mu}\left(\|[\lambda+\mu\tg(x,z)]_+\|^2-\|\lambda\|^2\right)$ for the lower-level problem of \eqref{prob} leads to a minimax problem that lacks this property and over-penalizes the constraints. As a result, the corresponding first-order method based on such a minimax problem cannot guarantee finding an $\varepsilon$-KKT solution of problem \eqref{prob}, since the condition $|\tilde f(x^k,y^{k})-\tilde f^*(x^{k})|\leq\epsilon$ may fail to hold  for the generated solution sequence when $k$ is sufficiently large, which is required by the definition of an $\varepsilon$-KKT point (see Definition~\ref{def2}). Furthermore, under Assumption \ref{a1}, one can observe that $\mcL$ possesses the following desirable structure.
\bi
\item For any given $\rho,\mu>0$ and $\lambda\in\bR_+^l$, $\mcL$ is the sum of smooth function $h(x,y,z)$ with Lipschitz continuous gradient and possibly nonsmooth function $p(x,y)-q(z)$ with exactly computable proximal operator, where
\begin{align*}
&h(x,y,z)=f_1(x,y)+\rho\tf_1(x,y)+\frac{1}{2\mu}\|[\lambda+\mu\tg(x,y)]_+\|^2-\rho\tf_1(x,z)-\frac{1}{2\mu}\|[\lambda+\mu\tg(x,z)]_+\|^2,\\
&p(x,y)=f_2(x)+\rho\tf_2(y), \quad q(z)=\rho\tf_2(z).
\end{align*}
\item $\mcL$ is nonconvex in $(x,y)$ but $\rho\sigma$-strongly-concave in $z$.
\ei

Thanks to the above nice structure of $\mcL$, an approximate primal-dual stationary point of problem \eqref{AL-sub0} can be suitably found by Algorithm \ref{mmax-alg2} (see Appendix \ref{appendix-B}).
Additionally, recall from the above discussion that the minimax problem provides an approximation to the bilevel optimization problem \eqref{prob}. Based on these observations, we propose solving problem \eqref{prob} by iteratively solving a sequence of minimax subproblems in the form of \eqref{AL-sub0}, similar to the classical AL method for constrained nonlinear optimization.

Specifically, let $\{(\rho_k,\mu_k)\}$ be a sequence of penalty parameters. Given the current iterate $(x^k,y^k,z^k,\lambda^k)$, our method calls Algorithm 2 or 3 (see Appendix \ref{appendix-A}), depending on $\sigma=0$ or $\sigma>0$, to obtain an approximate solution $y_{\rm init}^k$ of $\min_z\tL(x^{k},z,\lambda^k;\rho_k,\mu_k)$, where
\beq\label{tLag}
\tcL(x,z,\lambda;\rho,\mu):=\tf(x,z)+\frac{1}{2\rho\mu}\|[\lambda+\mu \tg(x,z)]_+\|^2.
\eeq
It then uses $(x^k,y^k,y_{\rm init}^k)$ as the initial point and calls Algorithm \ref{mmax-alg2} (see Appendix \ref{appendix-B}) with properly chosen parameters to find an approximate primal-dual stationary point $(x^{k+1},y^{k+1},z^{k+1})$ of $\min_{x,y}\max_z\mcL(x,y,\\ z,\lambda^k;\rho_k,\mu_k)$. Subsequently, it updates $\lambda^{k+1}$ in a standard manner.

Let $\tg_{\rm hi}=\max\{\|\tg(x,y)\| |(x,y)\in\mcX\times\mcY\}$. We now present our method for solving problem \eqref{prob} below.

\begin{algorithm}[H]
\caption{A sequential minimax optimization (SMO) method for \eqref{prob}}\label{AL-alg}
\begin{algorithmic}[1]
\REQUIRE $\varepsilon,\tau\in(0,1)$, $\epsilon_0\in(\tau\varepsilon,1]$, $x^0\in\mcX$, $z^0\in\mcY$, $\epsilon_k=\epsilon_0\tau^k$, $\rho_k=\epsilon_k^{-1}$, $\mu_k=\epsilon_k^{-3}$,  and $\lambda^0\in\bR_+^l$.
\FOR{$k=0,1\dots$}
\STATE Call Algorithm \ref{opt} (see Appendix \ref{appendix-A}) if $\sigma=0$ or Algorithm \ref{opt-str} (see Appendix \ref{appendix-A})  if $\sigma>0$ 
with $\Psi(\cdot)\leftarrow\tcL(x^k,\cdot,\lambda^k;\rho_k,\mu_k)$, $\tilde\epsilon\leftarrow\epsilon_k$, $\sigma_\phi\leftarrow\sigma$, $L_{\nabla\phi}\leftarrow\tL_k$, $\tilde x^0\leftarrow y^k$ to find an approximate solution $y_{\rm init}^k$ of $\min_z\tL(x^{k},z,\lambda^k;\rho_k,\mu_k)$ such that
\beq\label{y-nf}
\tcL(x^k,y_{\rm init}^k,\lambda^k;\rho_k,\mu_k)-\min_z\tcL(x^k,z,\lambda^k;\rho_k,\mu_k)\leq\epsilon_k,
\eeq
where $\tcL$ is given in \eqref{tLag} and
\beq\label{tLk}
\tL_k= L_{\nabla\tf_1}+\rho_k^{-1}(\mu_kL_\tg^2+\mu_k\tg_{\rm hi}L_{\nabla \tg}+\|\lambda^k\|L_{\nabla\tg}).
\eeq
\STATE Call Algorithm \ref{mmax-alg2} (see Appendix \ref{appendix-B}) with $\epsilon\leftarrow\epsilon_k$, $\hat x^0\leftarrow (x^k,y^k_{\rm init})$, $\hat y^0\leftarrow z^k$, $L_{\nabla h}\leftarrow L_k$, and $\hat\epsilon_0\leftarrow\epsilon_k/(2\sqrt{\mu_k})$ if $\sigma=0$ and $\hat\epsilon_0\leftarrow\epsilon_k/2$ if $\sigma>0$ to find an $\epsilon_k$-primal-dual stationary point $(x^{k+1},y^{k+1},z^{k+1})$ of 
\begin{equation}\label{AL-sub}
\min_{x,y}\max_z\mcL(x,y,z,\lambda^k;\rho_k,\mu_k),
\end{equation}
where
\beq\label{Lk}
L_k= L_{\nabla f_1}+2\rho_kL_{\nabla\tf_1}+2\mu_kL_\tg^2+2\mu_k\tg_{\rm hi}L_{\nabla \tg}+2\|\lambda^k\|L_{\nabla\tg}.
\eeq
\STATE Set $\lambda^{k+1}=[\lambda^k+\mu_k\tg(x^{k+1},z^{k+1})]_+$.
\STATE If $\epsilon_k\leq\varepsilon$, terminate the algorithm and output $(x^{k+1},y^{k+1})$.
\ENDFOR
\end{algorithmic}
\end{algorithm}

We next provide some remarks regarding the well-definedness of Algorithm~\ref{AL-alg}.

\begin{rem}
Notice that $\tL(x^k,y,\lambda^k;\rho_k,\mu_k)=\phi(y)+\tf_2(y)$ with {\small $\phi(y)=\tf_1(x^k,y)+\|[\lambda^k+\mu_k\tg(x^k,y)]_+\|^2/(2\rho_k\mu_k)$}. By Assumption \ref{a1} and \eqref{tghi}, one can see that $\phi$ is $\tL_k$-smooth and $\sigma$-strongly-convex on $\dom\,P$ and the proximal operator of $\tf_2$ can be exactly evaluated. It then follows from this and Theorems \ref{thm-opt} and \ref{thm-opt-str} (see Appendix \ref{appendix-A}) that $y_{\rm init}^k$ satisfying \eqref{y-nf} can be successfully found in step 2 of Algorithm~\ref{AL-alg} by applying Algorithm \ref{opt} or \ref{opt-str} to the problem $\min_z\tL(x^{k},z,\lambda^k;\rho_k,\mu_k)$. In addition,  by Theorem \ref{mmax-thm} (see Appendix \ref{appendix-B}), one can see that an $\epsilon_k$-primal-dual stationary point of \eqref{AL-sub} can be successfully found in step 3 of Algorithm~\ref{AL-alg} by applying Algorithm \ref{mmax-alg2} to problem \eqref{AL-sub}. Consequently, Algorithm~\ref{AL-alg} is well-defined.
\end{rem}

\subsection{Complexity results for Algorithm \ref{AL-alg}}\label{sec:thm}
In this subsection we study \emph{iteration and operation complexity} for Algorithm \ref{AL-alg}. In particular, in order to characterize the approximate solution found by Algorithm~\ref{AL-alg}, we first introduce a notion called an $\varepsilon$-KKT solution of problem \eqref{prob}. Then we establish iteration and operation complexity of Algorithm \ref{AL-alg} for finding an $\cO(\varepsilon)$-KKT solution of \eqref{prob}.

For notational convenience, we define
\beq
\tf^*(x):=\min_z\{\tf(x,z)|\tg(x,z)\leq0\}.\label{tfstarx}
\eeq
Observe that problem \eqref{prob} can be equivalently reformulated as
\beq\label{prob-ref}
\min_{x,y}\{f(x,y)|\tf(x,y)\leq \tf^*(x), \  \tg(x,y)\leq0\}.
\eeq
The Lagrangian function associated with \eqref{prob-ref} is given by
\begin{equation}\label{def-L}
\Lag(x,y,\rho,\lambda_\bfy)=f(x,y)+\rho(\tf(x,y)-\tf^*(x))+\langle\lambda_\bfy,\tg(x,y)\rangle.
\end{equation}
In the same spirit of classical constrained optimization, one would naturally be interested in a KKT solution $(x,y)$ of \eqref{prob-ref}, namely, $(x,y)$ satisfies 
\beq \label{constr-cond}
\tf(x,y)\leq \tf^*(x), \quad \tg(x,y)\leq0, \quad \rho(\tf(x,y)- \tf^*(x))=0, \quad \langle \lambda_\bfy, \tg(x,y) \rangle=0,
\eeq
and moreover $(x,y)$ is a stationary point of the problem
\beq \label{min-prob1}
\min_{x',y'}\Lag(x',y',\rho,\lambda_\bfy)
\eeq 
for some $\rho\geq 0$ and $\lambda_\bfy\in\bR^l_+$. Yet, due to the sophisticated problem structure, characterizing a stationary point of  \eqref{min-prob1} is generally difficult. On the other hand, notice from Lemma \ref{dual-bnd} and
\eqref{def-L} that problem \eqref{min-prob1} is equivalent to the minimax problem
\[
\min_{x',y',\lambda_\bfz'}\max_{z'} \big\{f(x',y')+\rho\big(\tf(x',y')-\tf(x',z')-\langle\lambda_\bfz',\tg(x',z')\rangle\big)+\langle\lambda_\bfy,\tg(x',y')\rangle+\cI_{\bR^l_+}(\lambda_\bfz')\big\},\footnote{$\cI_{\bR^l_+}(\cdot)$ denotes the indicator function associated with the set $\bR^l_+$.}
\]
whose stationary point $(x,y,\lambda_\bfz,z)$, according to Definition \ref{def1} and Assumption \ref{a1}, satisfies 
\begin{align}
&0\in\partial f(x,y)+\rho\partial\tf(x,y) - \rho(\nabla_x\tf(x,z)+\nabla_x\tg(x,z)\lambda_\bfz;0)+\nabla \tg(x,y)\lambda_\bfy, \label{kkt-c1}\\ 
&0\in\rho(\partial_z\tf(x,z)+\nabla_z\tg(x,z)\lambda_\bfz),  \label{kkt-c2} \\
& \lambda_\bfz \in \bR^l_+, \quad \tg(x,z) \leq 0, \quad \langle \lambda_\bfz, \tg(x,z)\rangle=0.\footnote{} \label{kkt-c3}
\end{align}
\footnotetext{The relations in \eqref{kkt-c3} are equivalent to $0 \in -\tg(x,z)+ \partial \cI_{\bR^l_+}(\lambda_\bfz)$.}
Based on this observation, the equivalence of \eqref{prob} and \eqref{prob-ref}, and also the fact that \eqref{constr-cond} is equivalent to 
\beq \label{constr-cond-new}
\tf(x,y)=\tf^*(x), \quad \tg(x,y)\leq0, \quad \langle \lambda_\bfy, \tg(x,y) \rangle=0,
\eeq
we are instead interested in a (weak) KKT solution of problem \eqref{prob} and its inexact counterpart that are defined below.

\begin{defi}[{\bf KKT solution and $\epsilon$-KKT solution}] \label{def2}
The pair $(x,y)$ is said to be a KKT solution of problem \eqref{prob} if there exists $(z,\rho,\lambda_\bfy,\lambda_\bfz)\in\bR^m\times\bR_+\times\bR^l_+\times\bR^l_+$ such that 
\eqref{kkt-c1}-\eqref{constr-cond-new} hold. In addition, for any $\varepsilon>0$, $(x,y)$ is said to be an $\varepsilon$-KKT solution of problem \eqref{prob} if there exists $(z,\rho,\lambda_\bfy,\lambda_\bfz)\in\bR^m\times\bR_+\times\bR^l_+\times\bR^l_+$ such that
\begin{align*}
&\dist\big(0,\partial f(x,y)+\rho\partial\tf(x,y) - \rho\big(\nabla_x\tf(x,z)+\nabla_x\tg(x,z)\lambda_\bfz; 0\big)+\nabla \tg(x,y)\lambda_\bfy\big)\leq\varepsilon,\\
& \dist\big(0,\rho(\partial_z\tf(x,z)+\nabla_z\tg(x,z)\lambda_\bfz)\big)\leq\varepsilon, \\
& \|[\tg(x,z)]_+\| \leq \varepsilon, \quad |\langle \lambda_\bfz, \tg(x,z)\rangle| \leq\varepsilon, \\ 
&|\tf(x,y)-\tf^*(x)|\leq\varepsilon, \quad \|[\tg(x,y)]_+\|\leq\varepsilon, \quad |\langle \lambda_\bfy, \tg(x,y) \rangle|\leq\varepsilon,
\end{align*}
where $\tf^*$ is defined in \eqref{tfstarx}.
\end{defi}

The notions of KKT solution and $\epsilon$-KKT solution were initially introduced in \cite[Section 3]{lu2024first-bilevel}. Notably, it was demonstrated in \cite[Theorem 2]{lu2024first-bilevel} that under suitable assumptions, an $\epsilon$-KKT solution $(x,y)$ of problem \eqref{prob}, with conditions such as $\tg=0$, $f_2=0$, $\tf_2=0$, $\tf_1$ being twice differentiable, and $\tf_1(x,\cdot)$ being strongly convex, implies that $x$ is an $\cO(\epsilon)$-hypergradient-based stationary point of \eqref{prob}.

The notions of KKT solution and $\epsilon$-KKT solution were initially introduced in \cite[Section 3]{lu2024first-bilevel}. Notably, it was demonstrated in \cite[Theorem 2]{lu2024first-bilevel} that under suitable assumptions, an $\epsilon$-KKT solution $(x,y)$ of problem \eqref{prob}, with conditions such as $\tg=0$, $f_2=0$, $\tf_2=0$, $\tf_1$ being twice differentiable, and $\tf_1(x,\cdot)$ being strongly convex, implies that $x$ is an $\cO(\epsilon)$-hypergradient-based stationary point of \eqref{prob}.

We next study iteration and operation complexity for Algorithm \ref{AL-alg}. To proceed, recall that $\mcX=\dom\;f_2$ and $\mcY=\dom\;\tf_2$.  We define
\begin{align}
&\tf^*_{\rm hi}:=\sup\{\tf^*(x)|x\in\mcX\},\label{def-tFx}\\
&D_\bfx\coloneqq \max\{\|u-v\|\big|u,v\in\mcX\},\quad D_\bfy\coloneqq\max\{\|u-v\|\big|u,v\in\mcY\},\label{DxDy}\\
&f_{\rm hi}:=\max\{f(x,y)|(x,y)\in\mcX\times\mcY\},\quad f_{\rm low}:=\min\{f(x,y)|(x,y)\in\mcX\times\mcY\},\label{fhi}\\
&\tf_{\rm low}:=\min\{\tf(x,z)|(x,z)\in\mcX\times\mcY\},\quad\tg_{\rm hi}:=\max\{\|\tg(x,y)\|\big|(x,y)\in\mcX\times\mcY\},\label{tfhi}\\
&K:=\left\lceil(\log\varepsilon-\log\epsilon_0)/\log\tau\right\rceil_+,\quad\bbK:=\{0,1,\dots, K+1\},\quad\bbK-1=\{k-1| k\in\bbK\}.\label{def-K}
\end{align}
It then follows from Assumption \ref{a1}(iii) that 
\beq\label{tghi}
\|\nabla \tg(x,y)\|\leq L_\tg \qquad \forall (x,y)\in\mcX\times\mcY.
\eeq
In addition, by Assumption \ref{a1} and the compactness of $\mcX$ and $\mcY$, one can observe that $D_\bfx$, $D_\bfy$, $f_{\rm hi}$, $f_{\rm low}$, $\tf_{\rm low}$ and $\tg_{\rm hi}$ are finite. 
Moreover,  $\tf^*_{\rm hi}$ is also finite (see Lemma \ref{dual-bnd}(ii) in Section \ref{sec:proof}).

The following assumption will be used to establish complexity of Algorithm \ref{AL-alg}. 

\begin{assumption}[{\bf Slater's condition}]\label{a2}
There exists $\hat z_x\in\mcY$ for each $x\in\mcX$ such that $\tg_i(x,\hat z_x)<0$ for all $i=1,2,\dots, l$ and $G:=\inf\{-\tg_i(x,\hat z_x)|x\in\mcX,\ i=1,\dots, l\}>0$.\footnote{If Assumption \ref{a2} fails to hold, one may instead consider the perturbed counterpart of \eqref{prob} with $\tg(x,z)$ replaced by $\tg(x,z)-\epsilon$ for some suitable $\epsilon>0$, which clearly satisfies Assumption \ref{a2}.}
\end{assumption}

We are now ready to present an \emph{iteration and operation complexity} of Algorithm~\ref{AL-alg}, measured by the amount of evaluations of $\nabla f_1$, $\nabla\tf_1$, $\nabla\tg$ and proximal operators of $f_2$ and $\tf_2$, for finding an $\cO(\varepsilon)$-KKT solution of \eqref{prob}, whose proofs are deferred to Section \ref{sec:proof}.

\begin{thm}[{\bf iteration and operation complexity of Algorithm \ref{AL-alg} for problem  \eqref{prob} with $\sigma=0$}]\label{complexity}
Suppose that Assumptions \ref{a1} and \ref{a2} hold with $\sigma=0$, i.e., $\tf_1(x,\cdot)$ being convex but not strongly convex for any given $x\in\dom\,f_2$. Let $\{(x^k,y^k,z^k,\lambda^k)\}_{k\in\bbK}$ be generated by Algorithm \ref{AL-alg}, $f^*$, $\tf^*_{\rm hi}$, $D_\bfx$, $D_\bfy$, $f_{\rm hi}$, $f_{\rm low}$, $\tf_{\rm low}$, $\tg_{\rm hi}$ and $K$ be defined in \eqref{prob}, \eqref{def-tFx}, \eqref{DxDy}, \eqref{fhi}, \eqref{tfhi} and \eqref{def-K}, $L_f$, $L_\tf$, $L_{\nabla f_1}$, $L_{\nabla\tf_1}$, $L_\tg$, $L_{\nabla\tg}$ and $G$ be given in Assumptions \ref{a1} and \ref{a2}, and $\varepsilon$, $\epsilon_0$, $\tau$, $\mu_K$, $\rho_K$ and $\lambda_0$ be given in Algorithm \ref{AL-alg}. Let
\begin{align}
&\vartheta=\frac{1}{2}\|\lambda^0\|^2+\frac{\tf^*_{\rm hi}-\tf_{\rm low}}{1-\tau^4}+\frac{D_\bfy\epsilon_0}{1-\tau^3},\label{ht}\\
&L= L_{\nabla f_1}+2L_{\nabla\tf_1}+2L_\tg^2+2\tg_{\rm hi}L_{\nabla \tg}+2\sqrt{2\vartheta}L_{\nabla\tg},\quad\tL=L_{\nabla\tf_1}+L_\tg^2+\tg_{\rm hi}L_{\nabla \tg}+\sqrt{2\vartheta}L_{\nabla\tg},\label{hL}\\
&\alpha=\min\Big\{1, \sqrt{4/(D_\bfy L)}\Big\},\quad \delta= (2+\alpha^{-1})L (D_\bfx^2+D_\bfy^2)+\max\{1/D_\bfy,L/4\}D_\bfy^2,\label{ho}\\
&M=16\max\left\{1/(4L_\tg^2),2/(\alpha L_\tg^2)\right\}\left[(3L+1/(2D_\bfy))^2/\min\{2L_\tg^2,1/(2D_\bfy)\}+ 3L+1/(2D_\bfy)\right]^2\nn\\
&\qquad\, \times\Big(\delta+2\alpha^{-1}\big(f^*-f_{\rm low}+\tf^*_{\rm hi}-\tf_{\rm low}+L_\tf D_\bfy+3\vartheta+\tg_{\rm hi}^2+D_\bfy/4+L (D_\bfx^2+D_\bfy^2)\big)\Big),\label{hM}\\
&T= \big\lceil16\left(f_{\rm hi}-f_{\rm low}+1+D_\bfy/4\right) L+8(1+4D_\bfy^2L^2)\big\rceil_+, \label{hT}\\
&\lambda_\bfy^{K+1}=[\lambda^K+\mu_K\tg(x^{K+1},y^{K+1})]_+,\quad\lambda_\bfz^{K+1}=\rho_K^{-1}[\lambda^K+\mu_K\tg(x^{K+1},z^{K+1})]_+.\label{lam}\end{align}
Suppose that $\varepsilon^{-2}-8\tau^{-3}G^{-2}\vartheta\geq0$. 
Then the following statements hold.
\begin{enumerate}[label=(\roman*)]
\item Algorithm \ref{AL-alg} terminates after $K+1$ outer iterations and outputs an approximate point $(x^{K+1},y^{K+1})$ of \eqref{prob} satisfying
{\small
\begin{align}
&\dist\Big(0,\partial f(x^{K+1},y^{K+1})+\rho_K\partial\tf(x^{K+1},y^{K+1}) - \rho_K\big(\nabla_x\tf(x^{K+1},z^{K+1})+\nabla_x\tg(x^{K+1},z^{K+1})\lambda_\bfz^{K+1}; 0\big)\nn\\
&\qquad+\nabla \tg(x^{K+1},y^{K+1})\lambda_\bfy^{K+1}\Big)\leq\varepsilon,\label{kkt1}\\
& \dist\Big(0,\rho_K\big(\partial_z\tf(x^{K+1},z^{K+1})+\nabla_z\tg(x^{K+1},z^{K+1})\lambda_\bfz^{K+1}\big)\Big)\leq\varepsilon,\label{kkt2} \\
& \|[\tg(x^{K+1},z^{K+1})]_+\|\leq2\varepsilon^2 G^{-1}(\epsilon_0+L_\tf)D_\bfy,\label{kkt3}\\ 
&|\langle\lambda^{K+1}_\bfz,\tg(x^{K+1},z^{K+1})\rangle|\leq2\varepsilon^2 G^{-1}(\epsilon_0+L_\tf)D_\bfy \max\{\|\lambda^0\|,\ 2G^{-1}(\epsilon_0+L_\tf)D_\bfy\}, \label{kkt4}\\
& \|[\tg(x^{K+1},y^{K+1})]_+\|\leq2\varepsilon^2 G^{-1}(\epsilon_0+L_f+L_\tf)D_\bfy  , \label{kkt5}\\ 
&|\langle\lambda^{K+1}_\bfy,\tg(x^{K+1},z^{K+1})\rangle|\leq2\varepsilon G^{-1}(\epsilon_0+L_f+L_\tf)D_\bfy \max\{\|\lambda^0\|,\ 2G^{-1}(\epsilon_0+L_f+L_\tf)D_\bfy\},\label{kkt6}\\
&|\tf(x^{K+1},y^{K+1})-\tf^*(x^{K+1})|\leq\max\Big\{2\varepsilon^2 G^{-2}L_\tf(\epsilon_0+L_f+L_\tf)D_\bfy^2,\ \varepsilon^3\max\{\|\lambda^0\|,\ 2G^{-1}(\epsilon_0+L_\tf)D_\bfy\}/2\nn\\
&\qquad\qquad\qquad\qquad\qquad\qquad\qquad\qquad+\varepsilon\left(f_{\rm hi}-f_{\rm low}+1+D_\bfy/4+L_\tg^{-2}/4+2D_\bfy^2L\right)\Big\}.\label{kkt7}
\end{align}
}
\item The total number of evaluations of $\nabla f_1$, $\nabla\tf_1$, $\nabla \tg$ and proximal operators of $f_2$ and $\tf_2$ performed in Algorithm \ref{AL-alg} is no more than $N$, respectively, where
\begin{align}
N=&\left(\left\lceil96\sqrt{2}\left(1+\left(12L+2/D_y\right)/L_\tg^2\right)\right\rceil+2\right)\max\left\{2,\sqrt{D_yL}\right\}T(1-\tau^7)^{-1}\nn\\
&\ \ \ \times(\tau\varepsilon)^{-7}\left(56K\log(1/\tau)+56\log(1/\epsilon_0)+2(\log M)_++2+2\log(2T) \right)\nn\\
&\ \ \ +(\tau\varepsilon)^{-3/2}(1-\tau^{3/2})^{-1}D_\bfy\sqrt{2\tL}+K.\label{def-N}
\end{align}
\end{enumerate}
\end{thm}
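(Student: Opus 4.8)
The plan is to read Algorithm~\ref{AL-alg} as an inexact augmented Lagrangian outer loop and to convert the per-subproblem guarantees of its two subroutines into the global $\cO(\varepsilon)$-KKT certificate of part (i), and then to obtain part (ii) by summing the subroutine costs over the $K+1$ outer iterations. Termination after exactly $K+1$ iterations is immediate: since $\epsilon_k=\epsilon_0\tau^k$ with $\tau\in(0,1)$ and $K$ is as in \eqref{def-K}, $k=K$ is the first index with $\epsilon_k\le\varepsilon$, so the loop produces $(x^{K+1},y^{K+1})$ and stops. The stationarity inequalities \eqref{kkt1}--\eqref{kkt2} then follow directly from the two defining distance bounds of an $\epsilon_K$-primal-dual stationary point of \eqref{AL-sub} (Definition~\ref{def1}): differentiating \eqref{Lag} and using $\nabla_{(x,y)}\frac{1}{2\mu}\|[\lambda+\mu\tg(x,y)]_+\|^2=\nabla\tg(x,y)[\lambda+\mu\tg(x,y)]_+$ (and the analogous $z$-identity, whose contribution lands only in the $x$-block, explaining the ``$;0$'' in \eqref{kkt1}), the subdifferentials $\partial_{(x,y)}\mcL$ and $\partial_z\mcL$ coincide \emph{exactly} with the expressions inside the distances in \eqref{kkt1}--\eqref{kkt2} once $\lambda_\bfy^{K+1},\lambda_\bfz^{K+1}$ are set as in \eqref{lam}; the bound $\epsilon_K\le\varepsilon$ closes them.

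The technical heart, on which every other estimate rests, is a uniform bound on the dual iterates, which I would prove by induction as $c_k:=\frac{1}{2\rho_k\mu_k}\|\lambda^{k+1}\|^2\le\vartheta$ with $\vartheta$ as in \eqref{ht}. The inductive step is a telescoping estimate. Since $\partial_z\mcL=-\rho_k\partial_z\tcL$, the $z$-component of the $\epsilon_k$-stationarity makes $z^{k+1}$ a near-minimizer of the convex map $\tcL(x^{k+1},\cdot,\lambda^k;\rho_k,\mu_k)$ of \eqref{tLag} over $\mcY$, with optimality gap $\cO(\epsilon_kD_\bfy)$. Comparing its $\tcL$-value against the value at the true constrained minimizer $z^*$ of $\tf(x^{k+1},\cdot)$ subject to $\tg\le0$ (where feasibility forces $\|[\lambda^k+\mu_k\tg(x^{k+1},z^*)]_+\|\le\|\lambda^k\|$, producing the self-term $\tau^4c_{k-1}$) yields a recursion $c_k\le\tau^4c_{k-1}+(\tf^*_{\rm hi}-\tf_{\rm low})+\cO(\epsilon_kD_\bfy)$; the weighted geometric sums of the constant value-difference increment and of the $\cO(\epsilon_k)$ accuracy increment, which decays one power slower than the weight $\tau^4$, give precisely the $(1-\tau^4)^{-1}$ and $(1-\tau^3)^{-1}$ factors in \eqref{ht}. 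This bound gives $\|\lambda^k\|=\cO(\epsilon_{k-1}^{-2})$, which is exactly strong enough to guarantee $L_k\le\mu_kL$ and $\tL_k\le\rho_k^{-1}\mu_k\tL$ with the $k$-independent $L,\tL$ of \eqref{hL}, validating the smoothness constants passed to the subroutines.

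For the feasibility, complementarity, and value-gap conditions \eqref{kkt3}--\eqref{kkt7}, I would invoke Slater's condition (Assumption~\ref{a2}) to sharpen the multiplier bound at the terminal iterate. The map $\Phi(z)=\rho_K\tf(x^{K+1},z)+\langle\lambda^{K+1},\tg(x^{K+1},z)\rangle$ is convex and shares the gradient of $\rho_K\tcL$ at $z^{K+1}$, so approximate stationarity makes $z^{K+1}$ an $\epsilon_KD_\bfy$-minimizer of $\Phi$; evaluating at the Slater point $\hat z_{x^{K+1}}$ and using $\tg_i(x,\hat z_x)\le-G$ yields $\|\lambda^{K+1}\|_1=\cO(\rho_K)=\cO(\epsilon_K^{-1})$, one power sharper than the induction bound. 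Dividing the update rule then gives $\|[\tg(x^{K+1},z^{K+1})]_+\|\le\|\lambda^{K+1}\|/\mu_K=\cO(\epsilon_K^2)$, i.e.\ \eqref{kkt3}, and \eqref{kkt4},\eqref{kkt6} follow by Cauchy--Schwarz with the same estimate, while the companion argument at $y^{K+1}$ gives \eqref{kkt5}. The value-function gap \eqref{kkt7} is the most delicate: its lower half $\tf(x^{K+1},y^{K+1})\ge\tf^*(x^{K+1})-\cO(\varepsilon^2)$ uses near-feasibility of $y^{K+1}$ from \eqref{kkt5}, while its upper half combines the $(x,y)$-optimality delivered by the minimax solve with the penalty structure and the warm start \eqref{y-nf}, producing the $\cO(\varepsilon)$ branch that dominates the stated maximum.

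Finally, for part (ii) I would sum subroutine costs over $k=0,\dots,K$. Step~2 solves $\min_z\tcL(x^k,\cdot)$ to accuracy $\epsilon_k$ by the accelerated method of Theorem~\ref{thm-opt}, at cost $D_\bfy\sqrt{2\tL_k/\epsilon_k}$; substituting $\tL_k\le\epsilon_k^{-2}\tL$ turns this into $D_\bfy\sqrt{2\tL}\,\epsilon_k^{-3/2}$, and the geometric sum (ratio $\tau^{-3/2}$) gives the second, $(\tau\varepsilon)^{-3/2}(1-\tau^{3/2})^{-1}D_\bfy\sqrt{2\tL}$, term of \eqref{def-N}. Step~3 is bounded by Theorem~\ref{mmax-thm}; its cost is essentially quadratic in $L_{\nabla h}=L_k$ and inversely linear in the target accuracy $\epsilon_k$, so after $L_k\le\mu_kL$, $\hat\epsilon_0=\epsilon_k/(2\sqrt{\mu_k})$, and $\mu_k=\epsilon_k^{-3}$ the $\mu_k^2$-dependence leaves a per-iteration cost of order $\epsilon_k^{-7}\log\epsilon_k^{-1}$ with the $k$-independent $T,M,L$ absorbed into constants; summing $\sum_{k=0}^K\epsilon_k^{-7}$ produces the leading $(\tau\varepsilon)^{-7}(1-\tau^7)^{-1}$ factor and the $\log$ terms aggregate into the bracket of \eqref{def-N}, while the $+K$ accounts for the single $\tg$-evaluation and $\lambda$-update per outer iteration. \emph{I expect the dual bound of the second paragraph, together with its Slater-sharpened terminal version, to be the main obstacle}: forcing the $\epsilon_k$-exponents to close --- a control of $\|\lambda^k\|$ that is simultaneously loose enough to survive the diverging $\rho_k,\mu_k$ and tight enough to deliver $\cO(\varepsilon^2)$ feasibility residuals --- is exactly the step where the separation of $\rho$ and $\mu$ in the modified augmented Lagrangian is indispensable.
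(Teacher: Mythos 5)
Your proposal is correct and follows essentially the same route as the paper: your inductive recursion $c_k\le\tau^4c_{k-1}+(\tf^*_{\rm hi}-\tf_{\rm low})+\cO(\epsilon_kD_\bfy)$ is exactly the telescoped dual bound of Lemma~\ref{l-ycnstr} (yielding \eqref{ht} and the key validations $L_k\le\mu_k L$, $\tL_k\le\rho_k^{-1}\mu_k\tL$), your Slater-sharpened terminal multiplier bound $\|\lambda^{K+1}\|_1=\cO(\rho_K)$ with the cross-term $\langle\lambda^{K+1},\mu_K^{-1}\lambda^K\rangle$ absorbed via $\varepsilon^{-2}\ge 8\tau^{-3}G^{-2}\vartheta$ reproduces the paper's feasibility/complementarity lemmas (Lemma~\ref{l-subdcnstr}), the two halves of \eqref{kkt7} match Lemma~\ref{l-fgap} (the lower half additionally invoking the uniform bound $\|\lambda^*\|\le G^{-1}L_\tf D_\bfy$ of Lemma~\ref{dual-bnd}(i), which you leave implicit), and your geometric-sum cost aggregation is the paper's proof of part (ii) via Lemmas~\ref{l-subp} and~\ref{l-init}. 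Two immaterial quibbles: the one-step increment is really $D_\bfy\epsilon_k^2$ rather than $\cO(\epsilon_kD_\bfy)$ (both close to the same $\vartheta$), and the $+K$ in \eqref{def-N} arises from the ceilings in the step-2 iteration counts $N_k'$, not from the per-iteration $\lambda$-update.
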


\begin{rem}
One can observe from Theorem \ref{complexity} that Algorithm \ref{AL-alg} enjoys an iteration complexity of $\cO(\log\varepsilon^{-1})$ and an operation complexity of $\cO(\varepsilon^{-7}\log\varepsilon^{-1})$, measured by the amount of evaluations of $\nabla f_1$, $\nabla \tf_1$, $\nabla \tg$ and proximal operators of $f_2$ and $\tf_2$, for finding an $\cO(\varepsilon)$-KKT solution $(x^{K+1},y^{K+1})$ of \eqref{prob} satisfying 
\begin{align}
&\dist\Big(0,\partial f(x^{K+1},y^{K+1})+\rho_K\partial\tf(x^{K+1},y^{K+1}) +\nabla \tg(x^{K+1},y^{K+1})\lambda_\bfy^{K+1} \nonumber \\
&\qquad - \rho_K\big(\nabla_x\tf(x^{K+1},z^{K+1})+\nabla_x\tg(x^{K+1},z^{K+1})\tilde\lambda_\bfz^{K+1}; 0\big)\Big)\leq\varepsilon,  \label{cond1}\\
& \dist\Big(0,\rho_K\big(\partial_z\tf(x^{K+1},z^{K+1})+\nabla_z\tg(x^{K+1},z^{K+1})\lambda_\bfz^{K+1}\big)\Big)\leq\varepsilon,  \label{cond2} \\
& \|[\tg(x^{K+1},z^{K+1})]_+\|=\cO(\varepsilon^2),\quad|\langle\lambda^{K+1}_\bfz,\tg(x^{K+1},z^{K+1})\rangle|=\cO(\varepsilon^2),  \label{cond3}\\
& \|[\tg(x^{K+1},y^{K+1})]_+\|=\cO(\varepsilon^2),\quad |\langle\lambda^{K+1}_\bfy,\tg(x^{K+1},z^{K+1})\rangle|=\cO(\varepsilon), \label{cond4} \\
&|\tf(x^{K+1},y^{K+1})-\tf^*(x^{K+1})|=\cO(\varepsilon),  \label{cond5}
\end{align}
where $\tf^*$ is defined in \eqref{tfstarx}, $\rho_K=(\epsilon_0\tau^K)^{-1}$, and $\lambda_\bfy^{K+1},\lambda_\bfz^{K+1}\in\bR_+^l$ are given in \eqref{lam}.
\end{rem}

\begin{thm}[{\bf iteration and operation complexity of Algorithm \ref{AL-alg} for problem  \eqref{prob}  with $\sigma>0$}]\label{complexity-str}
Suppose that Assumptions \ref{a1} and \ref{a2} hold with $\sigma>0$, i.e., $\tf_1(x,\cdot)$ being strongly convex with parameter $\sigma$ for any given $x\in\dom\,f_2$. Let $\{(x^k,y^k,z^k,\lambda^k)\}_{k\in\bbK}$ be generated by Algorithm \ref{AL-alg}, $f^*$, $\tf^*_{\rm hi}$, $D_\bfx$, $D_\bfy$, $\tf_{\rm low}$, $f_{\rm low}$, $f_{\rm hi}$, $\tg_{\rm hi}$, $K$, $\vartheta$, $L$, $\tL$, $\lambda_\bfy^{K+1}$ and $\lambda_\bfz^{K+1}$ be defined in \eqref{prob}, \eqref{def-tFx}, \eqref{DxDy}, \eqref{fhi}, \eqref{tfhi}, \eqref{def-K}, \eqref{ht}, \eqref{hL} and \eqref{lam}, $\sigma$, $L_f$, $L_\tf$, $L_{\nabla f_1}$, $L_{\nabla\tf_1}$, $L_\tg$, $L_{\nabla\tg}$ and $G$ be given in Assumptions \ref{a1} and \ref{a2}, and $\varepsilon$, $\epsilon_0$, $\tau$, $\mu_K$, $\rho_K$ and $\lambda_0$ be given in Algorithm \ref{AL-alg}. Let
\begin{align}
&\tilde\alpha=\min\left\{1, \sqrt{8\sigma/L}\right\},\quad \tilde\delta= (2+\tilde\alpha^{-1})(D_\bfx^2+D_\bfy^2)L+\max\{1/D_\bfy,L/4\}D_\bfy^2,\label{ho-str}\\
&\widetilde M=16\max\left\{1/(4L_\tg^2),2/(\tilde\alpha L_\tg^2)\right\}\left[9L^2/\min\{2L_\tg^2,\sigma\}+ 3L\right]^2\nn\\
&\qquad\, \times\Big(\tilde\delta+2\tilde\alpha^{-1}\big(f^*-f_{\rm low}+\tf^*_{\rm hi}-\tf_{\rm low}+L_\tf D_\bfy+3\vartheta+\tg_{\rm hi}^2+L(D_\bfx^2+D_\bfy^2)\big)\Big),\label{hM-str}\\
&\widetilde T= \big\lceil16\left(f_{\rm hi}-f_{\rm low}+1\right) L+8(1+\sigma^{-2}L^2)\big\rceil_+. \label{hT-str}
\end{align}
Suppose that $\varepsilon^{-2}-8\tau^{-3}G^{-2}\vartheta\geq0$. 
Then the following statements hold.
\begin{enumerate}[label=(\roman*)]
\item Algorithm \ref{AL-alg} terminates after $K+1$ outer iterations and outputs an approximate point $(x^{K+1},y^{K+1})$ of problem \eqref{prob} satisfying \eqref{kkt1}-\eqref{kkt6} and
 {\small
\begin{align}
|\tf(x^{K+1},y^{K+1})-\tf^*(x^{K+1})|\leq\max\Big\{&2\varepsilon^2 G^{-2}L_\tf(\epsilon_0+L_f+L_\tf)D_\bfy^2,\ \varepsilon^3\max\{\|\lambda^0\|,\ 2G^{-1}(\epsilon_0+L_\tf)D_\bfy\}/2\nn\\
&+\varepsilon\left(f_{\rm hi}-f_{\rm low}+1+L_\tg^{-2}/4+\sigma^{-2}L/2\right)\Big\}.\label{kkt7-str}\end{align}
}
\item The total number of evaluations of $\nabla f_1$, $\nabla\tf_1$, $\nabla \tg$ and proximal operators of $f_2$ and $\tf_2$ performed in Algorithm \ref{AL-alg} is no more than $\widetilde N$, respectively, where
\begin{align}
\widetilde N=&3397\max\left\{2,\sqrt{L/(2\sigma)}\right\}\widetilde T(1-\tau^6)^{-1}\nn\\
&\ \times(\tau\varepsilon)^{-6}\left(38K\log(1/\tau)+38\log(1/\epsilon_0)+2(\log \widetilde M)_++2+2\log(2\widetilde T) \right)\nn\\
&\ +2(\tau\varepsilon)^{-1}(1-\tau)\left\lceil\sqrt{\tL/\sigma}+1\right\rceil\max\left\{1,\left\lceil2\log(2\tL D_\bfy^2)+6K\log(1/\tau)-6\log\epsilon_0\right\rceil\right\}+K.\label{def-N-str}
\end{align}
\end{enumerate}
\end{thm}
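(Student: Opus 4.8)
The plan is to follow the same architecture as the proof of Theorem \ref{complexity} for the case $\sigma=0$, with strong convexity entering in two places: the lower-level subproblem $\min_z\tcL(x^k,\cdot,\lambda^k;\rho_k,\mu_k)$ is now $\sigma$-strongly convex, so the linearly convergent Algorithm \ref{opt-str} (governed by Theorem \ref{thm-opt-str}) replaces Algorithm \ref{opt} in step 2; and $\mcL(x,y,\cdot,\lambda^k;\rho_k,\mu_k)$ from \eqref{Lag} is $\rho_k\sigma$-strongly concave in $z$, which accelerates Algorithm \ref{mmax-alg2} (via Theorem \ref{mmax-thm}) in step 3. The iteration complexity is immediate: since $\epsilon_k=\epsilon_0\tau^k$ and $K=\lceil(\log\varepsilon-\log\epsilon_0)/\log\tau\rceil_+$, we have $\epsilon_K\leq\varepsilon$, so the test in step 5 first triggers at $k=K$, giving $K+1$ outer iterations and output $(x^{K+1},y^{K+1})$.

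The first substantive step is to establish a uniform bound $\tfrac{1}{2}\|\lambda^k\|^2\leq\vartheta$ on the dual sequence for all $k\in\bbK$, with $\vartheta$ as in \eqref{ht}; this is what makes the constants $L,\tL$ of \eqref{hL} valid surrogates for the per-iteration quantities $L_k,\tL_k$ in \eqref{Lk},\eqref{tLk}. I would derive a one-step recursion of the form $\tfrac{1}{2}\|\lambda^{k+1}\|^2\leq\tfrac{1}{2}\|\lambda^k\|^2+c_1\tau^{4k}+c_2\tau^{3k}$ by combining the multiplier update \eqref{lam}, the approximate $z$-optimality of $(x^{k+1},z^{k+1})$ coming from the $\epsilon_k$-primal-dual stationarity, and Lemma \ref{dual-bnd} (which, under Slater's Assumption \ref{a2}, bounds the exact lower-level multiplier and guarantees $\tf^*_{\rm hi}<\infty$). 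Summing the geometric tails produces the three terms of $\vartheta$ with $c_1=\tf^*_{\rm hi}-\tf_{\rm low}$ and $c_2=D_\bfy\epsilon_0$, so $\|\lambda^k\|\leq\sqrt{2\vartheta}$ throughout.

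With dual boundedness in hand, the KKT gradient conditions \eqref{kkt1}-\eqref{kkt2} follow directly from Definition \ref{def1} by expanding $\partial_{(x,y)}\mcL$ and $\partial_z\mcL$ from \eqref{Lag}: the gradient of $\tfrac{1}{2\mu}\|[\lambda+\mu\tg(\cdot)]_+\|^2$ produces exactly the multipliers $\lambda_\bfy^{K+1}=[\lambda^K+\mu_K\tg(x^{K+1},y^{K+1})]_+$ and $\lambda^{K+1}=[\lambda^K+\mu_K\tg(x^{K+1},z^{K+1})]_+=\rho_K\lambda_\bfz^{K+1}$ of \eqref{lam}, after which the $\epsilon_K$-stationarity bound together with $\epsilon_K\leq\varepsilon$ gives \eqref{kkt1}-\eqref{kkt2} verbatim. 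The feasibility and complementarity estimates \eqref{kkt3}-\eqref{kkt6} then follow from \eqref{lam}, the bound $\|\lambda^k\|\leq\sqrt{2\vartheta}$, and Slater's Assumption \ref{a2}, which converts the lower-level constraint violation into the AL optimality gap divided by the margin $G$; substituting $\mu_K=\epsilon_K^{-3}$, $\rho_K=\epsilon_K^{-1}$ and $\epsilon_K\leq\varepsilon$ yields the stated $\cO(\varepsilon^2)$ and $\cO(\varepsilon)$ bounds. The lower-level gap \eqref{kkt7-str} is obtained by bounding $\tf(x^{K+1},y^{K+1})$ above using the approximate AL-minimality \eqref{y-nf} and the $\epsilon_K$-stationarity, and $\tf^*(x^{K+1})$ below using the modified AL function \eqref{tLag}; here $\sigma>0$ sharpens the estimate and is responsible for the $\sigma^{-2}L$ term replacing the $2D_\bfy^2L$ term of \eqref{kkt7}, while the $\rho_K^{-1}=\epsilon_K$ penalty balance produces the dominant $\cO(\varepsilon)$ branch.

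Finally, for part (ii), the per-iteration cost splits into the initialization cost and the minimax-solve cost. Theorem \ref{thm-opt-str} bounds the former by $\cO(\lceil\sqrt{\tL_k/\sigma}\,\rceil\log(1/\epsilon_k))$ proximal-gradient steps (linear convergence from strong convexity), producing the last line of \eqref{def-N-str}; Theorem \ref{mmax-thm}, applied with strong-concavity modulus $\rho_k\sigma$, step-size $\tilde\alpha$, and the constants $\widetilde M,\widetilde T$ of \eqref{ho-str}-\eqref{hT-str}, bounds the latter. Two structural features reduce the exponent from $7$ to $6$ relative to Theorem \ref{complexity}: the $\rho_k\sigma$-strongly-concave inner maximization over $z$ converges linearly, so the required initial accuracy is only $\hat\epsilon_0=\epsilon_k/2$ rather than the much smaller $\epsilon_k/(2\sqrt{\mu_k})=\epsilon_k^{5/2}/2$ used when $\sigma=0$; and the condition-number factor $\sqrt{L_k/(\rho_k\sigma)}$ it incurs is milder than the $\sqrt{\mu_k}$-type factor of the merely-concave case. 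With $L_k=\cO(\mu_k)=\cO(\epsilon_k^{-3})$ each iteration then costs $\cO(\epsilon_k^{-6})$ up to logarithmic factors, and summing $\sum_{k=0}^{K}\epsilon_k^{-6}=\epsilon_0^{-6}\sum_{k=0}^{K}\tau^{-6k}$ yields the factor $(\tau\varepsilon)^{-6}(1-\tau^6)^{-1}$ in \eqref{def-N-str}. The main obstacle is the dual-boundedness recursion of the second paragraph and its interaction with the $\cO(\varepsilon)$ gap \eqref{kkt7-str}: one must track how the \emph{approximate} solution of each minimax subproblem propagates into both the multiplier bound and the optimality gap, and verify that the accumulated errors stay summable so that the single choice $\mu_k=\epsilon_k^{-3}$, $\rho_k=\epsilon_k^{-1}$ simultaneously delivers the $\cO(\varepsilon^2)$ feasibility, the $\cO(\varepsilon)$ gap, and the $\cO(\varepsilon^{-6}\log\varepsilon^{-1})$ overall count.
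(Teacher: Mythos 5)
Your overall architecture coincides with the paper's: the same outer-iteration count $K+1$, the same expansion of the $\epsilon_K$-primal-dual stationarity of \eqref{AL-sub} into \eqref{kkt1}--\eqref{kkt2}, the same Slater-margin mechanism for \eqref{kkt3}--\eqref{kkt6}, the same use of Theorem \ref{mmax-thm} (with $\hat\sigma_y=\sigma\rho_k$ and $\hat\epsilon_0=\epsilon_k/2$) and of Theorem \ref{thm-opt-str} for the two subsolvers, and the same geometric summation producing $(\tau\varepsilon)^{-6}(1-\tau^6)^{-1}$. Your account of why the exponent drops from $7$ to $6$ --- the condition-number factor $\sqrt{L_k/(2\sigma\rho_k)}=\cO(\epsilon_k^{-1})$ replacing the $\sqrt{D_\bfy L_k\epsilon_k^{-1}}=\cO(\epsilon_k^{-2})$ factor of the merely-concave case --- is also the correct mechanism, matching \eqref{mmax-N-str} versus \eqref{mmax-N}.

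However, your dual-boundedness step contains a genuine error, not a notational slip. You claim a \emph{uniform} bound $\tfrac12\|\lambda^k\|^2\le\vartheta$ from a recursion $\tfrac12\|\lambda^{k+1}\|^2\le\tfrac12\|\lambda^k\|^2+c_1\tau^{4k}+c_2\tau^{3k}$ with geometrically \emph{decaying} increments. The one-step estimate actually obtainable from the $\epsilon_t$-stationarity and \eqref{p-ineq} is $\|\lambda^{t+1}\|^2-\|\lambda^t\|^2\le2\rho_t\mu_t(\tf^*_{\rm hi}-\tf_{\rm low})+2\mu_tD_\bfy\epsilon_0$, and since $\rho_t\mu_t=\epsilon_0^{-4}\tau^{-4t}$ and $\mu_t=\epsilon_0^{-3}\tau^{-3t}$, these increments \emph{grow} geometrically: you have flipped the sign of the exponents. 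The factors $(1-\tau^4)^{-1}$ and $(1-\tau^3)^{-1}$ in $\vartheta$ do not come from a convergent series but from bounding the growing sums by their last term, $\sum_{t=0}^{k-1}\rho_t\mu_t\le\rho_{k-1}\mu_{k-1}/(1-\tau^4)$, so what one actually gets (the paper's Lemma \ref{l-ycnstr}) is only the normalized bound $\|\lambda^k\|^2\le2\rho_k\mu_k\vartheta$; the multipliers may grow like $\epsilon_k^{-2}$, and no uniform bound holds in general. Your false uniform bound also explains why your sketch never invokes the hypothesis $\varepsilon^{-2}-8\tau^{-3}G^{-2}\vartheta\ge0$: in the paper this is precisely what guarantees $\mu_k^{-1}\|\lambda^k\|\le\sqrt{2\rho_k\mu_k^{-1}\vartheta}\le G/2$, so the Slater margin survives the growing multipliers and yields $\|\lambda^{k+1}\|\le2G^{-1}(\epsilon_k+\rho_kL_\tf)D_\bfy$ together with \eqref{lambday-bnd2} --- and it is this $G$-based bound, not $\sqrt{2\vartheta}$, that drives \eqref{kkt3}--\eqref{kkt6} and the $\max\{\|\lambda^0\|,\cdot\}$ terms in \eqref{kkt7-str}. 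The repair is local: replace your uniform bound by the normalized one and insert the margin argument; the surrogate inequalities you need downstream, $L_k\le\mu_kL$ and $\tL_k\le\rho_k^{-1}\mu_k\tL$, still follow under the corrected bound because $\sqrt{2\rho_k\mu_k\vartheta}\le\mu_k\sqrt{2\vartheta}$ whenever $\rho_k\le\mu_k$.
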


\begin{rem}
One can observe from Theorem \ref{complexity-str} that Algorithm \ref{AL-alg} enjoys an iteration complexity of $\cO(\log\varepsilon^{-1})$ and an operation complexity of $\cO(\varepsilon^{-6}\log\varepsilon^{-1})$, measured by the amount of evaluations of $\nabla f_1$, $\nabla \tf_1$, $\nabla \tg$ and proximal operators of $f_2$ and $\tf_2$, for finding an $\cO(\varepsilon)$-KKT solution $(x^{K+1},y^{K+1})$ of \eqref{prob} satisfying \eqref{cond1}-\eqref{cond5}. Such an operation complexity significantly improves the previous best-known operation complexity $\cO(\varepsilon^{-7}\log\varepsilon^{-1})$ established in \cite[Theorem 5]{lu2024first-bilevel} by a factor of $\varepsilon^{-1}$.
\end{rem}

\section{Numerical results}\label{sec:exp}
In this section, we conduct some preliminary experiments to test the performance of our SMO method (Algorithm \ref{AL-alg}), and compare it with a first-order penalty (FOP) method  (\cite[Algorithm 4]{lu2024first-bilevel}). Both algorithms are coded in Matlab and all the computations are performed on a laptop with a 2.30 GHz Intel i9-9880H 8-core processor and 16 GB of RAM.

\subsection{Constrained bilevel linear optimization}
In this subsection, we consider constrained bilevel linear optimization in the form of
\begin{equation}\label{linear}
\begin{array}{rl}
\min & c^Tx+d^Ty+\cI_{[-1,1]^n}(x)\\ [4pt]
\mbox{s.t.}& y\in\argmin\limits_{z}\left\{\tilde d^Tz+\cI_{[-1,1]^m}(z)\big|\widetilde Ax+\widetilde Bz-\tilde b\leq0\right\}, 
\end{array}
\end{equation}
where $c\in\bR^n$, $d, \tilde d\in\bR^m$,  $\tilde b\in\bR^l$, $\widetilde A\in\bR^{l\times n}$, $\widetilde B\in\bR^{l\times m}$, and $\cI_{[-1,1]^n}(\cdot)$ and $\cI_{[-1,1]^m}(\cdot)$ are the indicator functions of $[-1,1]^n$ and $[-1,1]^m$ respectively.

For each triple $(n,m,l)$, we randomly generate $10$ instances of problem \eqref{linear}. Specifically,  we first randomly generate $c$ and $d$ with all the entries independently chosen from the standard normal distribution. We then randomly generate $\widetilde A$ and $\widetilde B$ with all the entries independently chosen from a normal distribution with mean $0$ and standard deviation $0.01$. In addition, we randomly generate $\hat y\in[-1,1]^m$ with all the entries independently chosen from a normal distribution with mean $0$ and standard deviation $0.1$ and then projected to $[-1,1]^m$ and choose $\tilde d$ and $\tilde b$ such that  $\hat y$ is an optimal solution of the lower-level optimization of \eqref{linear} with $x=0$.

Notice that \eqref{linear} is a special case of \eqref{prob} with $f(x,y)= c^Tx+d^Ty+\cI_{[-1,1]^n}(x)$, $\tf(x,z)=\tilde d^Tz+\cI_{[-1,1]^m}(z)$ and $\tg(x,z)=\widetilde Ax+\widetilde Bz-\tilde b$.   
We now apply SMO and FOP to solve \eqref{linear}.  In particular, we choose $0$ as the initial point for both methods. In addition, we set $(\varepsilon,\epsilon_0,\tau)=(10^{-2},1,0.8)$ for SMO.  To enhance the efficiency of FOP, we adopt a dynamic updating scheme on its penalty and tolerance parameters. Specifically, we set $\rho_k=5^{k-1}$, $\varepsilon_k=\rho_k^{-1}$ and 
$x_{-1}=0$ for \cite[Algorithm 2]{lu2024first-bilevel}. For each $k >1$, let $(x^{k-1}, y^{k-1})$ be the output of \cite[Algorithm 2]{lu2024first-bilevel} with $(\varepsilon, \rho)=(\varepsilon_{k-1}, \rho_{k-1})$.  We run \cite[Algorithm 2]{lu2024first-bilevel} with $(\varepsilon, \rho)=(\varepsilon_k, \rho_k)$ and $(x^{k-1},\tilde y^{k-1})$ as the initial point to generate $(x^k,y^k)$, where $\tilde y^{k-1}\in\argmin_z\tf(x^{k-1},z)$ is found by CVX \cite{grant2014cvx}. We terminate both algorithms once $\epsilon_{\bk} \leq 10^{-2}$ for SMO, $\varepsilon_{\bk}\leq10^{-2}$ for FOP, and 
$(x^{\bk},y^{\bk})$ satisfies
\begin{equation*}
\|[\tg(x^{\bk},y^{\bk})]_+\|\leq10^{-2},\quad \tf(x^{\bk},y^{\bk})-\tf^*(x^{\bk})\leq10^{-2}
\end{equation*}
for some $\bk$, and output $(x^{\bk},y^{\bk})$ as an approximate solution of\eqref{linear}, where $\tf^*$ is defined in \eqref{tfstarx} and the value $\tf^*(x^{\bk})$ is computed by CVX \cite{grant2014cvx}.

The computational results of SMO and FOP for problem \eqref{linear} with the instances randomly generated above are presented in Table \ref{t-linear}. In detail, the values of $n$, $m$ and $l$ are listed in the first three columns. For each triple $(n, m, l)$, the average initial objective value $f(x^0,\hat y)$ with $\hat y$ being generated above,\footnote{Note that $(x^0,y_{\rm init}^0)$ may not be a feasible point of \eqref{linear}. Nevertheless, $(x^0,\hat y)$ is a feasible point of \eqref{linear} due to $x^0=0$ and the particular way for generating instances of \eqref{linear}. Besides, \eqref{linear} can be viewed as an implicit optimization problem in terms of the variable $x$. It is thus reasonable to use $f(x^0,\hat y)$ as the initial objective value for the purpose of comparison.} the average final objective value $f(x^{\bk},y^{\bk})$ and the average CPU time (in seconds) over $10$ random instances are given in the rest of the columns. One can observe that both SMO and FOP find an approximate solution with much lower  objective value than the initial objective value. Moreover, SMO outputs an approximate solution with a similar objective value as FOP, while SMO significantly outperforms FOP in terms of average CPU time.

\begin{table}[H]
\centering
\resizebox{0.95\linewidth}{!}{
\begin{tabular}{ccc||c||cc||cc}
\hline
 & & &Initial objective value&\multicolumn{2}{c||}{Final objective value}&\multicolumn{2}{c}{CPU time (seconds)}\\
$n$ & $m$&$l$ & &SMO&FOP&SMO&FOP\\\hline
$100$&$100$&$5$&$0.22$&$-75.78$&$-75.75$&$7.4$&$22.1$\\
$200$&$200$&$10$&$-0.38$&$-154.20$&$-154.18$&$12.5$&$107.9$\\
$300$&$300$&$15$&$-0.11$&$-246.70$&$-246.66$&$24.1$&$267.8$\\
$400$&$400$&$20$&$0.34$&$-305.97$&$-305.89$&$37.0$&$561.6$\\
$500$&$500$&$25$&$0.96$&$-394.74$&$-394.70$&$63.8$&$719.8$\\ \hline
\end{tabular}
}
\caption{Numerical results for problem \eqref{linear}}
\label{t-linear}
\end{table}

\subsection{Constrained bilevel optimization with quadratic upper level and linear lower level}
In this subsection, we consider constrained bilevel optimization with quadratic upper level and linear lower level in the form of
\begin{equation}\label{Qlinear}
\begin{array}{rl}
\min & x^TAx+x^TBy+y^TCy+c^Tx+d^Ty+\cI_{[-1,1]^n}(x)\\ [4pt]
\mbox{s.t.}& y\in\argmin\limits_{z}\left\{\tilde d^Tz+\cI_{[-1,1]^m}(z)\big|\widetilde Ax+\widetilde Bz-\tilde b\leq0\right\}, 
\end{array}
\end{equation}
where $A\in\bR^{n\times n}$, $B\in\bR^{n\times m}$, $C\in\bR^{m\times m}$, $c\in\bR^n$, $d, \tilde d\in\bR^m$,  $\tilde b\in\bR^l$, $\widetilde A\in\bR^{l\times n}$, $\widetilde B\in\bR^{l\times m}$, and $\cI_{[-1,1]^n}(\cdot)$ and $\cI_{[-1,1]^m}(\cdot)$ are the indicator functions of $[-1,1]^n$ and $[-1,1]^m$ respectively.

For each triple $(n,m,l)$, we randomly generate $10$ instances of problem \eqref{Qlinear}. Specifically,  we first randomly generate $A$, $B$, $C$, $c$ and $d$ with all the entries independently chosen from a normal distribution with mean $0$ and standard deviation $0.1$. We then randomly generate $\widetilde A$ and $\widetilde B$ with all the entries independently chosen from a normal distribution with mean $0$ and standard deviation $0.01$. In addition, we randomly generate $\hat y\in[-1,1]^m$ with all the entries independently chosen from a normal distribution with mean $0$ and standard deviation $0.1$ and then projected to $[-1,1]^m$ and choose $\tilde d$ and $\tilde b$ such that  $\hat y$ is an optimal solution of the lower-level optimization of \eqref{Qlinear} with $x=0$.

Notice that \eqref{Qlinear} is a special case of \eqref{prob} with $f(x,y)= x^TAx+x^TBy+y^TCy+c^Tx+d^Ty+\cI_{[-1,1]^n}(x)$, $\tf(x,z)=\tilde d^Tz+\cI_{[-1,1]^m}(z)$ and $\tg(x,z)=\widetilde Ax+\widetilde Bz-\tilde b$.   
We now apply our SMO method (Algorithm \ref{AL-alg}) to solve \eqref{Qlinear}.\footnote{Clearly, problem \eqref{Qlinear} is more sophisticated than \eqref{linear}. As shown in Table \ref{t-linear}, problem \eqref{linear} already poses significant challenges for FOP \cite{lu2024first-bilevel} when the dimension $n$ is relatively large. Therefore, we do not apply FOP to solve \eqref{Qlinear}.} Specifically, we choose $0$ as the initial point and set the parameters of SMO as $(\varepsilon,\epsilon_0,\tau)=(10^{-2},1,0.8)$. We terminate SMO once $\epsilon_{\bk} \leq 10^{-2}$ and 
$(x^{\bk},y^{\bk})$ satisfies
\begin{equation*}
\|[\tg(x^{\bk},y^{\bk})]_+\|\leq10^{-2},\quad \tf(x^{\bk},y^{\bk})-\tf^*(x^{\bk})\leq10^{-2}
\end{equation*}
for some $\bk$ and output $(x^{\bk},y^{\bk})$ as an approximate solution of \eqref{Qlinear}, where $\tf^*$ is defined in \eqref{tfstarx} and the value $\tf^*(x^{\bk})$ is computed by CVX \cite{grant2014cvx}.

The computational results of SMO for problem \eqref{Qlinear} with the instances randomly generated above are presented in Table \ref{t-Qlinear}. In detail, the values of $n$, $m$ and $l$ are listed in the first three columns. For each triple $(n, m, l)$, the average initial objective value $f(x^0,\hat y)$ with $\hat y$ being generated above and the average final objective value $f(x^{\bk},y^{\bk})$ over $10$ random instances are given in the rest of the columns. One can see that the approximate solution $(x^{\bk},y^{\bk})$ found by SMO significantly reduces the initial objective value.

\begin{table}[H]
\centering
\begin{tabular}{ccc||cc}
\hline
$n$ & $m$&$l$ &Initial objective value&Final objective value\\\hline
$100$&$100$&$5$&$-0.04$&$-95.70$\\
$200$&$200$&$10$&$0.03$&$-275.34$\\
$300$&$300$&$15$&$0.15$&$-487.64$\\
$400$&$400$&$20$&$0.20$&$-749.02$\\
$500$&$500$&$25$&$0.13$&$-1085.57$\\\hline
\end{tabular}
\caption{Numerical results for problem \eqref{Qlinear}}
\label{t-Qlinear}
\end{table}

\subsection{Hyperparameter tuning for support vector machine}

In this subsection, we consider a hyperparameter tuning model for support vector machine (SVM):
\begin{align}
\min_{c,w,b,\xi}\,\, & \frac{1}{m}\sum_{n<i\leq n+m}\ell\left(\hat y_i,w^T\hat x_i+b\right)+\cI_{[0,10]^n}(c)\label{prob-SVM}\\
\mbox{s.t.}\,\, & (w,b,\xi)\in\argmin_{\tilde w, \tilde b, \tilde\xi}\Big\{\sum_{1\leq i\leq n}\ell\left(\hat y_i,\tilde w^T\hat x_i+\tilde b\right)+c^T\tilde \xi+\cI_{[-1,1]^{q+1}}(\tilde w,\tilde b)+\cI_{[0,20]^n}(\tilde\xi) \Big|\nn \\
&\qquad\qquad\qquad\qquad\quad\,\,\hat y_i(\tilde w^T\hat x_i+\tilde b)\geq1-\tilde\xi_i,\;i=1,\dots,n\Big\},\nn
\end{align}
where $\{(\hat x_i,\hat y_i)\}_{1\leq i\leq n}$ is the training set, $\{(\hat x_i,\hat y_i)\}_{n< i\leq n+m}$ is the validation set, $c\in\bR^n$, $w\in\bR^q$, $b\in\bR$,  $\xi\in\bR^n$, $\ell(u,v)=\log(1+e^{-uv})$ is the binomial deviance loss function \cite{hastie2009elements}, and $\cI_{[0,10]^{n}}$, $\cI_{[-1,1]^{q+1}}$, and $\cI_{[0,20]^{n}}$ are the indicator functions of $[0,10]^n$, $[-1,1]^{q+1}$ and $[0,20]^{n}$, respectively. Specifically, at the lower level of \eqref{prob-SVM},  we train a linear SVM on the training set with a decision hyperplane of the form $\{x\in\bR^q:w^Tx+b=0\}$, where the binomial deviance is used as the loss function, and a slack variable $\xi$ and penalty parameter $c$ are introduced to handle non-separable datasets.\footnote{The vector $c$ is introduced to assign weights to individual data points in the weighted SVM formulation. This technique has been studied in the literature to capture the relative importance of data points in the training set (see \cite{shen2013cu, xanthopoulos2014weighted, yang2007weighted}).}
At the upper level of \eqref{prob-SVM}, we minimize the validation loss to select the hyperparameter $c$ and the corresponding $(w,b,\xi)$. Similar bilevel SVM models have been widely studied in the literature (e.g., \cite{bennett2008bilevel,okuno2021lp,ye2023difference}).

In our experiments, we solve problem \eqref{prob-SVM} on five datasets from the LIBSVM repository \cite{chang2011libsvm}. For each dataset, one-fourth of the samples are randomly selected as the validation set, and the remainder are used as the training set. Notice that \eqref{prob-SVM} is a special case of \eqref{prob} with $x=c$, $y=(w,b,\xi)$, $z=(\tilde w,\tilde b,\tilde\xi)$,
\begin{align*}
&f(x,y)=\frac{1}{m}\sum_{n<i\leq n+m}\ell\left(\hat y_i,w^T\hat x_i+b\right)+\cI_{[0,10]^n}(c),\\
&\tf(x,z)=\sum_{1\leq i \leq n}\ell\left(\hat y_i,w^T\hat x_i+b\right)+c^T\tilde \xi+\cI_{[-1,1]^{q+1}}(\tilde w,\tilde b)+\cI_{[0,20]^n}(\tilde\xi),\\
& \tg_i(x,z)=1-\tilde\xi_i-\hat y_i(\tilde w^T\hat x_i+\tilde b), i=1\dots,n.
\end{align*}
As a result, both SMO (Algorithm \ref{AL-alg}) and FOP (\cite[Algorithm 2]{lu2024first-bilevel}) are suitable for solving \eqref{prob-SVM}. We now apply both methods to solve \eqref{prob-SVM}, starting  from $x^0=0$ and $y^0$ with the entries independently drawn from the uniform distribution on $[0,1]$. In addition, we set $(\varepsilon,\epsilon_0,\tau)=(10^{-2},1,0.9)$ for SMO.  To enhance the efficiency of FOP, we adopt a dynamic updating scheme on its penalty and tolerance parameters. Specifically, we set $\rho_k=5^{k-1}$, $\varepsilon_k=\rho_k^{-1}$ for \cite[Algorithm 2]{lu2024first-bilevel}. For each $k >1$, let $(x^{k-1}, y^{k-1})$ be the output of \cite[Algorithm 2]{lu2024first-bilevel} with $(\varepsilon, \rho)=(\varepsilon_{k-1}, \rho_{k-1})$.  We run \cite[Algorithm 2]{lu2024first-bilevel} with $(\varepsilon, \rho)=(\varepsilon_k, \rho_k)$ and $(x^{k-1},\tilde y^{k-1})$ as the initial point to generate $(x^k,y^k)$, where $\tilde y^{k-1}\in\argmin_z\tf(x^{k-1},z)$ is found by CVX \cite{grant2014cvx}.
We terminate both algorithms once $\epsilon_{\bk} \leq 10^{-2}$ for SMO, $\varepsilon_{\bk}\leq10^{-2}$ for FOP, and 
$(x^{\bk},y^{\bk})$ satisfies
\begin{equation*}
\|[\tg(x^{\bk},y^{\bk})]_+\|\leq10^{-2},\quad \tf(x^{\bk},y^{\bk})-\tf^*(x^{\bk})\leq10^{-2}
\end{equation*}
for some $\bk$, and output $(x^{\bk},y^{\bk})$ as an approximate solution of \eqref{prob-SVM}, where $\tf^*$ is defined in \eqref{tfstarx} and the value $\tf^*(x^{\bk})$ is computed by CVX \cite{grant2014cvx}.

The computational results of SMO and FOP for problem \eqref{prob-SVM} are presented in Table \ref{t-SVM}. In detail, the names of the datasets are listed in the first column. For each dataset, the initial objective value $f(x^0,y_0^*)$, where $y_0^*$ is the lower-level optimal solution with $x=x^0$ computed by CVX \cite{grant2014cvx}, the final objective value $f(x^{\bk},y^{\bk})$, validation accuracy, and the CPU time (in seconds) are given in the rest of the columns. It can be observed that both SMO and FOP find approximate solutions with objective values substantially lower than the initial value, and the resulting support vector machine achieves good validation accuracy. Moreover, SMO produces an approximate solution with an objective value similar to that of FOP but significantly outperforms FOP in terms of CPU time.

\begin{table}[H]
\centering
\resizebox{0.95\linewidth}{!}{
\begin{tabular}{c||c||cc||cc||cc}
\hline
Dataset &Initial objective value&\multicolumn{2}{c||}{Final objective value}&\multicolumn{2}{c||}{Validation accuracy}&\multicolumn{2}{c}{CPU time (seconds)}\\
 & &SMO&FOP&SMO&FOP &SMO &FOP\\\hline
breast-cancer\_scale&$2.894$&$0.336$&$0.352$&$100.0\%$& $98.5\%$ &$175.8$&$1537.9$\\
heart\_scale&$1.359$&$0.556$&$0.551$&$75.9\%$ &$72.4\%$ &$402.6$&$3335.5$\\
ionosphere\_scale&$1.063$&$0.443$&$0.447$& $88.9\%$& $86.7\%$&$792.5$&$7520.7$\\
german.numer\_scale&$1.609$&$0.544$&$0.562$&$87.7\%$ &$84.2\%$ &$278.6$&$2453.0$\\
australian\_scale&$2.565$&$0.654$&$0.678$&$72.5\%$ &$72.5\%$ &$224.6$&$2138.0$\\ \hline
\end{tabular}
}
\caption{Numerical results for problem \eqref{prob-SVM}}
\label{t-SVM}
\end{table}

\section{Proof of main results }\label{sec:proof}
In this section,  we provide a proof of our main results presented in Subsection \ref{sec:thm}, which are particularly Theorems \ref{complexity} and \ref{complexity-str}. 

It should be noted that while the first-order penalty method (FOP) (\cite[Algorithm 4]{lu2024first-bilevel}) and the SMO method (Algorithm \ref{AL-alg}) both require solving minimax subproblems, they differ substantially in several aspects: (i) FOP solves a single minimax subproblem with fixed penalty parameters and without a warm-start strategy, whereas SMO solves a sequence of minimax subproblems with dynamically updated penalty parameters and a tailored warm-start strategy; (ii) The minimax subproblem in FOP arises from a quadratic penalty scheme applied to the lower-level problem, while in SMO it is derived from a modified augmented Lagrangian scheme; (iii) FOP is designed for problems with a merely convex lower-level objective and cannot exploit strong convexity,  whereas SMO can leverage it to achieve stronger complexity results. As a result, the convergence proofs for FOP and SMO are fundamentally different. Moreover, due to SMO’s more intricate structure, establishing its convergence is significantly more challenging.

To proceed, one can observe from \eqref{tLag} and \eqref{tfstarx} that
\beq\label{p-ineq}
\min_z\tcL(x,z,\lambda;\rho,\mu) \leq \tf^*(x) +\frac{\|\lambda\|^2}{2\rho\mu}\qquad\forall x\in\mcX, \lambda\in\bR_+^l, \rho,  \mu>0,
\eeq
which will be frequently used later. 

We now introduce several technical lemmas that will be utilized to prove Theorems \ref{complexity} and \ref{complexity-str} subsequently. The following lemma presents several properties of the lower-level problem of \eqref{prob}, whose proof can be found in \cite{lu2024first-bilevel}.

\begin{lemma}[{\bf \cite[Lemma 3]{lu2024first-bilevel}}] \label{dual-bnd}
Suppose that Assumptions~\ref{a1} and \ref{a2} hold. Let $\tf^*$, $\tf^*_{\rm hi}$, $D_\bfy$,  $L_\tf$ and $G$ be given in \eqref{tfstarx}, \eqref{def-tFx}, \eqref{DxDy}, and Assumptions~\ref{a1} and \ref{a2}, respectively. Then the following statements hold.
\begin{enumerate}[label=(\roman*)]
\item $\lambda^*\geq0$ and $\|\lambda^*\|\leq G^{-1}L_{\tf} D_\bfy$ for all $\lambda^*\in\Lambda^*(x)$ and $x\in\mcX$, where $\Lambda^*(x)$ denotes the set of optimal Lagrangian multipliers of problem 
\eqref{tfstarx} for any $x\in\mcX$.
\item The function $\tf^*$ is Lipschitz continuous on $\mcX$ and $\tf^*_{\rm hi}$ is finite.
\item It holds that
\[
\tf^*(x)=\max_{\lambda}\min_{z}\tf(x,z)+\langle\lambda,\tg(x,z)\rangle-\cI_{\bR_+^l}(\lambda) \qquad \forall x\in\mcX,
\]
where $\cI_{\bR_+^l}(\cdot)$ is the indicator function associated with $\bR_+^l$.
\end{enumerate}
\end{lemma}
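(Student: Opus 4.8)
The plan is to prove the three statements in the order (iii), (i), (ii), since each relies on its predecessors, and the common engine is convex duality for the lower-level problem. For every fixed $x\in\mcX$, problem \eqref{tfstarx} minimizes the convex function $\tf(x,\cdot)=\tf_1(x,\cdot)+\tf_2$ (convex by Assumption \ref{a1}(i) with $\sigma\geq0$ together with convexity of $\tf_2$) over the convex feasible set $\{z:\tg(x,z)\leq0\}$, with the abstract constraint $z\in\mcY=\dom\,\tf_2$ absorbed into $\tf_2$. For part (iii) I would invoke the strong duality theorem for convex programs: Assumption \ref{a2} furnishes a Slater point $\hat z_x$ with $\tg_i(x,\hat z_x)\leq -G<0$, which guarantees zero duality gap and attainment of the dual optimum. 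This is precisely the saddle-point identity asserted in (iii) and, as a byproduct, shows $\Lambda^*(x)\neq\emptyset$, so that (i) is non-vacuous.

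For part (i), the dual-feasibility part of the optimality conditions gives $\lambda^*\geq0$ for every $\lambda^*\in\Lambda^*(x)$. To bound $\|\lambda^*\|$, I would use the zero-gap identity $\tf^*(x)=\min_z\{\tf(x,z)+\langle\lambda^*,\tg(x,z)\rangle\}$ from (iii), evaluate the right-hand side at the Slater point $\hat z_x$, and combine $\lambda^*\geq0$ with $\tg_i(x,\hat z_x)\leq-G$ to obtain $\tf^*(x)\leq \tf(x,\hat z_x)-G\|\lambda^*\|_1$. Rearranging and estimating $\tf(x,\hat z_x)-\tf^*(x)=\tf(x,\hat z_x)-\tf(x,z^*)\leq L_\tf\|\hat z_x-z^*\|\leq L_\tf D_\bfy$, where $z^*$ is a minimizer of \eqref{tfstarx} (which exists since the feasible set is a nonempty compact subset of $\mcY$ and $\tf(x,\cdot)$ is lower semicontinuous), yields $G\|\lambda^*\|_1\leq L_\tf D_\bfy$, and hence $\|\lambda^*\|\leq\|\lambda^*\|_1\leq G^{-1}L_\tf D_\bfy$.

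For part (ii), finiteness of $\tf^*_{\rm hi}$ is immediate from $\tf^*(x)\leq \tf(x,\hat z_x)\leq\max_{(x,z)\in\mcX\times\mcY}\tf(x,z)<\infty$, the last quantity being finite by continuity of $\tf$ on the compact set $\mcX\times\mcY$. For Lipschitz continuity of $\tf^*$ I would deliberately avoid comparing primal minimizers at two points $x,x'$, since a minimizer at $x'$ need not even be feasible at $x$; instead I would exploit the dual representation. By (i) the outer maximum in (iii) is attained inside the fixed bounded set $B:=\{\lambda\in\bR^l_+:\|\lambda\|\leq G^{-1}L_\tf D_\bfy\}$, so $\tf^*(x)=\max_{\lambda\in B}\min_z\{\tf(x,z)+\langle\lambda,\tg(x,z)\rangle\}$. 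For each fixed $(\lambda,z)\in B\times\mcY$, the map $x\mapsto\tf(x,z)+\langle\lambda,\tg(x,z)\rangle$ is Lipschitz on $\mcX$ with modulus $L_\tf+G^{-1}L_\tf D_\bfy L_\tg$, uniformly over $B\times\mcY$, by Assumption \ref{a1}(i),(iii) and Cauchy--Schwarz. Since a pointwise infimum over $z$ followed by a supremum over $\lambda$ preserves a common Lipschitz modulus, $\tf^*$ inherits this constant on $\mcX$.

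The main obstacle is part (ii): as a value function, $\tf^*$ is an infimum over a feasible region that moves with $x$, so its Lipschitz continuity resists any direct argument. The decisive device is that parts (iii) and (i) together recast $\tf^*$ as a max-min over a \emph{fixed} compact multiplier set $B$, after which uniform Lipschitzness of the integrand transfers cleanly through both the inner minimization and the outer maximization. Thus the genuine work is concentrated in the strong-duality step (iii) and the uniform multiplier bound (i)---in particular in justifying strong duality in the presence of the abstract constraint $z\in\dom\,\tf_2$, which requires the Slater point to lie in the relative interior of $\dom\,\tf_2$---while (ii) then follows with little additional effort.
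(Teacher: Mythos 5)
Your proposal is essentially correct, and since the paper does not reprove this lemma---it is imported verbatim from \cite[Lemma 3]{lu2024first-bilevel}---the right benchmark is the standard duality argument, which is exactly what you give: Slater-based strong duality with dual attainment for (iii); evaluation of the zero-gap identity $\tf^*(x)=\min_z\{\tf(x,z)+\langle\lambda^*,\tg(x,z)\rangle\}$ at the Slater point, combined with $\tf(x,\hat z_x)-\tf^*(x)\leq L_\tf\|\hat z_x-z^*\|\leq L_\tf D_\bfy$, to get $G\|\lambda^*\|_1\leq L_\tf D_\bfy$ in (i); and the rewriting of $\tf^*$ as a max--min over the \emph{fixed} compact multiplier set $B=\{\lambda\in\bR^l_+:\|\lambda\|\leq G^{-1}L_\tf D_\bfy\}$, so that the uniform Lipschitz modulus $L_\tf+G^{-1}L_\tf D_\bfy L_\tg$ in $x$ passes through the inner $\min_z$ and outer $\max_\lambda$, for (ii). Your observation that the moving feasible set makes a direct primal comparison hopeless, and that the fixed dual set $B$ is the decisive device, is exactly the right structural point.

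One correction: your closing claim that strong duality here ``requires the Slater point to lie in the relative interior of $\dom\,\tf_2$'' is both unnecessary and unavailable---Assumption \ref{a2} only places $\hat z_x\in\mcY$, so if that condition were genuinely needed your proof of (iii) would have a gap under the stated hypotheses. It is not needed, because the constraint functions $\tg_i(x,\cdot)$ are real-valued and continuous on all of $\bR^m$: the perturbation function $v(u)=\inf\{\tf(x,z)\,|\,\tg(x,z)\leq u\}$ is convex, finite at $0$, never $-\infty$ (as $\tf(x,\cdot)\geq\tf_{\rm low}$ on its domain), and bounded above by $\tf(x,\hat z_x)$ for all $\|u\|_\infty\leq G$, since $\tg(x,\hat z_x)\leq -G\mathbf{1}\leq u$ there. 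Hence $v$ is continuous at $0$, $\partial v(0)\neq\emptyset$, and any $\lambda^*=-s$ with $s\in\partial v(0)$ is a nonnegative dual-optimal multiplier with zero duality gap. So part (iii) follows from the stated assumptions exactly as written, no relative-interior refinement enters, and the rest of your argument goes through unchanged.
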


The next lemma provides an upper bound on $\|\lambda^k\|$ for all $0\leq k\in\bbK-1$.

\begin{lemma}\label{l-ycnstr}
Suppose that Assumption \ref{a1} holds. Let $\bbK$ and $\vartheta$ be defined in  \eqref{def-K} and \eqref{ht}, $\mu_k$ and $\rho_k$ be given in Algorithm \ref{AL-alg}, and $\{\lambda^k\}_{k\in\bbK}$ be generated by Algorithm \ref{AL-alg}. Then we have
\beq\label{ly-cnstr}
\|\lambda^k\|^2\leq 2\rho_k\mu_k\vartheta \qquad \forall 0\leq k\in\bbK-1. 
\eeq
\end{lemma}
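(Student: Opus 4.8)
The plan is to prove \eqref{ly-cnstr} by deriving a one-step recursion for the normalized multiplier quantity $a_k:=\|\lambda^k\|^2/(2\rho_k\mu_k)$ and showing $a_k\le\vartheta$ for every $0\le k\in\bbK-1$. The starting point is the exact identity produced by the multiplier update $\lambda^{k+1}=[\lambda^k+\mu_k\tg(x^{k+1},z^{k+1})]_+$: comparing it with the definition \eqref{tLag} of $\tcL$ gives
\[
\frac{1}{2\rho_k\mu_k}\|\lambda^{k+1}\|^2=\tcL(x^{k+1},z^{k+1},\lambda^k;\rho_k,\mu_k)-\tf(x^{k+1},z^{k+1}).
\]
Thus the whole task reduces to bounding $\tcL(x^{k+1},z^{k+1},\lambda^k;\rho_k,\mu_k)$ from above.

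To do this I would exploit that $(x^{k+1},y^{k+1},z^{k+1})$ is an $\epsilon_k$-primal-dual stationary point of \eqref{AL-sub}. From \eqref{Lag}, the $z$-dependence of $\mcL$ enters only through the term $-\rho_k\tcL(x^{k+1},\cdot,\lambda^k;\rho_k,\mu_k)$, so $\partial_z\mcL=-\rho_k\partial_z\tcL$ and hence $\dist(0,\partial_z\tcL(x^{k+1},z^{k+1},\lambda^k;\rho_k,\mu_k))\le\epsilon_k/\rho_k$. Since $\tcL(x^{k+1},\cdot,\lambda^k;\rho_k,\mu_k)$ is convex on $\mcY$ by Assumption \ref{a1} with a minimizer $z^\star\in\mcY$, and $z^{k+1}\in\mcY$ (its update uses the proximal operator of $\rho_k\tf_2$, whose domain is $\mcY$), convexity together with $\|z^{k+1}-z^\star\|\le D_\bfy$ yields the gap estimate $\tcL(x^{k+1},z^{k+1},\lambda^k;\rho_k,\mu_k)-\min_z\tcL(x^{k+1},z,\lambda^k;\rho_k,\mu_k)\le(\epsilon_k/\rho_k)D_\bfy=\epsilon_k^2D_\bfy\le\epsilon_kD_\bfy$. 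Inserting \eqref{p-ineq} to control the minimum by $\tf^*(x^{k+1})+\|\lambda^k\|^2/(2\rho_k\mu_k)$, and using $\tf^*(x^{k+1})\le\tf^*_{\rm hi}$ and $\tf(x^{k+1},z^{k+1})\ge\tf_{\rm low}$, gives
\[
\frac{\|\lambda^{k+1}\|^2}{2\rho_k\mu_k}\le a_k+(\tf^*_{\rm hi}-\tf_{\rm low})+\epsilon_kD_\bfy.
\]

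The final step is the arithmetic that matches $\vartheta$. Because $\rho_k\mu_k=\epsilon_k^{-4}=\epsilon_0^{-4}\tau^{-4k}$, we have $\rho_{k+1}\mu_{k+1}=\tau^{-4}\rho_k\mu_k$, so multiplying the previous display by $\rho_k\mu_k/(\rho_{k+1}\mu_{k+1})=\tau^4$ turns it into $a_{k+1}\le\tau^4\big(a_k+(\tf^*_{\rm hi}-\tf_{\rm low})+\epsilon_0\tau^kD_\bfy\big)$. Unrolling from $a_0$ (with $a_0=\epsilon_0^4\|\lambda^0\|^2/2\le\tfrac12\|\lambda^0\|^2$, valid since $\epsilon_0\le1$), the three resulting geometric series — $\tau^{4k}a_0$, $\sum\tau^{4i}(\tf^*_{\rm hi}-\tf_{\rm low})$, and $\epsilon_0D_\bfy\tau^{4k}\sum\tau^{-3j}$ — reproduce exactly the three terms $\tfrac12\|\lambda^0\|^2$, $(\tf^*_{\rm hi}-\tf_{\rm low})/(1-\tau^4)$, and $D_\bfy\epsilon_0/(1-\tau^3)$ that define $\vartheta$ in \eqref{ht}; the exponent $\tau^3$ arises precisely from the interplay between the $\tau^4$ decay of $\rho_k\mu_k$ and the $\tau^k$ decay of $\epsilon_k$. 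This yields $a_k\le\vartheta$, i.e. \eqref{ly-cnstr}. (Equivalently, one can argue by induction, since $a_k\le\vartheta$ forces $a_{k+1}\le\tau^4\vartheta+\tau^4(\tf^*_{\rm hi}-\tf_{\rm low})+\tau^4\epsilon_kD_\bfy\le\vartheta$ by the defining inequalities of $\vartheta$.)

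I expect the main obstacle to be the gap estimate in the second paragraph: converting the first-order, minimax-type $\epsilon_k$-stationarity of \eqref{AL-sub} into a genuine \emph{objective-value} suboptimality for the inner minimization $\min_z\tcL(x^{k+1},\cdot,\lambda^k;\rho_k,\mu_k)$. This hinges on (i) the structural observation that $\mcL$ sees $z$ only through $-\rho_k\tcL$, so that $z$-stationarity transfers to $\tcL$ with the favorable factor $1/\rho_k$, and (ii) convexity of $\tcL(x,\cdot)$ plus compactness of $\mcY$ to pass from a small subgradient to a small function gap, for which one must also confirm $z^{k+1}\in\mcY$. Once this estimate is secured, the identity and the geometric summation are routine.
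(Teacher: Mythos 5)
Your proposal is correct and follows essentially the same route as the paper's proof: both convert the $\epsilon_k$-stationarity in $z$ (via $\partial_z\mcL=-\rho_k\partial_z\tcL$) into an $(\epsilon_k/\rho_k)D_\bfy$ objective gap by convexity and compactness, combine it with \eqref{p-ineq} and the update identity $\|\lambda^{k+1}\|^2=\|[\lambda^k+\mu_k\tg(x^{k+1},z^{k+1})]_+\|^2$, and close with the same $\tau^4$/$\tau^3$ geometric sums matching \eqref{ht}. The only difference is presentational — you normalize to $a_k=\|\lambda^k\|^2/(2\rho_k\mu_k)$ and induct, whereas the paper telescopes the raw increments $\|\lambda^{t+1}\|^2-\|\lambda^t\|^2$ and divides by $\rho_k\mu_k$ at the end.
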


\begin{proof}
One can observe from \eqref{def-tFx}, \eqref{tfhi} and Algorithm \ref{AL-alg} that $\tf^*_{\rm hi} \geq \tf_{\rm low}$ and $\mu_0\geq\rho_0\geq1>\tau>0$, which together with \eqref{ht} imply that \eqref{ly-cnstr} holds for $k=0$. It remains to show that \eqref{ly-cnstr} holds for all $1\leq k\in\bbK-1$.

Since $(x^{t+1},y^{t+1},z^{t+1})$ is an $\epsilon_t$-primal-dual stationary point of \eqref{AL-sub} for all $0\leq t\in\bbK-1$, it follows from Definition \ref{def1} that there exists some $u\in\partial_z\mcL(x^{t+1},y^{t+1},z^{t+1},\lambda^t;\rho_t,\mu_t)$ with $\|u\|\leq\epsilon_t$. Notice from \eqref{Lag} and \eqref{tLag} that $\partial_z\mcL(x^{t+1},y^{t+1},z^{t+1},\lambda^t;\rho_t,\mu_t)=-\rho_t\partial_z\tcL(x^{t+1},z^{t+1},\lambda^t;\rho_t,\mu_t)$. Hence, $-\rho_t^{-1}u\in\partial_z\tcL(x^{t+1},z^{t+1},\lambda^t;\rho_t,\mu_t)$. Also, observe from \eqref{tLag} and Assumption \ref{a1} that $\tcL(x^{t+1},\cdot,\lambda^t;\rho_t,\mu_t)$ is convex. Using this, \eqref{DxDy}, $-\rho_t^{-1}u\in\partial_z\tcL(x^{t+1},z^{t+1},\lambda^t;\rho_t,\mu_t)$ and $\|u\|\leq\epsilon_t$, we obtain
\begin{align*}
\tcL(x^{t+1},z,\lambda^t;\rho_t,\mu_t)\geq&\ \tcL(x^{t+1},z^{t+1},\lambda^t;\rho_t,\mu_t)+\langle-\rho_t^{-1}u,z-z^{t+1}\rangle.\\
\geq&\ \tcL(x^{t+1},z^{t+1},\lambda^t;\rho_t,\mu_t)-\rho_t^{-1}D_\bfy\epsilon_t\qquad \forall z\in\mcY,
\end{align*}
which implies that
\beq\label{y-gap}
\min_z\tcL(x^{t+1},z,\lambda^t;\rho_t,\mu_t)\geq\tcL(x^{t+1},z^{t+1},\lambda^t;\rho_t,\mu_t)-\rho_t^{-1}D_\bfy\epsilon_t.
\eeq
By this, \eqref{tLag} and \eqref{p-ineq}, one has
\begin{align*}
\tf^*(x^{t+1}) & \overset{\eqref{p-ineq}}{\geq}\min_z\tcL(x^{t+1},z,\lambda^t;\rho_t,\mu_t)-\frac{\|\lambda^t\|^2}{2\rho_t\mu_t}\\
&\overset{\eqref{tLag}\eqref{y-gap}}{\geq} \tf(x^{t+1},z^{t+1})+\frac{1}{2\rho_t\mu_t}\left(\|[\lambda^t+\mu_t\tg(x^{t+1},z^{t+1})]_+\|^2-\|\lambda^t\|^2\right)-\rho_t^{-1}D_\bfy\epsilon_t\\
& = \tf(x^{t+1},z^{t+1})+\frac{1}{2\rho_t\mu_t}\left(\|\lambda^{t+1}\|^2-\|\lambda^t\|^2\right)-\rho_t^{-1}D_\bfy\epsilon_t,
\end{align*}
where the equality follows from the relation $\lambda^{t+1}=[\lambda^t+\mu_t \tg(x^{t+1},z^{t+1})]_+$ (see Algorithm \ref{AL-alg}). Using this inequality, \eqref{def-tFx},  \eqref{tfhi} and $\epsilon_t\leq\epsilon_0$ (see Algorithm \ref{AL-alg}), we have
\begin{equation*}
\|\lambda^{t+1}\|^2-\|\lambda^t\|^2\leq2\rho_t\mu_t(\tf^*(x^{t+1})-\tf(x^{t+1},y^{t+1}))+2\mu_tD_\bfy\epsilon_t\leq2\rho_t\mu_t(\tf^*_{\rm hi}-\tf_{\rm low})+2\mu_tD_\bfy\epsilon_0.
\end{equation*}
Summing up this inequality for $t=0,\dots,k-1$ with $1 \leq  k\in\bbK-1$ yields
\beq\label{l1-sum}
\|\lambda^{k}\|^2\leq \|\lambda^0\|^2+2(\tf^*_{\rm hi}-\tf_{\rm low})\sum_{t=0}^{k-1}\rho_t\mu_t+2D_\bfy\epsilon_0\sum_{t=0}^{k-1}\mu_t.
\eeq
Recall from Algorithm \ref{AL-alg} that $\epsilon_t=\epsilon_0\tau^t$, $\mu_t=\epsilon_t^{-3}$ and $\rho_t=\epsilon_t^{-1}$. It is not hard to verify that $\sum_{t=0}^{k-1}\rho_t\mu_t\leq\rho_{k-1}\mu_{k-1}/(1-\tau^4)$ and $\sum_{t=0}^{k-1}\mu_t\leq\mu_{k-1}/(1-\tau^3)$. Using these, \eqref{l1-sum}, $\rho_k>\rho_{k-1}\geq1$ and $\mu_{k}>\mu_{k-1}\geq1$ (see Algorithm \ref{AL-alg}), we obtain that for all $1 \leq k\in\bbK-1$,
\begin{align*}
\rho_k^{-1}\mu_k^{-1}\|\lambda^k\|^2&\leq\rho_k^{-1}\mu_k^{-1}\left( \|\lambda^0\|^2+\frac{2\rho_{k-1}\mu_{k-1}(\tf^*_{\rm hi}-\tf_{\rm low})}{1-\tau^4}+\frac{2\mu_{k-1}D_\bfy\epsilon_0}{1-\tau^3}\right)\\
&\leq\|\lambda^0\|^2+\frac{2(\tf^*_{\rm hi}-\tf_{\rm low})}{1-\tau^4}+\frac{2D_\bfy\epsilon_0}{1-\tau^3}\overset{\eqref{ht}}{=}2\vartheta.
\end{align*}
It implies that the conclusion of this lemma holds.
\end{proof}

The following lemma provides an upper bound on $\|[\tg(x^{k+1},z^{k+1})]_+\|$ and $\|[\tg(x^{k+1},y^{k+1})]_+\|$.

\begin{lemma}
Suppose that Assumptions \ref{a1} and \ref{a2} hold. Let $D_\bfy$, $\bbK$ and $\vartheta$ be defined in \eqref{DxDy}, \eqref{def-K} and \eqref{ht}, $L_f$, $L_\tf$ and $G$ be given in Assumptions \ref{a1} and \ref{a2}, and $\epsilon_0$, $\rho_k$ and $\mu_k$ be given in Algorithm \ref{AL-alg}. Suppose that $(x^{k+1},y^{k+1},z^{k+1}, \lambda^{k+1})$ is generated by Algorithm \ref{AL-alg} for some $0\leq k\in\bbK-1$ with 
\beq\label{muk-bnd}
\rho_k^{-1}\mu_k\geq8G^{-2}\vartheta.
\eeq
Then we have
\begin{align}
&\|[\tg(x^{k+1},z^{k+1})]_+\|\leq\mu_k^{-1}\|\lambda^{k+1}\|\leq2 \mu_k^{-1}G^{-1}(\epsilon_0+\rho_kL_\tf)D_\bfy,\label{y-cnstr}\\
&\|[\tg(x^{k+1},y^{k+1})]_+\|\leq\mu_k^{-1}\|[\lambda^k+\mu_k\tg(x^{k+1},y^{k+1})]_+\|\leq2 \mu_k^{-1}G^{-1}(\epsilon_0+L_f+\rho_kL_\tf)D_\bfy.\label{x-cnstr}
\end{align}
\end{lemma}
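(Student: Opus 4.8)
The plan is to prove the two displayed chains \eqref{y-cnstr} and \eqref{x-cnstr} separately. In each, the left inequality is an elementary coordinatewise estimate, while the right inequality—the bound on the augmented-Lagrangian multiplier—is the crux and is where the Slater condition (Assumption \ref{a2}) enters. First I would dispose of the left inequalities. Since $\lambda^0\in\bR_+^l$ and the update preserves nonnegativity, $\lambda^k\geq0$. For each coordinate $i$ I would check directly that $\mu_k[\tg_i(x^{k+1},z^{k+1})]_+\leq[\lambda^k_i+\mu_k\tg_i(x^{k+1},z^{k+1})]_+=\lambda^{k+1}_i$: when $\tg_i\geq0$ this follows from $\lambda^k_i\geq0$ and monotonicity of $[\cdot]_+$, and when $\tg_i<0$ the left side is $0$. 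Taking Euclidean norms of these nonnegative coordinate bounds gives $\mu_k\|[\tg(x^{k+1},z^{k+1})]_+\|\leq\|\lambda^{k+1}\|$, and the same computation with $y^{k+1}$ in place of $z^{k+1}$ yields the left inequality in \eqref{x-cnstr}.

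The main work is bounding $\|\lambda^{k+1}\|$. Here I would use the Slater point $\hat z:=\hat z_{x^{k+1}}\in\mcY$, for which $\tg_i(x^{k+1},\hat z)\leq-G$. On the one hand, since $\lambda^{k+1}\geq0$, the Slater inequality gives the key upper bound $\langle\lambda^{k+1},\tg(x^{k+1},\hat z)\rangle\leq-G\|\lambda^{k+1}\|_1\leq-G\|\lambda^{k+1}\|$, linear in $\|\lambda^{k+1}\|$. On the other hand, convexity of each $\tg_i(x^{k+1},\cdot)$ yields the lower bound $\langle\lambda^{k+1},\tg(x^{k+1},\hat z)\rangle\geq\langle\lambda^{k+1},\tg(x^{k+1},z^{k+1})\rangle+\langle\nabla_z\tg(x^{k+1},z^{k+1})\lambda^{k+1},\hat z-z^{k+1}\rangle$. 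I would then estimate the two right-hand terms. For the first, the coordinatewise identity $\lambda^{k+1}_i\tg_i(x^{k+1},z^{k+1})=\mu_k^{-1}\lambda^{k+1}_i(\lambda^{k+1}_i-\lambda^k_i)$ (which holds whether or not $\lambda^{k+1}_i$ vanishes) gives $\langle\lambda^{k+1},\tg(x^{k+1},z^{k+1})\rangle=\mu_k^{-1}(\|\lambda^{k+1}\|^2-\langle\lambda^{k+1},\lambda^k\rangle)\geq\mu_k^{-1}(\|\lambda^{k+1}\|^2-\|\lambda^{k+1}\|\,\|\lambda^k\|)$. For the second, I would invoke the $\epsilon_k$-stationarity in the dual block from Definition \ref{def1}: there is $u$ with $\|u\|\leq\epsilon_k$ and $-\rho_k^{-1}u\in\partial_z\tcL(x^{k+1},z^{k+1},\lambda^k;\rho_k,\mu_k)$, which, after computing $\partial_z\tcL$, lets me substitute $\nabla_z\tg(x^{k+1},z^{k+1})\lambda^{k+1}=-u-\rho_k\nabla_z\tf_1(x^{k+1},z^{k+1})-\rho_k w$ for some $w\in\partial\tf_2(z^{k+1})$; bounding the resulting inner products via $\|u\|\leq\epsilon_k$, $\|\hat z-z^{k+1}\|\leq D_\bfy$, and convexity of $\tf(x^{k+1},\cdot)$ gives $\langle\nabla_z\tg(x^{k+1},z^{k+1})\lambda^{k+1},\hat z-z^{k+1}\rangle\geq-(\epsilon_k+\rho_kL_\tf)D_\bfy$.

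Chaining the upper and lower bounds and discarding the nonnegative term $\mu_k^{-1}\|\lambda^{k+1}\|^2$, I would obtain $G\|\lambda^{k+1}\|\leq\mu_k^{-1}\|\lambda^{k+1}\|\,\|\lambda^k\|+(\epsilon_k+\rho_kL_\tf)D_\bfy$. The final step is to absorb the first right-hand term: Lemma \ref{l-ycnstr} gives $\|\lambda^k\|\leq\sqrt{2\rho_k\mu_k\vartheta}$, hence $\mu_k^{-1}\|\lambda^k\|\leq\sqrt{2\vartheta\rho_k/\mu_k}$, and the standing hypothesis \eqref{muk-bnd}, i.e. $\rho_k^{-1}\mu_k\geq8G^{-2}\vartheta$, forces $\mu_k^{-1}\|\lambda^k\|\leq G/2$. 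Using $\epsilon_k\leq\epsilon_0$ this yields $\tfrac{G}{2}\|\lambda^{k+1}\|\leq(\epsilon_0+\rho_kL_\tf)D_\bfy$, which is exactly the right inequality in \eqref{y-cnstr}. For \eqref{x-cnstr} I would run the identical argument with $\lambda_\bfy^{k+1}:=[\lambda^k+\mu_k\tg(x^{k+1},y^{k+1})]_+$, replacing the $z$-stationarity by the $(x,y)$-stationarity of the primal-dual point (whose $y$-block has norm at most $\epsilon_k$); the only new ingredient is the extra term $\langle\nabla_yf_1(x^{k+1},y^{k+1}),\hat z-y^{k+1}\rangle$, bounded in modulus by $L_fD_\bfy$ since $\|\nabla_yf_1\|\leq L_f$, and this accounts precisely for the additional $L_f$ in the bound.

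I expect the final absorption to be the main obstacle, not because it is long but because it is where the constant in the parameter condition \eqref{muk-bnd} must be matched exactly to $G/2$; everything upstream is a disciplined bookkeeping of Slater, convexity, and the approximate optimality residuals. A secondary subtlety is extracting $\partial_z\tcL$ and the $y$-block of $\partial_{(x,y)}\mcL$ correctly so that the product $\nabla\tg\cdot(\text{multiplier})$ can be cleanly replaced by the stationarity residual, since this substitution is what converts the otherwise intractable gradient term into quantities controlled by $\epsilon_k$, $L_f$, and $\rho_kL_\tf$.
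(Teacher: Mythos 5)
Your proof is correct and follows essentially the same route as the paper's: the Slater point $\hat z_{x^{k+1}}$, convexity of $\tf(x^{k+1},\cdot)$ and $\tg_i(x^{k+1},\cdot)$, the $z$-block (resp.\ $y$-block) stationarity residual of the $\epsilon_k$-primal-dual point, the multiplier bound $\|\lambda^k\|\leq\sqrt{2\rho_k\mu_k\vartheta}$ from Lemma \ref{l-ycnstr}, and absorption of $\mu_k^{-1}\|\lambda^k\|\leq G/2$ via \eqref{muk-bnd}. Your only departures are cosmetic: you use the exact identity $\langle\lambda^{k+1},\tg(x^{k+1},z^{k+1})\rangle=\mu_k^{-1}\left(\|\lambda^{k+1}\|^2-\langle\lambda^{k+1},\lambda^k\rangle\right)$ where the paper keeps only the one-sided inequality \eqref{complim-ineq}, and your bound $-(\epsilon_k+\rho_kL_\tf)D_\bfy$ implicitly uses the $L_\tf$-Lipschitz continuity of $\tf$ (not convexity alone) to control $\tf(x^{k+1},\hat z)-\tf(x^{k+1},z^{k+1})$ by $L_\tf D_\bfy$, exactly as in the paper's chain \eqref{bnd-ineq}.
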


\begin{proof}
Suppose that $(x^{k+1},y^{k+1},z^{k+1}, \lambda^{k+1})$ is generated by Algorithm \ref{AL-alg} for some $0\leq k\in\bbK-1$ satisfying \eqref{muk-bnd}.  Notice that $(x^{k+1}, y^{k+1},z^{k+1})$ is an $\epsilon_k$-primal-dual stationary point of \eqref{AL-sub}. It then follows from \eqref{Lag}, Definition \ref{def1} and Assumption \ref{a1} that
\begin{align}
&\dist\big(0,\nabla_yf(x^{k+1},y^{k+1})+\rho_k\partial_y\tf(x^{k+1},y^{k+1})+\nabla_y\tg(x^{k+1},y^{k+1})[\lambda^k+\mu_k\tg(x^{k+1},y^{k+1})]_+\big)\leq\epsilon_k, \label{x-stat}\\
&\dist\big(0,-\rho_k\partial_z\tf(x^{k+1},z^{k+1})-\nabla_z \tg(x^{k+1},z^{k+1})[\lambda^k+\mu_k\tg(x^{k+1},z^{k+1})]_+\big)\leq\epsilon_k.\label{y-stat}
\end{align}

We first show that \eqref{y-cnstr} holds. Notice from Algorithm \ref{AL-alg} that  $\lambda^{k+1}=[\lambda^k+\mu_k\tg(x^{k+1},z^{k+1})]_+$. Hence, it follows from \eqref{y-stat} that there exists some $u\in\partial_z\tf(x^{k+1},z^{k+1})$ such that
\beq\label{y-res}
\|\rho_ku+\nabla_z \tg(x^{k+1},z^{k+1})\lambda^{k+1}\|\leq\epsilon_k.
\eeq
By Assumption \ref{a2}, there exists some $\hat z^{k+1}\in\mcY$ such that $-\tg_i(x^{k+1},\hat z^{k+1})\geq G$ for all $i$. Observe that $\langle\lambda^{k+1},\lambda^k+\mu_k\tg(x^{k+1},z^{k+1})\rangle=\|[\lambda^k+\mu_k\tg(x^{k+1},z^{k+1})]_+\|^2\geq0$, which implies that 
\beq \label{complim-ineq}
-\langle\lambda^{k+1},\mu_k^{-1}\lambda^k\rangle \leq \langle\lambda^{k+1},\tg(x^{k+1},z^{k+1})\rangle.
\eeq
 Using these, \eqref{y-res}, $\lambda^{k+1}\geq 0$ and $u\in\partial_z\tf(x^{k+1},z^{k+1})$, we have
\begin{align}
&\ \rho_k\tf(x^{k+1},z^{k+1})-\rho_k\tf(x^{k+1},\hat z^{k+1})+G\|\lambda^{k+1}\|_1-\langle\lambda^{k+1},\mu_k^{-1}\lambda^k\rangle\nn\\
&\ \leq \rho_k\tf(x^{k+1},z^{k+1})-\rho_k\tf(x^{k+1},\hat z^{k+1})+\langle\lambda^{k+1},-\tg(x^{k+1},\hat z^{k+1})-\mu_k^{-1}\lambda^k\rangle\nn \\
& \overset{\eqref{complim-ineq}}\leq \rho_k\tf(x^{k+1},z^{k+1})-\rho_k\tf(x^{k+1},\hat z^{k+1})+\langle\lambda^{k+1},\tg(x^{k+1},z^{k+1})-\tg(x^{k+1},\hat z^{k+1}))\rangle\nn \\
&\ \leq \langle \rho_ku, z^{k+1}-\hat z^{k+1}\rangle+\langle\nabla_z\tg(x^{k+1},z^{k+1})\lambda^{k+1},z^{k+1}-\hat z^{k+1}\rangle\nn \\
&\ = \langle \rho_ku+\nabla_z \tg(x^{k+1},z^{k+1})\lambda^{k+1},z^{k+1}-\hat z^{k+1}\rangle\leq D_\bfy\epsilon_k, \label{bound-ineq}
\end{align}
where the first inequality is due to $\lambda^{k+1}\geq 0$ and $-\tg_i(x^{k+1},\hat z^{k+1})\geq G$ for all $i$, the third inequality follows from  $u\in\partial_z \tf(x^{k+1}, z^{k+1})$, $\lambda^{k+1}\geq 0$ and the convexity of $\tf(x^{k+1},\cdot)$  and $\tg_i(x^{k+1},\cdot)$ for all $i$, and the last inequality is due to \eqref{DxDy}, \eqref{y-res} and $z^{k+1}
, \hat z^{k+1}\in\mcY$.

In view of \eqref{DxDy}, \eqref{bound-ineq}, $z^{k+1}
, \hat z^{k+1}\in\mcY$, and the Lipschitz continuity of $\tf$, one has
\begin{align}
 D_\bfy\epsilon_k+\rho_kL_\tf D_\bfy & \overset{\eqref{DxDy}}{\geq} D_\bfy\epsilon_k+\rho_kL_\tf\|z^{k+1}-\hat z^{k+1}\|  \geq  D_\bfy\epsilon_k+\rho_k(\tf(x^{k+1},\hat z^{k+1})-\tf(x^{k+1},z^{k+1}))\nn\\
&\overset{\eqref{bound-ineq}}{\geq} G\|\lambda^{k+1}\|_1-\langle\lambda^{k+1},\mu_k^{-1}\lambda^k\rangle\geq (G-\mu_k^{-1}\|\lambda^k\|)\|\lambda^{k+1}\|,\label{bnd-ineq}
\end{align}
where the first inequality is due to \eqref{DxDy} and $z^{k+1}
, \hat z^{k+1}\in\mcY$, the second inequality follows from $L_\tf$-Lipschitz continuity of $\tf$, and the last inequality is due to $\|\lambda^{k+1}\|_1\geq\|\lambda^{k+1}\|$. In addition, it follows from \eqref{ly-cnstr} and \eqref{muk-bnd} that
\[
G-\mu_k^{-1}\|\lambda^k\|\overset{\eqref{ly-cnstr}}{\geq}G-\sqrt{2\rho_k\mu_k^{-1}\vartheta}\ \overset{\eqref{muk-bnd}}{\geq}G/2,
\]
which together with \eqref{bnd-ineq} yields
\[
\|\lambda^{k+1}\|\leq 2G^{-1}(\epsilon_k+\rho_kL_\tf)D_\bfy.
\]
The statement \eqref{y-cnstr} then follows from this, $\epsilon_k\leq \epsilon_0$, and
\[
\|[\tg(x^{k+1},z^{k+1})]_+\|\leq\mu_k^{-1}\|[\lambda^k+\mu_k \tg(x^{k+1},z^{k+1})]_+\|=\mu_k^{-1}\|\lambda^{k+1}\|.
\]

We next show that \eqref{x-cnstr} holds. Indeed, let $\tlambda^{k+1}=[\lambda^k+\mu_k\tg(x^{k+1},y^{k+1})]_+$. It then follows from \eqref{x-stat} that
\begin{align*}
\dist\big(0,\nabla_y f(x^{k+1},y^{k+1})+\rho_k\partial_y\tf(x^{k+1},y^{k+1})+\nabla_y \tg(x^{k+1},y^{k+1})\tlambda^{k+1}\big)\leq\epsilon_k.
\end{align*}
Hence, there exists some $v\in\rho_k^{-1}\nabla_yf(x^{k+1},y^{k+1})+\partial_y\tf(x^{k+1},y^{k+1})$ such that
\[
\|\rho_kv+\nabla_y \tg(x^{k+1},y^{k+1})\tlambda^{k+1}\|\leq\epsilon_k.
\]
The rest of the proof of \eqref{x-cnstr} is similar to the one of \eqref{y-cnstr} with $u$, $z^{k+1}$ and $\lambda^{k+1}$ being replaced with $v$, $y^{k+1}$ and $\tlambda^{k+1}$ respectively and thus omitted.
\end{proof}

The next lemma provides an upper bound on the quantities associated with an approximate KKT solution  
$(x^{k+1},y^{k+1})$.

\begin{lemma}\label{l-subdcnstr}
Suppose that Assumptions \ref{a1} and \ref{a2} hold.  Let $D_\bfy$, $\bbK$ and $\vartheta$ be defined in \eqref{DxDy}, \eqref{def-K} and \eqref{ht}, $L_f$, $L_\tf$ and $G$ be given in Assumptions \ref{a1} and \ref{a2}, and $\epsilon_0$, $\tau$, $\rho_k$ and $\mu_k$ be given in Algorithm \ref{AL-alg}.
Suppose that $(x^{k+1},y^{k+1},z^{k+1},\lambda^{k+1})$ is generated by Algorithm \ref{AL-alg} for  some $0\leq k\in\bbK-1$ with 
\beq\label{muk-1}
\rho_k^{-1}\mu_k\geq8\tau^{-2}G^{-2}\vartheta.
\eeq
Let 
\beq \label{def-tlx} 
 \lambda^{k+1}_\bfy=[\lambda^k+\mu_k\tg(x^{k+1},y^{k+1})]_+,\qquad\lambda^{k+1}_\bfz=\rho_k^{-1}[\lambda^k+\mu_k\tg(x^{k+1},z^{k+1})]_+.
\eeq
Then we have
\begin{align}
&\dist\Big(0,\partial f(x^{k+1},y^{k+1})+\rho_k\partial\tf(x^{k+1},y^{k+1}) - \rho_k\big(\nabla_x\tf(x^{k+1},z^{k+1})+\nabla_x\tg(x^{k+1},z^{k+1})\lambda_\bfz^{k+1}; 0\big)\nn\\
&\qquad+\nabla \tg(x^{k+1},y^{k+1})\lambda_\bfy^{k+1}\Big)\leq\epsilon_k,\label{pF-x}\\
& \dist\Big(0,\rho_k\big(\partial_z\tf(x^{k+1},z^{k+1})+\nabla_z\tg(x^{k+1},z^{k+1})\lambda_\bfz^{k+1}\big)\Big)\leq\epsilon_k,\label{pF-y} \\
& \|[\tg(x^{k+1},z^{k+1})]_+\|\leq2\mu_k^{-1}G^{-1}(\epsilon_0+\rho_kL_\tf)D_\bfy  , \label{y-feas}\\ 
&|\langle\lambda^{k+1}_\bfz,\tg(x^{k+1},z^{k+1})\rangle|\leq2\rho_k^{-1}\mu_k^{-1}G^{-1}(\epsilon_0+\rho_kL_\tf)D_\bfy \max\{\|\lambda^0\|,\ 2G^{-1}(\epsilon_0+\rho_kL_\tf)D_\bfy\}, \label{y-complim}\\
& \|[\tg(x^{k+1},y^{k+1})]_+\|\leq2\mu_k^{-1}G^{-1}(\epsilon_0+L_f+\rho_kL_\tf)D_\bfy  , \label{x-feas}\\ 
&|\langle\lambda^{k+1}_\bfy,\tg(x^{k+1},y^{k+1})\rangle|\leq2\mu_k^{-1}G^{-1}(\epsilon_0+L_f+\rho_kL_\tf)D_\bfy \max\{\|\lambda^0\|,\ 2G^{-1}(\epsilon_0+L_f+\rho_kL_\tf)D_\bfy\}. \label{x-complim}
\end{align}
\end{lemma}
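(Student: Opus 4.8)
The plan is to reduce all six estimates to the $\epsilon_k$-primal-dual stationarity of $(x^{k+1},y^{k+1},z^{k+1})$ for \eqref{AL-sub} (Definition \ref{def1}) together with the feasibility bounds \eqref{y-cnstr} and \eqref{x-cnstr} already proved. The first thing I would note is that since $\tau\in(0,1)$, the hypothesis \eqref{muk-1} is strictly stronger than \eqref{muk-bnd}, so the preceding lemma applies verbatim and delivers \eqref{y-feas} and \eqref{x-feas} with no further work.

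For the stationarity estimates \eqref{pF-x} and \eqref{pF-y}, I would expand $\partial_{(x,y)}\mcL$ and $\partial_z\mcL$ directly from the definition \eqref{Lag}. Applying the chain rule to the terms $\tfrac{1}{2\mu_k}\|[\lambda^k+\mu_k\tg(\cdot)]_+\|^2$ produces the factors $\nabla\tg(x,y)[\lambda^k+\mu_k\tg(x,y)]_+$ and $\nabla_z\tg(x,z)[\lambda^k+\mu_k\tg(x,z)]_+$; substituting the definitions $\lambda^{k+1}_\bfy=[\lambda^k+\mu_k\tg(x^{k+1},y^{k+1})]_+$ and $\rho_k\lambda^{k+1}_\bfz=[\lambda^k+\mu_k\tg(x^{k+1},z^{k+1})]_+$ rewrites $\partial_{(x,y)}\mcL$ and $\partial_z\mcL$ at $(x^{k+1},y^{k+1},z^{k+1})$ into exactly the sets inside the $\dist(0,\cdot)$ of \eqref{pF-x} and, up to an overall sign that is irrelevant for distance to zero, of \eqref{pF-y}. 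Here the $z$-dependent terms of $\mcL$ contribute only to the $x$-block, which accounts for the ``$;0$'' in \eqref{pF-x}. Both claims then follow from $\dist(0,\partial_{(x,y)}\mcL)\leq\epsilon_k$ and $\dist(0,\partial_z\mcL)\leq\epsilon_k$.

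The crux is the complementarity pair \eqref{y-complim} and \eqref{x-complim}. Writing $g=\tg(x^{k+1},z^{k+1})$ and using $g=\mu_k^{-1}\big((\lambda^k+\mu_kg)-\lambda^k\big)$ together with the elementary identity $\langle[v]_+,v\rangle=\|[v]_+\|^2$, I would obtain
\[
\langle\lambda^{k+1}_\bfz,g\rangle=\rho_k^{-1}\mu_k^{-1}\big(\|\lambda^{k+1}\|^2-\langle\lambda^{k+1},\lambda^k\rangle\big),\qquad\lambda^{k+1}=[\lambda^k+\mu_k g]_+,
\]
and the analogous identity for $\langle\lambda^{k+1}_\bfy,\tg(x^{k+1},y^{k+1})\rangle$ but without the $\rho_k^{-1}$ prefactor. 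Since both $\|\lambda^{k+1}\|^2$ and $\langle\lambda^{k+1},\lambda^k\rangle$ are nonnegative, Cauchy--Schwarz gives $|\,\|\lambda^{k+1}\|^2-\langle\lambda^{k+1},\lambda^k\rangle\,|\leq\|\lambda^{k+1}\|\max\{\|\lambda^{k+1}\|,\|\lambda^k\|\}$, and bounding $\|\lambda^{k+1}\|$ by \eqref{y-cnstr} supplies the leading factor $2\rho_k^{-1}\mu_k^{-1}G^{-1}(\epsilon_0+\rho_kL_\tf)D_\bfy$ of the target.

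The main obstacle is controlling $\|\lambda^k\|$ so that it falls under the stated maximum $\max\{\|\lambda^0\|,\,2G^{-1}(\epsilon_0+\rho_kL_\tf)D_\bfy\}$. For $k=0$ this is immediate, as $\lambda^0$ is the initial multiplier. For $k\geq1$ I would invoke \eqref{y-cnstr} at the previous index to get $\|\lambda^k\|\leq2G^{-1}(\epsilon_0+\rho_{k-1}L_\tf)D_\bfy\leq2G^{-1}(\epsilon_0+\rho_kL_\tf)D_\bfy$, and this is precisely where the strengthened hypothesis \eqref{muk-1} is essential: since $\rho_{k-1}^{-1}\mu_{k-1}=\epsilon_{k-1}^{-2}=\tau^2\rho_k^{-1}\mu_k\geq8G^{-2}\vartheta$, condition \eqref{muk-bnd} holds at index $k-1$, licensing the use of the preceding lemma there. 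With both $\|\lambda^{k+1}\|$ and $\|\lambda^k\|$ under the maximum, \eqref{y-complim} follows; repeating the computation with $\tg(x^{k+1},y^{k+1})$ and the bound \eqref{x-cnstr}, which carries the extra $L_f$, yields \eqref{x-complim}.
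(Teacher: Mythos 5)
Your proposal is correct and follows essentially the same route as the paper's proof: both obtain \eqref{pF-x}--\eqref{pF-y} by expanding $\partial_{(x,y)}\mcL$ and $\partial_z\mcL$ and substituting \eqref{def-tlx}, deduce \eqref{y-feas}, \eqref{x-feas} and the multiplier bounds from the preceding lemma after noting that \eqref{muk-1} implies \eqref{muk-bnd}, and control $\|\lambda^k\|$ by $\max\{\|\lambda^0\|,\,2G^{-1}(\epsilon_0+\rho_kL_\tf)D_\bfy\}$ via applying that lemma at index $k-1$, which is licensed exactly as you say because $\rho_{k-1}^{-1}\mu_{k-1}=\tau^2\rho_k^{-1}\mu_k\geq 8G^{-2}\vartheta$. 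Your only departure is cosmetic: you handle the complementarity terms through the single exact identity $\langle[v]_+,v\rangle=\|[v]_+\|^2$ and bound the absolute value in one step, whereas the paper derives matching upper and lower bounds separately (upper via $\langle\lambda,[\tg]_+\rangle$ and Cauchy--Schwarz, lower via \eqref{complim-ineq}), with identical resulting constants.
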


\begin{proof}
Suppose that $(x^{k+1},y^{k+1},z^{k+1},\lambda^{k+1})$ is generated by Algorithm \ref{AL-alg} for  some $0\leq k\in\bbK-1$ satisfying \eqref{muk-1}.  Notice that $(x^{k+1}, y^{k+1},z^{k+1})$ is an $\epsilon_k$-primal-dual stationary point of \eqref{AL-sub}. It then follows from Definition \ref{def1} that
\begin{align}
&\dist\big(0,\partial_{(x,y)}\mcL(x^{k+1},y^{k+1},z^{k+1},\lambda^k;\rho_k,\mu_k)\big)\leq\epsilon_k,\label{x-stationary}\\
&\dist\big(0,\partial_z\mcL(x^{k+1},y^{k+1},z^{k+1},\lambda^k;\rho_k,\mu_k)\big)\leq\epsilon_k.\label{y-stationary}
\end{align}
In view of these, \eqref{Lag} and \eqref{def-tlx}, one has
\begin{align*}
&\partial_{(x,y)}\mcL(x^{k+1},y^{k+1},z^{k+1},\lambda^k;\rho_k,\mu_k) \\
&=\ \partial f(x^{k+1},y^{k+1})+\rho_k\partial\tf(x^{k+1},y^{k+1})+\nabla \tg(x^{k+1},y^{k+1})[\lambda^k+\mu_k\tg(x^{k+1},y^{k+1})]_+\\
&\quad\,\, -\big(\rho_k\nabla_x\tf(x^{k+1},z^{k+1})+\nabla_x\tg(x^{k+1},z^{k+1})[\lambda^k+\mu_k\tg(x^{k+1},z^{k+1})]_+;0\big)\\
&=\ \partial f(x^{k+1},y^{k+1})+\rho_k\partial\tf(x^{k+1},y^{k+1})-\rho_k\big(\nabla_x\tf(x^{k+1},z^{k+1})+\nabla_x \tg(x^{k+1},z^{k+1})\lambda^{k+1}_\bfz;0\big)\\
&\quad\,\,+\nabla\tg(x^{k+1},y^{k+1})\lambda^{k+1}_\bfy, \\
&\partial_z\mcL(x^{k+1},y^{k+1},z^{k+1},\lambda^k;\rho_k,\mu_k)= \ -\rho_k\partial_z \tf(x^{k+1},z^{k+1})-\nabla_z \tg(x^{k+1},z^{k+1})[\lambda^k+\mu_k\tg(x^{k+1},z^{k+1})]_+\\
&=\ -\rho_k\big(\partial_z\tf(x^{k+1},z^{k+1})+\nabla_z\tg(x^{k+1},z^{k+1})\lambda_\bfz^{k+1}\big).
\end{align*}
These relations together with  \eqref{x-stationary} and \eqref{y-stationary} imply that \eqref{pF-x} and \eqref{pF-y} hold.

Notice from Algorithm \ref{AL-alg} that $0<\tau<1$, which together with \eqref{muk-1} implies that \eqref{muk-bnd} holds for $\mu_k$ and $\rho_k$. It then follows that \eqref{y-cnstr} and \eqref{x-cnstr} hold, which immediately yields \eqref{y-feas}, \eqref{x-feas}, and 
\beq\label{lambda-bnd}
\|\lambda^{k+1}\|\leq2G^{-1}(\epsilon_0+\rho_kL_\tf)D_\bfy, \quad \|[\lambda^k+\mu_k\tg(x^{k+1},y^{k+1})]_+\|\leq2G^{-1}(\epsilon_0+L_f+\rho_kL_\tf)D_\bfy.
\eeq
Also, notice from \eqref{def-tlx} and $\lambda^{k+1}=[\lambda^k+\mu_k\tg(x^{k+1},z^{k+1})]_+$ that 
$\lambda_\bfz^{k+1}=\rho_k^{-1}\lambda^{k+1}$. By this, \eqref{def-tlx} and \eqref{lambda-bnd},  one has
\beq \label{lambday-bnd1}
\|\lambda_\bfz^{k+1}\| \leq2\rho_k^{-1}G^{-1}(\epsilon_0+\rho_kL_\tf)D_\bfy, \qquad  \|\lambda_\bfy^{k+1}\|\leq2G^{-1}(\epsilon_0+L_f+\rho_kL_\tf)D_\bfy.
\eeq
Observe from \eqref{def-tlx} that $\langle\lambda_\bfy^{k+1},\lambda^k+\mu_k\tg(x^{k+1},y^{k+1})\rangle=\|[\lambda^k+\mu_k\tg(x^{k+1},y^{k+1})]_+\|^2\geq0$, which implies that 
\beq \label{y-complim-ineq}
-\langle\lambda_\bfy^{k+1},\mu_k^{-1}\lambda^k\rangle \leq \langle\lambda_\bfy^{k+1},\tg(x^{k+1},y^{k+1})\rangle.
\eeq
In addition, we claim that 
\beq \label{lambday-bnd2}
\|\lambda^k\|\leq\max\{\|\lambda^0\|,\ 2G^{-1}(\epsilon_0+\rho_kL_\tf)D_\bfy\}.
\eeq
Indeed, \eqref{lambday-bnd2} clearly holds if $k=0$.  We now assume that $k>0$. Notice from Algorithm \ref{AL-alg} that $\mu_{k-1}=\tau^3\mu_k$ and $\rho_{k-1}=\tau\rho_k$, which along with \eqref{muk-1} imply that $\rho_{k-1}^{-1}\mu_{k-1}\geq8G^{-2}\vartheta$. By this and Lemma \ref{l-ycnstr} with $k$ replaced by $k-1$, one can conclude that $\|\lambda^k\|\leq2G^{-1}(\epsilon_0+\rho_{k-1} L_\tf)D_\bfy$.  This together with $\rho_{k-1}<\rho_k$ implies that \eqref{lambday-bnd2} holds as desired.

We next show that \eqref{y-complim} and \eqref{x-complim} hold. By $\lambda_\bfy^{k+1},\lambda_\bfz^{k+1}\geq 0$,  \eqref{y-feas}, \eqref{x-feas}, \eqref{lambday-bnd1}, \eqref{y-complim-ineq} and \eqref{lambday-bnd2}, one has 
\begin{align*}
\langle\lambda_\bfz^{k+1},\tg(x^{k+1},z^{k+1})\rangle & \ \ \leq \ \langle\lambda_\bfz^{k+1},[\tg(x^{k+1},z^{k+1})]_+\rangle \leq\|\lambda_\bfz^{k+1}\|\|[\tg(x^{k+1},z^{k+1})]_+\|\\ 
& \overset{\eqref{y-feas}\eqref{lambday-bnd1}}\leq4\rho_k^{-1}\mu_k^{-1}G^{-2}(\epsilon_0+\rho_kL_\tf)^2D_\bfy^2,\\
\langle\lambda^{k+1}_\bfz,\tg(x^{k+1},z^{k+1})\rangle & = \rho_k^{-1}\langle\lambda^{k+1},\tg(x^{k+1},z^{k+1})\rangle \overset{\eqref{complim-ineq}}{\geq}-\rho_k^{-1}\langle\lambda^{k+1},\mu_k^{-1}\lambda^k\rangle\geq-\rho_k^{-1}\mu_k^{-1}\|\lambda^{k+1}\|\|\lambda^k\|\\
&\ \overset{\eqref{lambda-bnd}\eqref{lambday-bnd2}}{\geq}-2\rho_k^{-1}\mu_k^{-1}G^{-1}(\epsilon_0+\rho_k L_\tf)D_\bfy \max\{\|\lambda^0\|,\ 2G^{-1}(\epsilon_0+\rho_kL_\tf)D_\bfy\},\\
\langle\lambda_\bfy^{k+1},\tg(x^{k+1},y^{k+1})\rangle & \ \ \leq \ \langle\lambda_\bfy^{k+1},[\tg(x^{k+1},y^{k+1})]_+\rangle \leq\|\lambda_\bfy^{k+1}\|\|[\tg(x^{k+1},y^{k+1})]_+\|\\ 
& \overset{\eqref{x-feas}\eqref{lambday-bnd1}}\leq4\mu_k^{-1}G^{-2}(\epsilon_0+L_f+\rho_kL_\tf)^2D_\bfy^2,\\
\langle\lambda_\bfy^{k+1},\tg(x^{k+1},y^{k+1})\rangle&\ \   \overset{\eqref{y-complim-ineq}}\geq\langle\lambda_\bfy^{k+1},-\mu_k^{-1}\lambda^k\rangle\geq-\mu_k^{-1}\|\lambda_\bfy^{k+1}\|\|\lambda^k\|\\
&\ \overset{\eqref{lambday-bnd1}\eqref{lambday-bnd2}}{\geq}-2\mu_k^{-1}G^{-1}(\epsilon_0+L_f+\rho_k L_\tf)D_\bfy \max\{\|\lambda^0\|,\ 2G^{-1}(\epsilon_0+\rho_kL_\tf)D_\bfy\}.
\end{align*}
These relations imply that \eqref{y-complim} and \eqref{x-complim} hold.
\end{proof}

The following lemma provides an estimate on operation complexity at step 3 of Algorithm~\ref{AL-alg} for  problem \eqref{prob} with $\sigma=0$, i.e., $\tf_1(x,\cdot)$ being convex but not strongly convex for any given $x\in\dom\,f_2$.

\begin{lemma}\label{l-subp}
Suppose that Assumption \ref{a1} holds with $\sigma=0$, i.e., $\tf_1(x,\cdot)$ being convex but not strongly convex for any given $x\in\dom\,f_2$.  Let $f^*$, $L_k$, $\tf^*_{\rm hi}$, $D_\bfx$, $D_\bfy$, $f_{\rm hi}$, $f_{\rm low}$ $\tf_{\rm low}$, $\tg_{\rm hi}$, $\bbK$ and $\vartheta$ be defined in \eqref{prob}, \eqref{Lk}, \eqref{def-tFx}, \eqref{DxDy}, \eqref{fhi}, \eqref{tfhi}, \eqref{def-K} and \eqref{ht}, $L_\tf$ be given in Assumption \ref{a1}, $\epsilon_k$, $\rho_k$ and $\mu_k$ be given in Algorithm \ref{AL-alg}, and
\begin{align}
\alpha_k=&\ \min\Big\{1,\sqrt{4\epsilon_k/(D_\bfy L_k)}\Big\},\label{mmax-omega}\\
\delta_k=&\ (2+\alpha_k^{-1})L_k (D_\bfx^2+D_\bfy^2)+\max\left\{\epsilon_k/D_\bfy,\alpha_k L_k/4\right\}D_\bfy^2,\label{mmax-zeta}\\
M_k=&\ \frac{16\max\left\{1/(2L_k),\min\left\{D_\bfy/\epsilon_k,4/(\alpha_k L_k)\right\}\right\}\mu_k}{\left[(3L_k+\epsilon_k/(2D_\bfy))^2/\min\{L_k,\epsilon_k/(2D_\bfy)\}+ 3L_k+\epsilon_k/(2D_\bfy)\right]^{-2}\epsilon_k^2}\times\Big(\delta_k+2\alpha_k^{-1}\nn\\
&\times\big(f^*-f_{\rm low}+\rho_k(\tf^*_{\rm hi}-\tf_{\rm low})+\rho_kL_\tf D_\bfy+3\rho_k\vartheta+\mu_k\tg_{\rm hi}^2+\epsilon_k D_\bfy/4+L_k (D_\bfx^2+D_\bfy^2)\big)\Big),\label{mmax-Mk}\\
T_k=&\ \left\lceil16\left(f_{\rm hi}-f_{\rm low}+\rho_k\epsilon_k+\epsilon_k D_\bfy/4\right) L_k\epsilon_k^{-2}+8(1+4D_\bfy^2L_k^2\epsilon_k^{-2})\mu_k^{-1}-1\right\rceil_+,\label{mmax-Tk}\\
N_k=&\ \left(\left\lceil96\sqrt{2}\left(1+\left(24L_k+4\epsilon_k/D_\bfy\right)L_k^{-1}\right)\right\rceil+2\right)\max\Big\{2,\sqrt{D_\bfy L_k\epsilon_k^{-1}}\Big\}\notag\\
&\ \times\left((T_k+1)(\log M_k)_++T_k+1+2T_k\log(T_k+1) \right).\label{mmax-N}
\end{align}
Then for all $0\leq k\in\bbK-1$, an $\epsilon_k$-primal-dual stationary point $(x^{k+1},y^{k+1},z^{k+1})$ of problem \eqref{AL-sub} is successfully found at step 3 of Algorithm~\ref{AL-alg} that satisfies 
\begin{align}
\max_z\mcL(x^{k+1},y^{k+1},z,\lambda^k;\rho_k,\mu_k)\leq&\  f_{\rm hi}+\rho_k\epsilon_k+\frac{\epsilon_k D_\bfy}{4}+\frac{1}{2\mu_k}\left(L_k^{-1}\epsilon_k^2+4D_\bfy^2L_k\right).\label{upperbnd-k}
\end{align}
Moreover, the total number of evaluations of $\nabla f_1$, $\nabla \tf_1$, $\nabla \tg$ and proximal operators of $f_2$ and $\tf_2$ performed at step 3 in iteration $k$ of Algorithm~\ref{AL-alg} is no more than $N_k$, respectively.
\end{lemma}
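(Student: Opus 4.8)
The plan is to reduce the statement to the complexity guarantee for Algorithm~\ref{mmax-alg2} recorded in Theorem~\ref{mmax-thm} (Appendix~\ref{appendix-B}), applied to the minimax subproblem~\eqref{AL-sub}. First I would verify that \eqref{AL-sub} fits the template of Theorem~\ref{mmax-thm}: by the decomposition stated just after \eqref{Lag}, the function $\mcL(\cdot,\cdot,\cdot,\lambda^k;\rho_k,\mu_k)$ is the sum of a smooth $h$ and a separable nonsmooth $p(x,y)-q(z)$ with exactly computable proximal operators, it is nonconvex in $(x,y)$ and, since $\sigma=0$, merely concave in $z$; using Assumption~\ref{a1}, \eqref{tghi}, and the bound $\|\lambda^k\|\le\sqrt{2\rho_k\mu_k\vartheta}$ from Lemma~\ref{l-ycnstr}, the smooth part has gradient Lipschitz constant at most $L_k$ as defined in~\eqref{Lk}. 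The inputs prescribed in step~3 of Algorithm~\ref{AL-alg}, namely $\hat x^0=(x^k,y^k_{\rm init})$, $\hat y^0=z^k$, $L_{\nabla h}\leftarrow L_k$, and $\hat\epsilon_0=\epsilon_k/(2\sqrt{\mu_k})$, are exactly those demanded by Theorem~\ref{mmax-thm}, so that theorem simultaneously guarantees that an $\epsilon_k$-primal-dual stationary point is produced and supplies explicit iteration and operation counts; the quantities $\alpha_k,\delta_k,M_k,T_k,N_k$ in~\eqref{mmax-omega}--\eqref{mmax-N} are the specializations of those counts once the problem constants are inserted.

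The key analytic input is a bound on the initial primal value of~\eqref{AL-sub}. Rewriting \eqref{Lag} through \eqref{tLag} gives the identity
\[
\max_z\mcL(x,y,z,\lambda^k;\rho_k,\mu_k)=f(x,y)+\rho_k\big(\tcL(x,y,\lambda^k;\rho_k,\mu_k)-\min_z\tcL(x,z,\lambda^k;\rho_k,\mu_k)\big),
\]
so the warm-start inequality~\eqref{y-nf} immediately yields $\max_z\mcL(x^k,y^k_{\rm init},z,\lambda^k;\rho_k,\mu_k)\le f(x^k,y^k_{\rm init})+\rho_k\epsilon_k\le f_{\rm hi}+\rho_k\epsilon_k$, while the same identity gives $\max_z\mcL\ge f(x,y)\ge f_{\rm low}$. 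To convert Theorem~\ref{mmax-thm}'s abstract bound into the explicit $M_k$ I would further bound the individual terms of $\mcL$ over $\mcX\times\mcY$: the penalty term contributes the $\mu_k\tg_{\rm hi}^2$ summand via $\tfrac1{2\mu_k}\|\mu_k\tg\|^2\le\tfrac{\mu_k}{2}\tg_{\rm hi}^2$, the multiplier terms contribute the $\rho_k\vartheta$ summand through Lemma~\ref{l-ycnstr}, Lemma~\ref{dual-bnd} controls $\tf^*_{\rm hi}$, and the diameters account for the $L_k(D_\bfx^2+D_\bfy^2)$ summand. Feeding these estimates into the count of Theorem~\ref{mmax-thm} and collecting constants produces $N_k$ in~\eqref{mmax-N}.

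Finally, the output estimate~\eqref{upperbnd-k} would be read off from the primal-value control built into Theorem~\ref{mmax-thm}. With the inner maximization solved only to tolerance $\hat\epsilon_0=\epsilon_k/(2\sqrt{\mu_k})$ and the terminal stationarity driven below $\epsilon_k$, the value $\max_z\mcL$ at the returned point exceeds the warm-started initial value $f_{\rm hi}+\rho_k\epsilon_k$ by a slack that the theorem quantifies in terms of $\hat\epsilon_0$, the $z$-block smoothness carrying the $1/\mu_k$ weight of the concave block of $\mcL$, and $D_\bfy$; collecting these yields exactly the additive terms $\tfrac{\epsilon_kD_\bfy}{4}$ and $\tfrac1{2\mu_k}(L_k^{-1}\epsilon_k^2+4D_\bfy^2L_k)$. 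The main obstacle is precisely this step: for a nonconvex and merely concave minimax problem there is no monotone primal descent, so \eqref{upperbnd-k} cannot come from a naive descent inequality and must instead be distilled from the Lyapunov and primal-gap analysis underlying Theorem~\ref{mmax-thm}; the remaining work of turning that theorem's abstract constants into the closed forms $\alpha_k,\delta_k,M_k,T_k$ is a lengthy but essentially mechanical substitution of the bounds assembled above.
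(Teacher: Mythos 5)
Your proposal is correct and follows essentially the same route as the paper's proof: verify that \eqref{AL-sub} has the composite structure and $L_k$-smoothness required by Theorem~\ref{mmax-thm}, derive the warm-start bound $\max_z\mcL(x^k,y^k_{\rm init},z,\lambda^k;\rho_k,\mu_k)\leq f_{\rm hi}+\rho_k\epsilon_k$ from \eqref{y-nf}, bound $H^*$ and $H_{\rm low}$ using Lemma~\ref{l-ycnstr} and the problem constants (the paper obtains the $f^*$ term by evaluating the upper bound at an optimal solution $(x^*,y^*)$ of \eqref{prob}, which is the one mechanism you leave slightly implicit), and then read \eqref{upperbnd-k} and $N_k$ off Theorem~\ref{mmax-thm} with $\hat\sigma_y=\epsilon_k/(2D_\bfy)$, $\hat\epsilon_0=\epsilon_k/(2\sqrt{\mu_k})$, $\widehat L=3L_k+\epsilon_k/(2D_\bfy)$, $D_p=\sqrt{D_\bfx^2+D_\bfy^2}$, $D_q=D_\bfy$. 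You also correctly identify that \eqref{upperbnd-k} is not a descent estimate but the primal-value control built into Theorem~\ref{mmax-thm}, whose $(\hat\sigma_y-\sigma_y)D_q^2/2$ and $2\hat\epsilon_0^2(L_{\nabla\bh}^{-1}+\hat\sigma_y^{-2}L_{\nabla\bh})$ terms specialize exactly to $\epsilon_kD_\bfy/4$ and $\tfrac{1}{2\mu_k}(L_k^{-1}\epsilon_k^2+4D_\bfy^2L_k)$.
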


\begin{proof}
Observe from \eqref{Lag} and Assumption \ref{a1} that problem \eqref{AL-sub} can be viewed as
\[
\min_{x,y}\max_z\{h(x,y,z)+p(x,y)-q(z)\}
\]
with
\begin{align*}
&h(x,y,z)=f_1(x,y)+\rho_k\tf_1(x,y)+\frac{1}{2\mu_k}\|[\lambda^k+\mu_k \tg(x,y)]_+\|^2-\rho_k \tf_1(x,z)-\frac{1}{2\mu_k}\|[\lambda^k+\mu_k \tg(x,z)]_+\|^2,\nn\\
&p(x,y)=f_2(x)+\rho_k\tf_2(y),\quad q(z)=\rho_k\tf_2(z).
\end{align*}
By \eqref{tghi} and Assumption \ref{a1}, it can be verified that $\|[\lambda^k+\mu_k\tg (x,y)]_+\|^2/(2\mu_k)$ and $\|[\lambda^k+\mu_k\tg (x,z)]_+\|^2/(2\mu_k)$ are both $(\mu_kL_\tg^2+\mu_k\tg_{\rm hi}L_{\nabla\tg}+\|\lambda^k\| L_{\nabla\tg})$-smooth on $\mcX\times\mcY$. Using this and the fact that $f_1$ and $\tf_1$ are respectively $L_{\nabla f_1}$- and $L_{\nabla\tf_1}$-smooth on $\mcX\times\mcY$, we can see that $h(x,y,z)$ is $L_k$-smooth on $\mcX\times\mcY\times\mcY$ for all $0\leq k\in\bbK-1$, where $L_k$ is given in \eqref{Lk}. In addition, it follows from Assumption \ref{a1} and $\sigma=0$ that $h(x,y,\cdot)$ is concave but not strongly concave. Consequently, it follows from Theorem \ref{mmax-thm} (see Appendix \ref{appendix-B}) that an $\epsilon_k$-primal-dual stationary point $(x^{k+1},y^{k+1},z^{k+1})$ of problem \eqref{AL-sub} is successfully found by Algorithm \ref{mmax-alg2} at step 3 of Algorithm~\ref{AL-alg}.

In addition, by \eqref{Lag}, \eqref{tLag} and \eqref{fhi}, one has
\begin{align}
\min_{x,y}\max_z\mcL(x,y,z,\lambda^k;\rho_k,\mu_k)\overset{\eqref{Lag}\eqref{tLag}}{=}&\min_{x,y}\Big\{f(x,y)+\rho_k\tcL(x,y,\lambda^k;\rho_k,\mu_k)-\min_z\rho_k\tcL(x,z,\lambda^k;\rho_k,\mu_k)\Big\}\nn\\
\geq\ \ &\min_{(x,y)\in\mcX\times\mcY}f(x,y)\overset{\eqref{fhi}}{=} f_{\rm low}.\label{e1}
\end{align}
Let $(x^*,y^*)$ be an optimal solution of \eqref{prob}. It then follows that $f(x^*,y^*)=f^*$, $\tf(x^*,y^*)=\tf^*(x^*)$ and $\tg(x^*,y^*)\leq0$, where $f^*$ and $\tf^*$ are defined in \eqref{prob} and \eqref{tfstarx}, respectively. Using these, \eqref{Lag}, \eqref{tLag}, \eqref{def-tFx}, \eqref{tfhi} and \eqref{ly-cnstr}, we obtain that
\begin{align}
&\ \ \min_{x,y}\max_z\mcL(x,y,z,\lambda^k;\rho_k,\mu_k)\leq\max_z\mcL(x^*,y^*,z,\lambda^k;\rho_k,\mu_k)\nn\\
&\ \ \overset{\eqref{Lag}\eqref{tLag}}{=}f(x^*,y^*)+\rho_k\tf(x^*,y^*)+\frac{1}{2\mu_k}\|[\lambda^k+\mu_k\tg(x^*,y^*)]_+\|^2-\min_z\rho_k\tL(x^*,z,\lambda^k;\rho_k,\mu_k)\nn\\
&\quad\, \leq f^*+\rho_k\tf^*(x^*)+\frac{1}{2\mu_k}\|\lambda^k\|^2-\min_z\left\{\rho_k\tf(x^*,z)+\frac{1}{2\mu_k}\|[\lambda^k+\mu_k\tg(x^*,z)]_+\|^2\right\}\nn\\
&\ \overset{\eqref{def-tFx}\eqref{tfhi}}{\leq} f^*+\rho_k(\tf^*_{\rm hi}-\tf_{\rm low})+\frac{1}{2\mu_k}\|\lambda^k\|^2\overset{\eqref{ly-cnstr}}{\leq} f^*+\rho_k(\tf^*_{\rm hi}-\tf_{\rm low})+\rho_k\vartheta,\label{e2}
\end{align}
where the second inequality is due to $\tf(x^*,y^*)=\tf^*(x^*)$, $\tg(x^*,y^*)\leq0$, and \eqref{tLag}. Also, by \eqref{Lag}, \eqref{DxDy}, \eqref{fhi}, \eqref{tfhi} and \eqref{ly-cnstr}, one has
\begin{align}
&\min_{(x,y,z)\in\mcX\times\mcY\times\mcY}\mcL(x,y,z,\lambda^k;\rho_k,\mu_k)\nn\\
&\overset{\eqref{Lag}}{\geq}\min_{(x,y,z)\in\mcX\times\mcY\times\mcY}\left\{f(x,y)+\rho_k(\tf(x,y)-\tf(x,z))-\frac{1}{2\mu_k}\|[\lambda^k+\mu_k\tg(x,z)]_+\|^2\right\}\nn\\
&\ \geq\min_{(x,y,z)\in\mcX\times\mcY\times\mcY}\left\{f(x,y)-\rho_kL_\tf\|y-z\|-\frac{1}{2\mu_k}\left(\|\lambda^k\|+\mu_k\|[\tg(x,z)]_+\|\right)^2\right\}\nn\\
&\ \geq\min_{(x,y,z)\in\mcX\times\mcY\times\mcY}\left\{f(x,y)-\rho_kL_\tf\|y-z\|-\frac{1}{\mu_k}\|\lambda^k\|^2-\mu_k\|[\tg(x,z)]_+\|^2\right\}\nn\\
&\ \geq f_{\rm low}-\rho_kL_\tf D_\bfy-2\rho_k\vartheta-\mu_k\tg_{\rm hi}^2,\label{e3}
\end{align}
where the second inequality is due to $\lambda^k\in\bR_+^l$ and $L_\tf$-Lipschitz continuity of $\tf$ (see Assumption \ref{a1}(i)), and the last inequality is due to \eqref{DxDy}, \eqref{fhi}, \eqref{tfhi} and \eqref{ly-cnstr}. Notice from step 2 of Algorithm~\ref{AL-alg} that $y_{\rm init}^k$ is an approximate solution of $\min_z\tL(x^{k},z,\lambda^k;\rho_k,\mu_k)$ satisfying \eqref{y-nf}. It then follows from \eqref{Lag}, \eqref{tLag}, \eqref{y-nf} and \eqref{fhi} that
\begin{align}
\max_z\mcL(x^k,y_{\rm init}^k,z,\lambda^k;\rho_k,\mu_k)\overset{\eqref{Lag}\eqref{tLag}}=&f(x^k,y_{\rm init}^k)+\rho_k\Big(\tcL(x^k,y_{\rm init}^k,\lambda^k;\rho_k,\mu_k)-\min_z\tcL(x^k,z,\lambda^k;\rho_k,\mu_k)\Big)\nn\\
\overset{\eqref{y-nf}}{\leq}\ &f(x^k,y_{\rm init}^k)+\rho_k\epsilon_k\overset{\eqref{fhi}}{\leq} f_{\rm hi}+\rho_k\epsilon_k.\label{init-bnd}
\end{align}

To complete the rest of the proof, let
\begin{align}
&H(x,y,z)=\mcL(x,y,z,\lambda^k;\rho_k,\mu_k),\quad H^*=\min_{x,y}\max_z\mcL(x,y,z,\lambda^k;\rho_k,\mu_k),\label{Hk1}\\
&H_{\rm low}=\min\left\{\mcL(x,y,z,\lambda^k;\rho_k,\mu_k)|(x,y,z)\in\mcX\times\mcY\times\mcY\right\}.\label{Hk2}
\end{align}
In view of these, \eqref{e1}, \eqref{e2}, \eqref{e3} and \eqref{init-bnd}, we obtain that
\begin{align*}
&\max_zH(x^k,y_{\rm init}^k,z)\overset{\eqref{init-bnd}}{\leq} f_{\rm hi}+\rho_k\epsilon_k,\\
& f_{\rm low}\overset{\eqref{e1}}{\leq} H^*\overset{\eqref{e2}}{\leq} f^*+\rho_k(\tf^*_{\rm hi}-\tf_{\rm low})+\rho_k\vartheta,\\
& H_{\rm low}\overset{\eqref{e3}}{\geq} f_{\rm low}-\rho_kL_\tf D_\bfy-2\rho_k\vartheta-\mu_k\tg_{\rm hi}^2.
\end{align*}
Using these and Theorem \ref{mmax-thm} (see Appendix \ref{appendix-B}) with $\hat x^0=(x^k,y_{\rm init}^k)$, $\epsilon=\epsilon_k$, $\hat\epsilon_0=\epsilon_k/(2\sqrt{\mu_k})$, $L_{\nabla\bh}=L_k$, $\widehat L=3L_k+\epsilon_k/(2D_\bfy)$, $\sigma_y=0$, $\hat\sigma_y=\epsilon_k/(2D_\bfy)$, $\hat\alpha=\alpha_k$, $\hat\delta=\delta_k$, $D_p=\sqrt{D_\bfx^2+D_\bfy^2}$, $D_q=D_\bfy$, and $\bH$, $\bH^*$, $\bH_{\rm low}$ given in \eqref{Hk1} and \eqref{Hk2}, we can conclude that 
the $\epsilon_k$-primal-dual stationary point $(x^{k+1},y^{k+1},z^{k+1})$ of problem \eqref{AL-sub} found at step 3 of Algorithm~\ref{AL-alg}  satisfies \eqref{upperbnd-k}. Moreover, the total number of evaluations of $\nabla f_1$, $\nabla \tf_1$, $\nabla \tg$ and proximal operators of $f_2$ and $\tf_2$ performed by Algorithm \ref{mmax-alg2} at step 3 of Algorithm~\ref{AL-alg}  is no more than $N_k$, respectively.
\end{proof}

The next lemma presents an upper bound on the optimality violation of $y^{k+1}$ for the lower-level problem of \eqref{prob} when $\sigma=0$ and $x=x^{k+1}$.

\begin{lemma}\label{l-fgap}
Suppose that Assumptions \ref{a1} and \ref{a2} hold with $\sigma=0$, i.e., $\tf_1(x,\cdot)$ being convex but not strongly convex for any given $x\in\dom\,f_2$.  Let $\tf^*$, $L_k$, $D_\bfy$, $f_{\rm hi}$, $f_{\rm low}$ and $\bbK$ be defined in \eqref{tfstarx}, \eqref{Lk}, \eqref{DxDy}, \eqref{fhi} and \eqref{def-K}, $L_f$, $L_\tf$ and $G$ be given in Assumptions \ref{a1} and \ref{a2}, and $\epsilon_k$, $\rho_k$, $\mu_k$ and $\lambda^0$ be given in Algorithm \ref{AL-alg}. Suppose that $(x^{k+1},y^{k+1},\lambda^{k+1})$ is generated by Algorithm \ref{AL-alg} for some $0\leq k\in\bbK-1$ satisfying \eqref{muk-1}. Then we have
\begin{align*}
|\tf(x^{k+1},y^{k+1})-\tf^*(x^{k+1})|\leq\max\Bigg\{&2\mu_k^{-1}G^{-2}L_\tf(\epsilon_0+L_f+\rho_kL_\tf)D_\bfy^2,\nn\\
&\ \rho_k^{-1}\mu_k^{-1}\max\{\|\lambda^0\|,\ 2G^{-1}(\epsilon_0+\rho_kL_\tf)D_\bfy\}/2\nn\\
&+\rho_k^{-1 }\left(f_{\rm hi}-f_{\rm low}+\rho_k\epsilon_k+\frac{\epsilon_k D_\bfy}{4}+\frac{1}{2\mu_k}\left(L_k^{-1}\epsilon_k^2+4D_\bfy^2L_k\right)\right)\Bigg\}.
\end{align*}
\end{lemma}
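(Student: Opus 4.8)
The plan is to bound the two one‑sided gaps $\tf(x^{k+1},y^{k+1})-\tf^*(x^{k+1})$ and $\tf^*(x^{k+1})-\tf(x^{k+1},y^{k+1})$ separately, and then combine them through $|a|=\max\{a,-a\}$, which produces exactly the two entries of the outer maximum in the claimed bound. The lower direction, which yields the first entry, is the easier one; the upper direction, which yields the second entry, is where the real work lies.

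For the lower direction I would invoke Lemma~\ref{dual-bnd}. Let $\lambda^*\in\Lambda^*(x^{k+1})$ be an optimal Lagrange multiplier of \eqref{tfstarx} at $x^{k+1}$, so that by Lemma~\ref{dual-bnd}(i),(iii) we have $\lambda^*\geq0$, $\|\lambda^*\|\leq G^{-1}L_\tf D_\bfy$, and $\tf^*(x^{k+1})=\min_z\{\tf(x^{k+1},z)+\langle\lambda^*,\tg(x^{k+1},z)\rangle\}\leq \tf(x^{k+1},y^{k+1})+\langle\lambda^*,\tg(x^{k+1},y^{k+1})\rangle$. Since $\lambda^*\geq0$, the inner product is at most $\langle\lambda^*,[\tg(x^{k+1},y^{k+1})]_+\rangle\leq\|\lambda^*\|\,\|[\tg(x^{k+1},y^{k+1})]_+\|$. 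Combining the multiplier bound with the near‑feasibility estimate \eqref{x-feas} gives $\tf^*(x^{k+1})-\tf(x^{k+1},y^{k+1})\leq 2\mu_k^{-1}G^{-2}L_\tf(\epsilon_0+L_f+\rho_kL_\tf)D_\bfy^2$, which is precisely the first entry of the maximum.

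For the upper direction I would first use \eqref{tLag} together with $\lambda_\bfy^{k+1}=[\lambda^k+\mu_k\tg(x^{k+1},y^{k+1})]_+$ to write the identity
\[
\tf(x^{k+1},y^{k+1})-\tf^*(x^{k+1})=\Delta_k+\Big(\min_z\tcL(x^{k+1},z,\lambda^k;\rho_k,\mu_k)-\tf^*(x^{k+1})\Big)-\frac{\|\lambda_\bfy^{k+1}\|^2}{2\rho_k\mu_k},
\]
where $\Delta_k:=\tcL(x^{k+1},y^{k+1},\lambda^k;\rho_k,\mu_k)-\min_z\tcL(x^{k+1},z,\lambda^k;\rho_k,\mu_k)$. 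By \eqref{Lag}, $\rho_k\Delta_k=\max_z\mcL(x^{k+1},y^{k+1},z,\lambda^k;\rho_k,\mu_k)-f(x^{k+1},y^{k+1})$, so the bound \eqref{upperbnd-k} from Lemma~\ref{l-subp} together with $f(x^{k+1},y^{k+1})\geq f_{\rm low}$ controls $\Delta_k$ by the group of terms $\rho_k^{-1}\big(f_{\rm hi}-f_{\rm low}+\rho_k\epsilon_k+\epsilon_kD_\bfy/4+\tfrac{1}{2\mu_k}(L_k^{-1}\epsilon_k^2+4D_\bfy^2L_k)\big)$ appearing in the second entry. The middle bracket is handled by \eqref{p-ineq}, giving $\min_z\tcL(x^{k+1},z,\lambda^k;\rho_k,\mu_k)-\tf^*(x^{k+1})\leq\|\lambda^k\|^2/(2\rho_k\mu_k)$, so that the still‑unaccounted residual is $(\|\lambda^k\|^2-\|\lambda_\bfy^{k+1}\|^2)/(2\rho_k\mu_k)$, which must be absorbed into the remaining term $\tfrac12\rho_k^{-1}\mu_k^{-1}\max\{\|\lambda^0\|,2G^{-1}(\epsilon_0+\rho_kL_\tf)D_\bfy\}$.

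The main obstacle is precisely this last step. Each of $\|\lambda^k\|^2$ and $\|\lambda_\bfy^{k+1}\|^2$ is individually of order $\rho_k^2$, so simply discarding $-\|\lambda_\bfy^{k+1}\|^2$ is far too lossy and one must exploit the cancellation between the two squared multipliers. I would do this through the projection/complementarity identity $\langle\lambda_\bfy^{k+1},\lambda^k+\mu_k\tg(x^{k+1},y^{k+1})\rangle=\|\lambda_\bfy^{k+1}\|^2$ (cf.\ \eqref{y-complim-ineq}), which lets me re‑express $\|\lambda^k\|^2-\|\lambda_\bfy^{k+1}\|^2$ via the cross terms $\langle\lambda^k,\lambda_\bfy^{k+1}\rangle$ and $\mu_k\langle\lambda_\bfy^{k+1},\tg(x^{k+1},y^{k+1})\rangle$, and then bound these inner products using the uniform multiplier estimate \eqref{lambday-bnd2} on $\|\lambda^k\|$ and the complementarity estimate in the spirit of \eqref{x-complim}. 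Carrying out this bookkeeping so that the residual matches the sharp constant in the stated bound is the delicate part; once it is in place, assembling the lower and upper directions and taking the maximum yields the claim.
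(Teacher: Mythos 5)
Your decomposition is exactly the paper's: the lower direction via Lemma~\ref{dual-bnd}(i),(iii), the multiplier bound $\|\lambda^*\|\leq G^{-1}L_\tf D_\bfy$, and the near-feasibility estimate \eqref{x-feas} is verbatim the paper's argument, and your identity for $\tf(x^{k+1},y^{k+1})-\tf^*(x^{k+1})$ in terms of $\Delta_k$, \eqref{p-ineq} and $\|\lambda_\bfy^{k+1}\|^2/(2\rho_k\mu_k)$ is an algebraic rearrangement of the chain of inequalities the paper runs on $\max_z\mcL(x^{k+1},y^{k+1},z,\lambda^k;\rho_k,\mu_k)$ before invoking \eqref{upperbnd-k} from Lemma~\ref{l-subp}. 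The genuine gap is your final step, and it stems from a misdiagnosis of the difficulty: the paper simply \emph{discards} the nonnegative term $\|\lambda_\bfy^{k+1}\|^2/(2\rho_k\mu_k)$ and bounds $\|\lambda^k\|^2/(2\rho_k\mu_k)$ directly via \eqref{lambday-bnd2}. This is not ``far too lossy'': although $\|\lambda^k\|$ is indeed $O(\rho_k)$, the residual carries the prefactor $(\rho_k\mu_k)^{-1}$, and with the algorithm's scaling $\rho_k=\epsilon_k^{-1}$, $\mu_k=\epsilon_k^{-3}$ one gets $\|\lambda^k\|^2/(2\rho_k\mu_k)=O(\rho_k/\mu_k)=O(\epsilon_k^2)$, which is negligible against the $O(\epsilon_k)$-level terms such as $\rho_k^{-1}(f_{\rm hi}-f_{\rm low})$. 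Your ``order $\rho_k^2$'' alarm overlooks this $\mu_k\gg\rho_k$ separation, which is precisely what the modified augmented Lagrangian scheme with separate parameters $\rho$ and $\mu$ is engineered to provide.

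Moreover, the cancellation route you leave unexecuted would not help even if carried out: writing $\|\lambda^k\|^2-\|\lambda_\bfy^{k+1}\|^2=\|\lambda^k\|^2-\langle\lambda_\bfy^{k+1},\lambda^k\rangle-\mu_k\langle\lambda_\bfy^{k+1},\tg(x^{k+1},y^{k+1})\rangle$ via the projection identity, each term on the right is itself $O(\rho_k^2)$ under \eqref{lambday-bnd1}, \eqref{lambday-bnd2} and the complementarity estimate \eqref{x-complim} (which gives $|\langle\lambda_\bfy^{k+1},\tg(x^{k+1},y^{k+1})\rangle|=O(\mu_k^{-1}\rho_k^2)$, hence $O(\rho_k^2)$ after multiplying by $\mu_k$), so this bookkeeping yields nothing beyond the trivial discard. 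As for matching ``the sharp constant'': note that the residual as literally stated, $\rho_k^{-1}\mu_k^{-1}\max\{\|\lambda^0\|,\,2G^{-1}(\epsilon_0+\rho_kL_\tf)D_\bfy\}/2$, has no square on the max, whereas \eqref{lambday-bnd2} bounds $\|\lambda^k\|$ rather than $\|\lambda^k\|^2$; the paper's own proof applies \eqref{lambday-bnd2} to $\|\lambda^k\|^2$ as though it were squared, so the constant you were straining to reproduce appears to carry a typographical omission rather than to require any cancellation. The correct completion of your argument is therefore short: drop $-\|\lambda_\bfy^{k+1}\|^2/(2\rho_k\mu_k)\leq0$, bound $\|\lambda^k\|^2/(2\rho_k\mu_k)$ by the square of the right-hand side of \eqref{lambday-bnd2}, and combine with \eqref{upperbnd-k} and $f(x^{k+1},y^{k+1})\geq f_{\rm low}$, exactly as the paper does.
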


\begin{proof}
Notice from \eqref{muk-1} and the proof of Lemma \ref{l-subdcnstr} that \eqref{lambday-bnd2} holds. Using this, \eqref{Lag}, \eqref{tLag}, \eqref{fhi} and \eqref{p-ineq}, we have
\begin{align*}
&\ \ \max_z\mcL(x^{k+1},y^{k+1},z,\lambda^k;\rho_k,\mu_k)\\
&\ \ \overset{\eqref{Lag}\eqref{tLag}}= f(x^{k+1},y^{k+1})+\rho_k \tf(x^{k+1},y^{k+1})+\frac{1}{2\mu_k}\|[\lambda^k+\mu_k\tg(x^{k+1},y^{k+1})]_+\|^2-\min_z\rho_k\tcL(x^{k+1},z,\lambda^k;\rho_k,\mu_k) \\ 
&\ \ \geq f(x^{k+1},y^{k+1})+\rho_k \tf(x^{k+1},y^{k+1})-\min_z\rho_k\tcL(x^{k+1},z,\lambda^k;\rho_k,\mu_k) \\ 
&\ \overset{\eqref{fhi}\eqref{p-ineq}}{\geq}f_{\rm low}+\rho_k\big(\tf(x^{k+1},y^{k+1})-\tf^*(x^{k+1})\big)-\frac{1}{2\mu_k}\|\lambda^k\|^2\\
&\quad \overset{\eqref{lambday-bnd2}}{\geq}f_{\rm low}+\rho_k\big(\tf(x^{k+1},y^{k+1})-\tf^*(x^{k+1})\big)-\mu_k^{-1}\max\{\|\lambda^0\|,\ 2G^{-1}(\epsilon_0+\rho_kL_\tf)D_\bfy\}/2.
\end{align*}
This together with \eqref{upperbnd-k} implies that
\begin{align}
\tf(x^{k+1},y^{k+1})-\tf^*(x^{k+1})\ \leq&\ \rho_k^{-1 }\Big(f_{\rm hi}-f_{\rm low}+\rho_k\epsilon_k+\frac{\epsilon_k D_\bfy}{4}+\frac{1}{2\mu_k}(L_k^{-1}\epsilon_k^2+4D_\bfy^2L_k)\Big)\nn\\
&+\rho_k^{-1}\mu_k^{-1}\max\{\|\lambda^0\|,\ 2G^{-1}(\epsilon_0+\rho_kL_\tf)D_\bfy\}/2.\label{f-gap}
\end{align}
On the other hand, let $\lambda^*\in\bR^l_+$ be an optimal Lagrangian multiplier of problem \eqref{tfstarx} with $x=x^{k+1}$. It then follows from Lemma \ref{dual-bnd}(i) that  $\|\lambda^*\|\leq G^{-1}L_\tf D_\bfy$. Using these, \eqref{tfstarx} and \eqref{x-feas}, we have
\begin{align*}
\tf^*(x^{k+1})=&\min_y\big\{\tf(x^{k+1},y)+\langle\lambda^*,\tg(x^{k+1},y)\rangle\big\}\leq\tf(x^{k+1},y^{k+1})+\langle\lambda^*,\tg(x^{k+1},y^{k+1})\rangle\\
\leq & \ \tf(x^{k+1},y^{k+1})+\|\lambda^*\|\|[\tg(x^{k+1},y^{k+1})]_+\|\leq\tf(x^{k+1},y^{k+1})+2\mu_k^{-1}G^{-2}L_\tf(\epsilon_0+L_f+\rho_kL_\tf)D_\bfy^2.
\end{align*}
The conclusion of this lemma then follows from this and \eqref{f-gap}.
\end{proof}

The following lemma provides an operation complexity of step 2 of Algorithm~\ref{AL-alg} for  problem \eqref{prob} with $\sigma=0$, i.e., $\tf_1(x,\cdot)$ being convex but not strongly convex for any given $x\in\dom\,f_2$.

\begin{lemma}\label{l-init}
Suppose that Assumption \ref{a1} holds with $\sigma=0$, i.e., $\tf_1(x,\cdot)$ being convex but not strongly convex for any given $x\in\dom\,f_2$. Let $\tL_k$, $D_\bfy$ and $\bbK$ be defined in \eqref{tLk}, \eqref{DxDy} and \eqref{def-K}, $\epsilon_k$ be given in Algorithm \ref{AL-alg}, and 
\beq\label{tNk}
N_k'=\left\lceil D_\bfy\sqrt{2\epsilon_k^{-1}\tL_k}\,\right\rceil.
\eeq
Then for all $0\leq k\in\bbK-1$, $y_{\rm init}^k$ satisfying \eqref{y-nf} is found at step 2 of  Algorithm \ref{AL-alg} by Algorithm \ref{opt} in no more than $N_k'$ evaluations of $\nabla \tf_1$, $\nabla\tg$ and the proximal operator of $\tf_2$, respectively.
\end{lemma}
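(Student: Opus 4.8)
The plan is to recognize the step-2 subproblem $\min_z \tcL(x^k,z,\lambda^k;\rho_k,\mu_k)$ as a convex composite minimization problem and then invoke the complexity guarantee for Algorithm \ref{opt} (Theorem \ref{thm-opt}) directly. First I would write $\tcL(x^k,\cdot,\lambda^k;\rho_k,\mu_k)=\phi+\tf_2$, where by \eqref{tLag}
\[
\phi(z)=\tf_1(x^k,z)+\frac{1}{2\rho_k\mu_k}\big\|[\lambda^k+\mu_k\tg(x^k,z)]_+\big\|^2.
\]
As recorded in the Remark following Algorithm \ref{AL-alg}, Assumption \ref{a1} together with the gradient bound \eqref{tghi} guarantees that $\phi$ is $\tL_k$-smooth on $\mcY$, with $\tL_k$ given in \eqref{tLk}, and $\sigma$-strongly convex; under the present hypothesis $\sigma=0$ this reduces to mere convexity. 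Since $\tf_2$ is proper closed convex and its proximal operator is exactly computable by Assumption \ref{a1}(ii), the subproblem is precisely of the convex composite form handled by Algorithm \ref{opt}.

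Next I would apply Theorem \ref{thm-opt} with the substitutions prescribed in step 2 of Algorithm \ref{AL-alg}, namely $\Psi\leftarrow\tcL(x^k,\cdot,\lambda^k;\rho_k,\mu_k)$, $L_{\nabla\phi}\leftarrow\tL_k$, $\sigma_\phi\leftarrow 0$, $\tilde\epsilon\leftarrow\epsilon_k$, and initial point $\tilde x^0\leftarrow y^k$. The accelerated proximal-gradient rate furnished by that theorem bounds the objective gap after $N$ iterations by a quantity of order $\tL_k\|\tilde x^0-z^*\|^2/(N+1)^2$, where $z^*$ denotes any minimizer. Because both $y^k$ and $z^*$ lie in $\mcY=\dom\,\tf_2$, the initial distance $\|y^k-z^*\|$ is bounded by the diameter $D_\bfy$ from \eqref{DxDy}. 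Forcing the resulting bound to be at most $\epsilon_k$ then requires $N$ of order $D_\bfy\sqrt{2\epsilon_k^{-1}\tL_k}$, so that $N_k'=\lceil D_\bfy\sqrt{2\epsilon_k^{-1}\tL_k}\,\rceil$ iterations suffice to produce $y_{\rm init}^k$ satisfying the accuracy requirement \eqref{y-nf}; equivalently, Theorem \ref{thm-opt} delivers this iteration count directly upon matching the parameters above.

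Finally I would convert the iteration count into an operation count. Each iteration of Algorithm \ref{opt} performs one gradient step on $\phi$ followed by one proximal step on $\tf_2$; computing $\nabla\phi$ uses exactly one evaluation of $\nabla\tf_1$ and one of $\nabla\tg$ through the chain rule on the squared-hinge penalty, while the proximal step uses one evaluation of $\prox_{\tf_2}$. Hence each of these three fundamental operations is performed at most $N_k'$ times, which is the claimed bound. I do not expect a genuine obstacle: the argument is a parameter-matched application of the cited Theorem \ref{thm-opt}, and the only points requiring care are (i) confirming that the smoothness modulus of $\phi$ is exactly $\tL_k$ rather than a larger quantity, which is the same squared-hinge smoothness estimate already carried out for the map $h$ in the proof of Lemma \ref{l-subp}, and (ii) using the compactness of $\mcY$ to replace the initial distance by the diameter $D_\bfy$ uniformly in $k$.
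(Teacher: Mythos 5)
Your proposal is correct and follows essentially the same route as the paper: decompose the step-2 subproblem as $\phi+\tf_2$ with $\phi(z)=\tf_1(x^k,z)+\|[\lambda^k+\mu_k\tg(x^k,z)]_+\|^2/(2\rho_k\mu_k)$, verify via Assumption \ref{a1} and \eqref{tghi} that $\phi$ is convex and $\tL_k$-smooth on $\dom\,\tf_2$, and invoke Theorem \ref{thm-opt} with $\tilde\epsilon=\epsilon_k$, $D_P=D_\bfy$, $L_{\nabla\phi}=\tL_k$ to get the iteration bound $N_k'$, then count one evaluation of $\nabla\tf_1$, $\nabla\tg$, and $\prox_{\tf_2}$ per iteration. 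Your brief re-derivation of the $\cO(\tL_k D_\bfy^2/N^2)$ rate is unnecessary but harmless, since Theorem \ref{thm-opt} already states the bound in terms of the diameter $D_P$.
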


\begin{proof}
 Notice from \eqref{tLag} and Algorithm \ref{AL-alg} that $y_{\rm init}^k$ satisfying \eqref{y-nf} is found by Algorithm \ref{opt} applied to the problem
\begin{equation*}
\min_y\left\{\tL(x^k,y,\lambda^k;\rho_k,\mu_k)=\phi(y)+P(y)\right\},
\end{equation*}
where $\phi(y)=\tf_1(x^k,y)+\|[\lambda^k+\mu_k\tg(x^k,y)]_+\|^2/(2\rho_k\mu_k)$ and $P(y)=\tf_2(y)$. By Assumption \ref{a1}, $\sigma=0$ and \eqref{tghi}, one can see that $\phi$ is convex but not strongly convex and $\tL_k$-smooth on $\dom\,P$, where $\tL_k$ is given in \eqref{tLk}. It then follows from this and Theorem \ref{thm-opt} (see Appendix \ref{appendix-A}) with $\tilde\epsilon=\epsilon_k$, $D_P=D_\bfy$ and  $L_{\nabla\phi}=\tL_k$  that Algorithm \ref{opt} finds $y_{\rm init}^k$ satisfying \eqref{y-nf} in no more than $N_k'$ iterations. Notice that each iteration of Algorithm \ref{opt} requires one evaluation of $\nabla\phi$ and the proximal operator of $P$, respectively. Hence, the conclusion of this lemma holds.
\end{proof}

We are now ready to prove Theorem \ref{complexity}.

\begin{proof}[\textbf{Proof of Theorem \ref{complexity}}]
(i) Observe from the definition of $K$ in \eqref{def-K} and $\epsilon_k=\epsilon_0\tau^k$ that $K$ is the smallest nonnegative integer such that $\epsilon_K\leq\varepsilon$. Hence, Algorithm \ref{AL-alg} terminates and outputs $(x^{K+1},y^{K+1})$ after $K+1$ outer iterations. Also, one can see from Algorithm \ref{AL-alg} that 
\beq\label{eps}
\rho_K=\epsilon_K^{-1},\quad\mu_K=\epsilon_K^{-3}, \quad  \eta_K=\epsilon_K.
\eeq 
Moreover, notice from the assumption of Theorem \ref{complexity} that $\varepsilon^{-2}-8\tau^{-2}G^{-2}\vartheta \geq 0$. It then follows from this and \eqref{eps} that
\begin{align*}
\rho_K^{-1}\mu_K=\epsilon_K^{-2} \geq \varepsilon^{-2} \geq  8\tau^{-2}G^{-2}\vartheta,
\end{align*}
which implies that \eqref{muk-1} holds for $k=K$. In addition, by \eqref{Lk}, \eqref{hL}, \eqref{ly-cnstr} and $\mu_k\geq\rho_k\geq1$, one has that for all $0\leq k\in\bbK-1$,
\begin{align}
& 2\mu_kL_\tg^2\leq L_k\overset{\eqref{Lk}}{=} L_{\nabla f_1}+2\rho_kL_{\nabla\tf_1}+2\mu_kL_\tg^2+2\mu_k\tg_{\rm hi}L_{\nabla \tg}+2\|\lambda^k\|L_{\nabla\tg}\nn\\
&\overset{\eqref{ly-cnstr}}{\leq} L_{\nabla f_1}+2\rho_kL_{\nabla\tf_1}+2\mu_kL_\tg^2+2\mu_k\tg_{\rm hi}L_{\nabla \tg}+2\sqrt{2\rho_k\mu_k\vartheta}L_{\nabla\tg}\leq\mu_kL.\label{L-ineq}
\end{align}
It then follows from $\epsilon_K\leq\varepsilon$, \eqref{eps} and Lemmas \ref{l-subdcnstr} and \ref{l-fgap} that \eqref{kkt1}-\eqref{kkt7} hold, which proves statement (i) of Theorem \ref{complexity}. 

(ii) Let $K$ and $N$ be given in \eqref{def-K} and \eqref{def-N}. Recall from Lemmas \ref{l-subp} and \ref{l-init} that the number of evaluations of $\nabla f_1$, $\nabla \tf_1$, $\nabla \tg$, proximal operators of $f_2$ and $\tf_2$ performed by Algorithms \ref{opt} and \ref{mmax-alg2} at iteration $k$ of Algorithm~\ref{AL-alg} is at most $N_k+N_k'$, where $N_k$ and $N_k'$ are given in \eqref{mmax-N} and \eqref{tNk}, respectively. By this and statement (i) of this theorem, one can observe that the total number of evaluations of $\nabla f_1$, $\nabla \tf_1$, $\nabla \tg$ and proximal operators of $f_2$ and $\tf_2$ performed in Algorithm~\ref{AL-alg} is no more than $\sum_{k=0}^K(N_k+N_k')$, respectively. As a result, to prove statement (ii) of this theorem, it suffices to show that $\sum_{k=0}^K(N_k+N_k')\leq N$. 

To this end, using $\mu_k\geq1\geq\epsilon_k$, \eqref{ho}, \eqref{hM}, \eqref{hT}, \eqref{mmax-omega}, \eqref{mmax-zeta}, \eqref{mmax-Mk}, \eqref{mmax-Tk} and \eqref{L-ineq}, we obtain that
\begin{align}
&1\geq\alpha_k\geq\min\left\{1,\sqrt{4\epsilon_k/(\mu_kD_\bfy L)}\right\}\geq\epsilon_k^{1/2}\mu_k^{-1/2}\alpha, \label{alpha-ineq}\\
&\delta_k\leq(2+\epsilon_k^{-1/2}\mu_k^{1/2}\alpha^{-1})\mu_kL (D_\bfx^2+D_\bfy^2)+\max\{1/D_\bfy,\mu_kL/4\}D_\bfy^2\leq\epsilon_k^{-1/2}\mu_k^{3/2}\delta, \label{delta-ineq}\\
&M_k\leq\frac{16\max\left\{1/(4\mu_kL_\tg^2),2/(\epsilon_k^{1/2}\mu_k^{-1/2}\alpha\mu_kL_\tg^2)\right\}\mu_k}{\left[(3\mu_kL+1/(2D_\bfy))^2/\min\{2\mu_kL_\tg^2,\epsilon_k/(2D_\bfy)\}+3\mu_kL+1/(2D_\bfy)\right]^{-2}\epsilon_k^2}\times\Bigg(\epsilon_k^{-1/2}\mu_k^{3/2}\delta \nn \\
&\qquad \ \ +2\epsilon_k^{-1/2}\mu_k^{1/2}\alpha^{-1}\Big(f^*-f_{\rm low}+\rho_k(\tf^*_{\rm hi}-\tf_{\rm low})+\rho_kL_\tf D_\bfy+3\rho_k\vartheta+\mu_k\tg_{\rm hi}^2+\frac{D_\bfy}{4}+\mu_kL(D_\bfx^2+D_\bfy^2)\Big)\Bigg) \label{M-ineq1} \\
&\leq \frac{16\epsilon_k^{-1/2}\mu_k^{-1/2}\max\left\{1/(4L_\tg^2),2/(\alpha L_\tg^2)\right\}\mu_k}{\epsilon_k^2\mu_k^{-4}\left[(3L+1/(2D_\bfy))^2/\min\{2L_\tg^2,1/(2D_\bfy)\}+3L+1/(2D_\bfy)\right]^{-2}\epsilon_k^2}\times(\epsilon_k^{-1/2}\mu_k^{3/2}) \nn \\
& \quad  \times \Bigg(\delta+2\alpha^{-1}\Big(f^*-f_{\rm low}+\tf^*_{\rm hi}-\tf_{\rm low}+L_\tf D_\bfy+3\vartheta+\tg_{\rm hi}^2+\frac{D_\bfy}{4}+L(D_\bfx^2+D_\bfy^2)\Big)\Bigg) \ = \ \epsilon_k^{-5}\mu_k^6M, \label{M-ineq2} \\
&T_k\leq\Bigg\lceil16\left(f_{\rm hi}-f_{\rm low}+\rho_k\epsilon_k+\frac{D_\bfy}{4}\right)\epsilon_k^{-2}\mu_kL+8(1+4D_\bfy^2\mu_k^2L^2\epsilon_k^{-2})\mu_k^{-1}-1\Bigg\rceil_+\leq\epsilon_k^{-2}\mu_kT, \label{T-ineq} 
\end{align}
where \eqref{alpha-ineq} follows from \eqref{ho}, \eqref{mmax-omega} and \eqref{L-ineq};  \eqref{delta-ineq} is due to \eqref{ho}, \eqref{mmax-zeta}, \eqref{alpha-ineq} and $\mu_k\geq1\geq\epsilon_k$; \eqref{M-ineq1} is due to \eqref{mmax-Mk}, \eqref{L-ineq}, \eqref{alpha-ineq}, \eqref{delta-ineq} and $\epsilon_k\in (0,1]$; \eqref{M-ineq2} follows from 
$\mu_k\geq\rho_k\geq1\geq\epsilon_k$ and \eqref{hM}; and \eqref{T-ineq} is due to \eqref{L-ineq}, \eqref{hT} and the fact that $\epsilon_k\in (0,1]$ and $\rho_k\epsilon_k=1$. By the above inequalities, \eqref{mmax-N}, \eqref{L-ineq}, $T>1$ and $\mu_k\geq1\geq\epsilon_k$, one has
\begin{align}
&\sum_{k=0}^KN_k\leq \sum_{k=0}^K \left(\left\lceil96\sqrt{2}\left(1+\left(24\mu_kL+4/D_\bfy\right)/(2\mu_kL_\tg^2)\right)\right\rceil+2\right)\max\left\{2,\sqrt{D_\bfy\mu_kL\epsilon_k^{-1}}\right\}\nn\\
&\qquad\qquad\, \times\left((\epsilon_k^{-2}\mu_kT+1)(\log (\epsilon_k^{-5}\mu_k^6M))_++\epsilon_k^{-2}\mu_kT+1+2\epsilon_k^{-2}\mu_kT\log(\epsilon_k^{-2}\mu_kT+1) \right)\nn\\
&\leq\sum_{k=0}^K\left(\left\lceil96\sqrt{2}\left(1+\left(12L+2/D_\bfy\right)/L_\tg^2\right)\right\rceil+2\right)\max\left\{2,\sqrt{D_\bfy L}\right\}\epsilon_k^{-1/2}\mu_k^{1/2}\nn\\
&\quad \times\epsilon_k^{-2}\mu_k\left((T+1)(\log (\epsilon_k^{-5}\mu_k^6M))_++T+1+2T\log(\epsilon_k^{-2}\mu_kT+1) \right)\nn\\
&\leq\sum_{k=0}^K\left(\left\lceil96\sqrt{2}\left(1+\left(12L+2/D_\bfy\right)/L_\tg^2\right)\right\rceil+2\right)\max\left\{2,\sqrt{D_\bfy L}\right\}\nn\\
&\quad \times\epsilon_k^{-5/2}\mu_k^{3/2}T\left(2(\log (\epsilon_k^{-5}\mu_k^6M))_++2+2\log(2\epsilon_k^{-2}\mu_kT) \right)\nn\\
&\leq\sum_{k=0}^K\left(\left\lceil96\sqrt{2}\left(1+\left(12L+2/D_\bfy\right)/L_\tg^2\right)\right\rceil+2\right)\max\left\{2,\sqrt{D_\bfy L}\right\}T\nn\\
&\quad\times\epsilon_k^{-5/2}\mu_k^{3/2}\left(14\log\mu_k-14\log\epsilon_k+2(\log M)_++2+2\log(2T) \right),\label{sum-N}
\end{align}
where the first inequality follows from $\epsilon_k\in (0,1]$,  \eqref{mmax-N}, \eqref{L-ineq}, \eqref{M-ineq2} and \eqref{T-ineq}, and the second and third inequalities are due to the fact that $\mu_k\geq1\geq\epsilon_k$ and $T>1$. 
By the definition of $K$ in \eqref{def-K}, one has $\tau^K\geq\tau\varepsilon/\epsilon_0$. Also, notice from Algorithm \ref{AL-alg} that $\mu_k=\epsilon_k^{-3}=(\epsilon_0\tau^k)^{-3}$. It then follows from these and \eqref{sum-N} that
\begin{align}
&\sum_{k=0}^KN_k\leq\sum_{k=0}^K\Big(\big\lceil96\sqrt{2}\left(1+\left(12L+2/D_\bfy \right)/L_\tg^2\right)\big\rceil+2\Big)\max\Big\{2,\sqrt{D_\bfy L}\Big\}T\nn\\
&\qquad\qquad\,\times\epsilon_k^{-7}\left(56\log(1/\epsilon_k)+2(\log M)_++2+2\log(2T) \right)\nn\\
&= \left(\big\lceil96\sqrt{2}\left(1+\left(12L+2/D_\bfy \right)/L_\tg^2\right)\big\rceil+2\right)\max\left\{2,\sqrt{D_\bfy L}\right\}T\nn\\
&\quad \times\sum_{k=0}^K\epsilon_0^{-7}\tau^{-7k}\left(56k\log(1/\tau)+56\log(1/\epsilon_0)+2(\log M)_++2+2\log(2T) \right)\nn\\
&\leq \left(\big\lceil96\sqrt{2}\left(1+\left(12L+2/D_\bfy \right)/L_\tg^2\right)\big\rceil+2\right)\max\left\{2,\sqrt{D_\bfy L}\right\}T\nn\\
&\quad\times\sum_{k=0}^K\epsilon_0^{-7}\tau^{-7k}\left(56K\log(1/\tau)+56\log(1/\epsilon_0)+2(\log M)_++2+2\log(2T) \right)\nn\\
&\leq \left(\big\lceil96\sqrt{2}\left(1+\left(12L+2/D_\bfy \right)/L_\tg^2\right)\big\rceil+2\right)\max\Big\{2,\sqrt{D_\bfy L}\Big\}T\epsilon_0^{-7}\nn\\
&\quad \times\tau^{-7K}(1-\tau^7)^{-1}\left(56K\log(1/\tau)+56\log(1/\epsilon_0)+2(\log M)_++2+2\log(2T) \right)\nn\\
&\leq \left(\big\lceil96\sqrt{2}\left(1+\left(12L+2/D_\bfy \right)/L_\tg^2\right)\big\rceil+2\right)\max\left\{2,\sqrt{D_\bfy L}\right\}T\epsilon_0^{-7}(1-\tau^7)^{-1}\nn\\
&\quad\times(\tau\varepsilon/\epsilon_0)^{-7}\left(56K\log(1/\tau)+56\log(1/\epsilon_0)+2(\log M)_++2+2\log(2T) \right),\label{part1}
\end{align}
where the second last inequality is due to $\sum_{k=0}^K\tau^{-7k}\leq \tau^{-7K}/(1-\tau^7)$, and the last inequality follows from $\tau^K\geq\tau\varepsilon/\epsilon_0$.

In addition, observe from \eqref{tLk}, \eqref{hL}, \eqref{ly-cnstr} and $\rho_k^{-1}\mu_k\geq1$, one has that for all $0\leq k\in\bbK-1$,
\begin{equation*}
\tL_k= L_{\nabla\tf_1}+\rho_k^{-1}(\mu_kL_\tg^2+\mu_k\tg_{\rm hi}L_{\nabla \tg}+\|\lambda^k\|L_{\nabla\tg})\leq L_{\nabla\tf_1}+\rho_k^{-1}(\mu_kL_\tg^2+\mu_k\tg_{\rm hi}L_{\nabla \tg}+\sqrt{2\rho_k\mu_k\vartheta}L_{\nabla\tg})\leq\rho_k^{-1}\mu_k\tL.
\end{equation*}
Using this, \eqref{tNk}, $\epsilon_k=\epsilon_0\tau^k$, $\rho_k=\epsilon_k^{-1}$, and $\mu_k=\epsilon_k^{-3}$, we have
\begin{align*}
&\sum_{k=1}^KN_k'\leq\sum_{k=1}^KD_\bfy\sqrt{2\mu_k(\rho_k\epsilon_k)^{-1}\tL}+K=\sum_{k=1}^K\epsilon_k^{-3/2}D_\bfy\sqrt{2\tL}+K=\sum_{k=1}^K\epsilon_0^{-3/2}\tau^{-3k/2}D_\bfy\sqrt{2\tL}+K\\
&\leq\epsilon_0^{-3/2}\tau^{-3K/2}(1-\tau^{3/2})^{-1}D_\bfy\sqrt{2\tL}+K\leq\epsilon_0^{-3/2}(\tau\varepsilon/\epsilon_0)^{-3/2}(1-\tau^{3/2})^{-1}D_\bfy\sqrt{2\tL}+K,
\end{align*}
where the second last inequality is due to $\sum_{k=0}^K\tau^{-3k/2}\leq \tau^{-3K/2}/(1-\tau^{3/2})$, and the last inequality follows from $\tau^K\geq\tau\varepsilon/\epsilon_0$.
This together with \eqref{def-N} and \eqref{part1} implies that $\sum_{k=1}^K(N_k+N_k')\leq N$. Hence, statement (ii) of Theorem \ref{complexity} holds.
\end{proof}

In the remainder of this section, we first establish several lemmas and then use them to prove Theorem \ref{complexity-str}. In particular, the following lemma provides an operation complexity of step 3 of Algorithm~\ref{AL-alg} for  problem \eqref{prob} with $\sigma>0$, i.e., $\tf_1(x,\cdot)$ being strongly convex with parameter $\sigma$ for any given $x\in\dom\,f_2$.

\begin{lemma}\label{l-subp-str}
Suppose that Assumption \ref{a1} holds with $\sigma>0$, i.e., $\tf_1(x,\cdot)$ being strongly convex with parameter $\sigma$ for any given $x\in\dom\,f_2$.  Let $f^*$, $L_k$, $\tf^*_{\rm hi}$, $D_\bfx$, $D_\bfy$, $f_{\rm hi}$, $f_{\rm low}$ $\tf_{\rm low}$, $\tg_{\rm hi}$, $\bbK$ and $\vartheta$ be defined in \eqref{prob}, \eqref{Lk}, \eqref{def-tFx}, \eqref{DxDy}, \eqref{fhi}, \eqref{tfhi}, \eqref{def-K} and \eqref{ht}, $L_\tf$ and $\sigma$ be given in Assumption \ref{a1}, $\epsilon_k$, $\rho_k$ and $\mu_k$ be given in Algorithm \ref{AL-alg}, and
\begin{align}
\tilde\alpha_k=&\ \min\left\{1,\sqrt{8\sigma\rho_k/L_k}\right\},\label{mmax-omega-str}\\
\tilde\delta_k=&\ (2+\tilde\alpha_k^{-1})(D_\bfx^2+D_\bfy^2)L_k +\max\left\{2\sigma\rho_k,\tilde\alpha_k L_k/4\right\}D_\bfy^2,\label{mmax-zeta-str}\\
\widetilde M_k=&\ \frac{16\max\left\{1/(2L_k),\min\left\{1/(2\sigma\rho_k),4/(\tilde\alpha_k L_k)\right\}\right\}}{\left[9L_k^2/\min\{L_k,\sigma\rho_k\}+ 3L_k\right]^{-2}\epsilon_k^2}\nn\\
&\times\left(\tilde\delta_k+2\tilde\alpha_k^{-1}\left(f^*-f_{\rm low}+\rho_k(\tf^*_{\rm hi}-\tf_{\rm low})+\rho_kL_\tf D_\bfy+3\rho_k\vartheta+\mu_k\tg_{\rm hi}^2+L_k(D_\bfx^2+D_\bfy^2)\right)\right),\label{mmax-Mk-str}\\
\widetilde T_k=&\ \left\lceil16\left(f_{\rm hi}-f_{\rm low}+\rho_k\epsilon_k\right) L_k\epsilon_k^{-2}+8\sigma^{-2}\rho_k^{-2}L_k^2+7\right\rceil_+,\label{mmax-Tk-str}\\
\widetilde N_k=&\ 3397\max\left\{2,\sqrt{L_k/(2\sigma\rho_k)}\right\}\left((\widetilde T_k+1)(\log \widetilde M_k)_++\widetilde T_k+1+2\widetilde T_k\log(\widetilde T_k+1) \right).\label{mmax-N-str}
\end{align}
Then for all $0\leq k\in\bbK-1$, an $\epsilon_k$-primal-dual stationary point $(x^{k+1},y^{k+1},z^{k+1})$ of problem \eqref{AL-sub} is successfully found at step 3 of Algorithm~\ref{AL-alg} that satisfies 
\begin{align}
\max_z\mcL(x^{k+1},y^{k+1},z,\lambda^k;\rho_k,\mu_k)\leq&\  f_{\rm hi}+\rho_k\epsilon_k+\frac{1}{2\epsilon_k^2}\left(L_k^{-1}+\sigma^{-2}\rho_k^{-2}L_k\right).\label{upperbnd-k-str}
\end{align}
Moreover, the total number of evaluations of $\nabla f_1$, $\nabla \tf_1$, $\nabla \tg$ and proximal operators of $f_2$ and $\tf_2$ performed at step 3 in iteration $k$ of Algorithm~\ref{AL-alg} is no more than $\widetilde N_k$, respectively.
\end{lemma}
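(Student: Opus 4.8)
The plan is to follow the same template as the proof of Lemma \ref{l-subp}, replacing the artificial regularization used there (which was needed precisely because $h(x,y,\cdot)$ was only concave when $\sigma=0$) with the genuine strong concavity now available. First I would recast problem \eqref{AL-sub} as $\min_{x,y}\max_z\{h(x,y,z)+p(x,y)-q(z)\}$ using exactly the same $h$, $p$, $q$ as in the proof of Lemma \ref{l-subp}, and verify via \eqref{tghi} and Assumption \ref{a1} that $h$ is $L_k$-smooth on $\mcX\times\mcY\times\mcY$; this step is identical to the $\sigma=0$ case. The only new structural ingredient is that, since $\tf_1(x,\cdot)$ is now $\sigma$-strongly convex and the term $-\|[\lambda^k+\mu_k\tg(x,z)]_+\|^2/(2\mu_k)$ is concave in $z$, the map $h(x,y,\cdot)$ is $\rho_k\sigma$-strongly concave. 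With $\sigma_y=\rho_k\sigma>0$ in hand, Theorem \ref{mmax-thm} applies in its strongly-concave branch and guarantees that Algorithm \ref{mmax-alg2} returns an $\epsilon_k$-primal-dual stationary point $(x^{k+1},y^{k+1},z^{k+1})$ of \eqref{AL-sub}.

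Next I would reuse the three function-value estimates \eqref{e1}, \eqref{e2} and \eqref{e3}, together with the warm-start bound \eqref{init-bnd}, all of which carry over verbatim: their derivations rely only on the Lipschitz continuity and compactness in Assumption \ref{a1}, the convexity of $\tcL(x,\cdot)$, and the bound \eqref{ly-cnstr}, none of which use $\sigma=0$. In particular \eqref{init-bnd} continues to hold because step 2 of Algorithm \ref{AL-alg} (now calling Algorithm \ref{opt-str}) still delivers $y_{\rm init}^k$ satisfying \eqref{y-nf}. Consequently, with $H$, $H^*$, $H_{\rm low}$ defined as in \eqref{Hk1}--\eqref{Hk2}, I obtain the same sandwich $f_{\rm low}\le H^*\le f^*+\rho_k(\tf^*_{\rm hi}-\tf_{\rm low})+\rho_k\vartheta$, $H_{\rm low}\ge f_{\rm low}-\rho_kL_\tf D_\bfy-2\rho_k\vartheta-\mu_k\tg_{\rm hi}^2$, and $\max_z H(x^k,y_{\rm init}^k,z)\le f_{\rm hi}+\rho_k\epsilon_k$.

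Finally I would invoke Theorem \ref{mmax-thm} with the strongly-concave instantiation of its parameters: $\hat x^0=(x^k,y_{\rm init}^k)$, $\epsilon=\epsilon_k$, $\hat\epsilon_0=\epsilon_k/2$, $L_{\nabla h}=L_k$, $\hat\sigma_y=\sigma_y=\rho_k\sigma$, $\widehat L=3L_k$, $\hat\alpha=\tilde\alpha_k$, $\hat\delta=\tilde\delta_k$, $D_p=\sqrt{D_\bfx^2+D_\bfy^2}$, $D_q=D_\bfy$, together with the function-value bounds recorded above. Matching these against the definitions \eqref{mmax-omega-str}--\eqref{mmax-N-str} then yields the estimate \eqref{upperbnd-k-str} and the operation-complexity bound $\widetilde N_k$. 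Relative to the $\sigma=0$ analysis, the crucial change is that the effective strong-concavity parameter $\hat\sigma_y$---which there was the artificial regularization $\epsilon_k/(2D_\bfy)$---is now the genuine $\rho_k\sigma$; consequently no regularization needs to be added to the smoothness (so $\widehat L=3L_k$ rather than $3L_k+\epsilon_k/(2D_\bfy)$), the term $\tfrac14\epsilon_k D_\bfy$ present in \eqref{upperbnd-k} disappears, and the condition-number factor in the iteration count becomes $\sqrt{L_k/(2\sigma\rho_k)}$ in place of $\sqrt{D_\bfy L_k\epsilon_k^{-1}}$.

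The main obstacle is this last step: tracking the constants so that Theorem \ref{mmax-thm}'s generic guarantees collapse exactly to \eqref{upperbnd-k-str} and to the stated $\tilde\alpha_k,\tilde\delta_k,\widetilde M_k,\widetilde T_k,\widetilde N_k$. In particular, one must check that the residual controlling $\max_z H(x^{k+1},y^{k+1},z)$ beyond $f_{\rm hi}+\rho_k\epsilon_k$ simplifies to $\tfrac{1}{2\epsilon_k^2}(L_k^{-1}+\sigma^{-2}\rho_k^{-2}L_k)$ once $\sigma_y=\rho_k\sigma$ and $\hat\epsilon_0=\epsilon_k/2$ are substituted. This is pure bookkeeping, but it must be done carefully because the tolerance scaling in the strongly-concave regime differs from the regularized $\sigma=0$ estimate, and the factor $\sigma^{-2}\rho_k^{-2}$ reflects the dependence of the final-iterate gap on the genuine strong-concavity modulus rather than on $\mu_k$.
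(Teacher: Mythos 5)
Your proposal is correct and follows essentially the same route as the paper's own proof: the same decomposition of \eqref{AL-sub} into $h+p-q$, the same $L_k$-smoothness and $\sigma\rho_k$-strong-concavity verification, the verbatim reuse of the bounds \eqref{e1}--\eqref{e3} and \eqref{init-bnd} in their $\sigma>0$ form, and the same invocation of Theorem \ref{mmax-thm} with $\hat\epsilon_0=\epsilon_k/2$, $\hat\sigma_y=\sigma_y=\sigma\rho_k$, $\widehat L=3L_k$, $\hat\alpha=\tilde\alpha_k$, $\hat\delta=\tilde\delta_k$, $D_p=\sqrt{D_\bfx^2+D_\bfy^2}$, $D_q=D_\bfy$. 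Your bookkeeping remarks are also accurate (indeed the exact residual from Theorem \ref{mmax-thm} is $\tfrac{\epsilon_k^2}{2}\left(L_k^{-1}+\sigma^{-2}\rho_k^{-2}L_k\right)$, which is dominated by the stated $\tfrac{1}{2\epsilon_k^2}\left(L_k^{-1}+\sigma^{-2}\rho_k^{-2}L_k\right)$ since $\epsilon_k\in(0,1]$), and your parameter substitutions are in fact more careful than the paper's own text, which contains minor typos at this point.
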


\begin{proof}
Observe from \eqref{Lag} and Assumption \ref{a1} that problem \eqref{AL-sub} can be viewed as
\[
\min_{x,y}\max_z\{h(x,y,z)+p(x,y)-q(z)\}
\]
with
\begin{align*}
&h(x,y,z)=f_1(x,y)+\rho_k\tf_1(x,y)+\frac{1}{2\mu_k}\|[\lambda^k+\mu_k \tg(x,y)]_+\|^2-\rho_k \tf_1(x,z)-\frac{1}{2\mu_k}\|[\lambda^k+\mu_k \tg(x,z)]_+\|^2,\nn\\
&p(x,y)=f_2(x)+\rho_k\tf_2(y),\quad q(z)=\rho_k\tf_2(z).
\end{align*}
By \eqref{tghi} and Assumption \ref{a1}, it can be verified that $\|[\lambda^k+\mu_k\tg (x,y)]_+\|^2/(2\mu_k)$ and $\|[\lambda^k+\mu_k\tg (x,z)]_+\|^2/(2\mu_k)$ are both $(\mu_kL_\tg^2+\mu_k\tg_{\rm hi}L_{\nabla\tg}+\|\lambda^k\| L_{\nabla\tg})$-smooth on $\mcX\times\mcY$. Using this,  Assumption \ref{a1} with $\sigma>0$, and the fact that $f_1$ and $\tf_1$ are respectively $L_{\nabla f_1}$- and $L_{\nabla\tf_1}$-smooth on $\mcX\times\mcY$, we can see that $h(x,y,\cdot)$ is $\sigma\rho_k$-strongly-concave on $\mcY$, and  $h(x,y,z)$ is $L_k$-smooth on $\mcX\times\mcY\times\mcY$ for all $0\leq k\in\bbK-1$, where $L_k$ is given in \eqref{Lk}. Consequently, it follows from Theorem \ref{mmax-thm} (see Appendix \ref{appendix-B}) that an $\epsilon_k$-primal-dual stationary point $(x^{k+1},y^{k+1},z^{k+1})$ of problem \eqref{AL-sub} is successfully found by Algorithm \ref{mmax-alg2} at step 3 of Algorithm~\ref{AL-alg}.

In addition, by \eqref{Lag}, \eqref{tLag} and \eqref{fhi}, one has
\begin{align}
\min_{x,y}\max_z\mcL(x,y,z,\lambda^k;\rho_k,\mu_k)\overset{\eqref{Lag}\eqref{tLag}}{=}&\min_{x,y}\left\{f(x,y)+\rho_k\tL(x,y,\lambda^k;\rho_k,\mu_k)-\min_z\rho_k\tL(x,z,\lambda^k;\rho_k,\mu_k)\right\}\nn\\
\geq\ \ &\min_{(x,y)\in\mcX\times\mcY}f(x,y)\overset{\eqref{fhi}}{=} f_{\rm low}.\label{e1-str}
\end{align}
Let $(x^*,y^*)$ be an optimal solution of \eqref{prob}. It then follows that $f(x^*,y^*)=f^*$, $\tf(x^*,y^*)=\tf^*(x^*)$ and $\tg(x^*,y^*)\leq0$, where $f^*$ and $\tf^*$ are defined in \eqref{prob} and \eqref{tfstarx}, respectively. Using these, \eqref{Lag}, \eqref{tLag}, \eqref{def-tFx}, \eqref{tfhi} and \eqref{ly-cnstr}, we obtain that
\begin{align}
&\ \ \min_{x,y}\max_z\mcL(x,y,z,\lambda^k;\rho_k,\mu_k)\leq\max_z\mcL(x^*,y^*,z,\lambda^k;\rho_k,\mu_k)\nn\\
&\ \ \overset{\eqref{Lag}\eqref{tLag}}{=}f(x^*,y^*)+\rho_k\tf(x^*,y^*)+\frac{1}{2\mu_k}\|[\lambda^k+\mu_k\tg(x^*,y^*)]_+\|^2-\min_z\rho_k\tL(x^*,z,\lambda^k;\rho_k,\mu_k)\nn\\
&\quad\, \leq f^*+\rho_k\tf^*(x^*)+\frac{1}{2\mu_k}\|\lambda^k\|^2-\min_z\left\{\rho_k\tf(x^*,z)+\frac{1}{2\mu_k}\|[\lambda^k+\mu_k\tg(x^*,z)]_+\|^2\right\}\nn\\
&\ \overset{\eqref{def-tFx}\eqref{tfhi}}{\leq} f^*+\rho_k(\tf^*_{\rm hi}-\tf_{\rm low})+\frac{1}{2\mu_k}\|\lambda^k\|^2\overset{\eqref{ly-cnstr}}{\leq} f^*+\rho_k(\tf^*_{\rm hi}-\tf_{\rm low})+\rho_k\vartheta,\label{e2-str}
\end{align}
where the second inequality is due to $\tf(x^*,y^*)=\tf^*(x^*)$, $\tg(x^*,y^*)\leq0$ and \eqref{tLag}. Also, by \eqref{Lag}, \eqref{DxDy}, \eqref{fhi}, \eqref{tfhi} and \eqref{ly-cnstr}, one has
\begin{align}
&\min_{(x,y,z)\in\mcX\times\mcY\times\mcY}\mcL(x,y,z,\lambda^k;\rho_k,\mu_k)\nn\\
&\overset{\eqref{Lag}}{\geq}\min_{(x,y,z)\in\mcX\times\mcY\times\mcY}\left\{f(x,y)+\rho_k(\tf(x,y)-\tf(x,z))-\frac{1}{2\mu_k}\|[\lambda^k+\mu_k\tg(x,z)]_+\|^2\right\}\nn\\
&\ \geq\min_{(x,y,z)\in\mcX\times\mcY\times\mcY}\left\{f(x,y)-\rho_kL_\tf\|y-z\|-\frac{1}{2\mu_k}\left(\|\lambda^k\|+\mu_k\|[\tg(x,z)]_+\|\right)^2\right\}\nn\\
&\ \geq\min_{(x,y,z)\in\mcX\times\mcY\times\mcY}\left\{f(x,y)-\rho_kL_\tf\|y-z\|-\frac{1}{\mu_k}\|\lambda^k\|^2-\mu_k\|[\tg(x,z)]_+\|^2\right\}\nn\\
&\ \geq f_{\rm low}-\rho_kL_\tf D_\bfy-2\rho_k\vartheta-\mu_k\tg_{\rm hi}^2,\label{e3-str}
\end{align}
where the second inequality is due to $\lambda^k\in\bR_+^l$ and $L_\tf$-Lipschitz continuity of $\tf$ (see Assumption \ref{a1}(i)), and the last inequality is due to \eqref{DxDy}, \eqref{fhi}, \eqref{tfhi} and \eqref{ly-cnstr}. Notice from step 2 of Algorithm~\ref{AL-alg} that $y_{\rm init}^k$ is an approximate solution of $\min_z\tL(x^{k},z,\lambda^k;\rho_k,\mu_k)$ satisfying \eqref{y-nf}. It then follows from \eqref{Lag}, \eqref{tLag}, \eqref{y-nf} and \eqref{fhi} that
\begin{align}
\max_z\mcL(x^k,y_{\rm init}^k,z,\lambda^k;\rho_k,\mu_k)\overset{\eqref{Lag}\eqref{tLag}}=&f(x^k,y_{\rm init}^k)+\rho_k\left(\tcL(x^k,y_{\rm init}^k,\lambda^k;\rho_k,\mu_k)-\min_z\tcL(x^k,z,\lambda^k;\rho_k,\mu_k)\right)\nn\\
\overset{\eqref{y-nf}}{\leq}\ &f(x^k,y_{\rm init}^k)+\rho_k\epsilon_k\overset{\eqref{fhi}}{\leq} f_{\rm hi}+\rho_k\epsilon_k.\label{init-bnd-str}
\end{align}

To complete the rest of the proof, let
\begin{align}
&H(x,y,z)=\mcL(x,y,z,\lambda^k;\rho_k,\mu_k),\quad H^*=\min_{x,y}\max_z\mcL(x,y,z,\lambda^k;\rho_k,\mu_k),\label{Hk1-str}\\
&H_{\rm low}=\min\left\{\mcL(x,y,z,\lambda^k;\rho_k,\mu_k)|(x,y,z)\in\mcX\times\mcY\times\mcY\right\}.\label{Hk2-str}
\end{align}
In view of these, \eqref{e1-str}, \eqref{e2-str}, \eqref{e3-str} and \eqref{init-bnd-str}, we obtain that
\begin{align*}
&\max_zH(x^k,y_{\rm init}^k,z)\overset{\eqref{init-bnd-str}}{\leq} f_{\rm hi}+\rho_k\epsilon_k,\\
& f_{\rm low}\overset{\eqref{e1-str}}{\leq} H^*\overset{\eqref{e2}}{\leq} f^*+\rho_k(\tf^*_{\rm hi}-\tf_{\rm low})+\rho_k\vartheta,\\
& H_{\rm low}\overset{\eqref{e3-str}}{\geq} f_{\rm low}-\rho_kL_\tf D_\bfy-2\rho_k\vartheta-\mu_k\tg_{\rm hi}^2.
\end{align*}
Using these and Theorem \ref{mmax-thm} (see Appendix \ref{appendix-B}) with $\hat x^0=(x^k,y_{\rm init}^k)$, $\epsilon=\epsilon_k$, $\hat\epsilon_0=\epsilon_k/2$, $\hat\sigma_y=\sigma_y=\sigma\rho_k$, $L_{\nabla\bh}=L_k$, $\widehat L=3L_k$, $\hat\alpha=\alpha_k$, $\hat\delta=\delta_k$, $D_p=\sqrt{D_\bfx^2+D_\bfy^2}$, $D_q=D_\bfy$, and $\bH$, $\bH^*$, $\bH_{\rm low}$ given in \eqref{Hk1-str} and \eqref{Hk2-str}, we can conclude that 
the $\epsilon_k$-primal-dual stationary point $(x^{k+1},y^{k+1},z^{k+1})$ of problem \eqref{AL-sub} found at step 3 of Algorithm~\ref{AL-alg}  satisfies \eqref{upperbnd-k}. Moreover, the total number of evaluations of $\nabla f_1$, $\nabla \tf_1$, $\nabla \tg$ and proximal operators of $f_2$ and $\tf_2$ performed by Algorithm \ref{mmax-alg2} at step 3 of Algorithm~\ref{AL-alg}  is no more than $\widetilde N_k$, respectively.
\end{proof}

The next lemma presents an upper bound on the optimality violation of $y^{k+1}$ for the lower-level problem of \eqref{prob} when $\sigma>0$ and $x=x^{k+1}$.

\begin{lemma}\label{l-fgap-str}
Suppose that Assumptions \ref{a1} and \ref{a2} hold with $\sigma>0$, i.e., $\tf_1(x,\cdot)$ is strongly convex with parameter $\sigma$ for any given $x\in\dom\ f_2$. Let $\tf^*$, $L_k$, $D_\bfy$, $f_{\rm hi}$, $f_{\rm low}$ and $\bbK$ be defined in \eqref{tfstarx}, \eqref{Lk}, \eqref{DxDy}, \eqref{fhi} and \eqref{def-K}, $L_f$, $L_\tf$, $\sigma$ and $G$ be given in Assumptions \ref{a1} and \ref{a2}, and $\epsilon_k$, $\rho_k$, $\mu_k$ and $\lambda^0$ be given in Algorithm \ref{AL-alg}. Suppose that $(x^{k+1},y^{k+1},\lambda^{k+1})$ is generated by Algorithm \ref{AL-alg} for some $0\leq k\in\bbK-1$ satisfying \eqref{muk-1}. Then we have
\begin{align*}
|\tf(x^{k+1},y^{k+1})-\tf^*(x^{k+1})|\leq\max\Bigg\{&2\mu_k^{-1}G^{-2}L_\tf(\epsilon_0+L_f+\rho_kL_\tf)D_\bfy^2,\nn\\
&\ \rho_k^{-1}\mu_k^{-1}\max\{\|\lambda^0\|,\ 2G^{-1}(\epsilon_0+\rho_kL_\tf)D_\bfy\}/2\nn\\
&+\rho_k^{-1 }\left(f_{\rm hi}-f_{\rm low}+\rho_k\epsilon_k+\frac{1}{2\epsilon_k^2}\left(L_k^{-1}+\sigma^{-2}\rho_k^{-2}L_k\right)\right)\Bigg\}.
\end{align*}
\end{lemma}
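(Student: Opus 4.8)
The plan is to follow the proof of Lemma~\ref{l-fgap} almost verbatim, the only substantive change being that I invoke the strongly convex upper bound \eqref{upperbnd-k-str} from Lemma~\ref{l-subp-str} in place of the merely convex bound \eqref{upperbnd-k}. The key point is that the lower-bound argument in Lemma~\ref{l-fgap} never uses the value of $\sigma$: it relies only on the structure of $\mcL$ in \eqref{Lag}--\eqref{tLag}, the nonnegativity of the quadratic term, the estimate \eqref{p-ineq}, and the multiplier bound \eqref{lambday-bnd2}. Hence that half of the argument transfers directly, and I only need to feed in the new upper bound.

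First I would note that, since \eqref{muk-1} is assumed, the argument in the proof of Lemma~\ref{l-subdcnstr} gives \eqref{lambday-bnd2}, namely $\|\lambda^k\|\leq\max\{\|\lambda^0\|,\,2G^{-1}(\epsilon_0+\rho_kL_\tf)D_\bfy\}$. Using \eqref{Lag}--\eqref{tLag}, discarding the nonnegative term $\|[\lambda^k+\mu_k\tg(x^{k+1},y^{k+1})]_+\|^2/(2\mu_k)$, bounding $f(x^{k+1},y^{k+1})\geq f_{\rm low}$, and estimating $\min_z\rho_k\tcL(x^{k+1},z,\lambda^k;\rho_k,\mu_k)$ from above via \eqref{p-ineq} together with \eqref{lambday-bnd2}, I obtain
\[
\max_z\mcL(x^{k+1},y^{k+1},z,\lambda^k;\rho_k,\mu_k)\geq f_{\rm low}+\rho_k\big(\tf(x^{k+1},y^{k+1})-\tf^*(x^{k+1})\big)-\mu_k^{-1}\max\{\|\lambda^0\|,\,2G^{-1}(\epsilon_0+\rho_kL_\tf)D_\bfy\}/2.
\]
Combining this with the strongly convex upper bound \eqref{upperbnd-k-str} and dividing through by $\rho_k$ yields the upper estimate on $\tf(x^{k+1},y^{k+1})-\tf^*(x^{k+1})$, which is exactly the second argument of the outer maximum in the claim.

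For the reverse inequality I would let $\lambda^*\in\bR^l_+$ be an optimal Lagrange multiplier of \eqref{tfstarx} at $x=x^{k+1}$, so that $\|\lambda^*\|\leq G^{-1}L_\tf D_\bfy$ by Lemma~\ref{dual-bnd}(i). Writing $\tf^*(x^{k+1})=\min_y\{\tf(x^{k+1},y)+\langle\lambda^*,\tg(x^{k+1},y)\rangle\}$, evaluating at $y=y^{k+1}$, and bounding $\langle\lambda^*,\tg(x^{k+1},y^{k+1})\rangle\leq\|\lambda^*\|\,\|[\tg(x^{k+1},y^{k+1})]_+\|$ via the feasibility estimate \eqref{x-feas} gives $\tf^*(x^{k+1})-\tf(x^{k+1},y^{k+1})\leq 2\mu_k^{-1}G^{-2}L_\tf(\epsilon_0+L_f+\rho_kL_\tf)D_\bfy^2$, which is the first argument of the outer maximum. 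Taking the larger of the two one-sided bounds produces the asserted estimate on $|\tf(x^{k+1},y^{k+1})-\tf^*(x^{k+1})|$.

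There is no genuine obstacle here; the proof is a routine adaptation. The only thing demanding care is the bookkeeping that propagates \eqref{upperbnd-k-str}---whose last term is $\epsilon_k^{-2}(L_k^{-1}+\sigma^{-2}\rho_k^{-2}L_k)/2$ rather than the convex case's $\mu_k^{-1}(L_k^{-1}\epsilon_k^2+4D_\bfy^2L_k)/2$---correctly into the final bound, and being sure to cite Lemma~\ref{l-subp-str} (the $\sigma>0$ complexity lemma) instead of Lemma~\ref{l-subp}, since the Lagrange-multiplier argument and the lower bound above are insensitive to whether $\tf_1(x,\cdot)$ is merely convex or strongly convex.
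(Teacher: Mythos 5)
Your proposal is correct and matches the paper's own proof of Lemma~\ref{l-fgap-str} essentially step for step: the paper likewise reuses the lower-bound chain from Lemma~\ref{l-fgap} (discarding the nonnegative augmented term, bounding $f\geq f_{\rm low}$, applying \eqref{p-ineq} and \eqref{lambday-bnd2}), substitutes the $\sigma>0$ upper bound \eqref{upperbnd-k-str} from Lemma~\ref{l-subp-str} to obtain \eqref{f-gap-str}, and handles the reverse direction via the multiplier bound of Lemma~\ref{dual-bnd}(i) together with \eqref{x-feas}. Your bookkeeping of the modified last term $\epsilon_k^{-2}(L_k^{-1}+\sigma^{-2}\rho_k^{-2}L_k)/2$ is exactly what the paper carries through, so there is nothing to correct.
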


\begin{proof}
Using \eqref{Lag}, \eqref{tLag}, \eqref{fhi}, \eqref{p-ineq}, and \eqref{lambday-bnd2}, we have
\begin{align*}
&\ \ \max_z\mcL(x^{k+1},y^{k+1},z,\lambda^k;\rho_k,\mu_k)\\
&\ \ \overset{\eqref{Lag}\eqref{tLag}}= f(x^{k+1},y^{k+1})+\rho_k \tf(x^{k+1},y^{k+1})+\frac{1}{2\mu_k}\|[\lambda^k+\mu_k\tg(x^{k+1},y^{k+1})]_+\|^2-\min_z\rho_k\tcL(x^{k+1},z,\lambda^k;\rho_k,\mu_k) \\ 
&\ \ \geq f(x^{k+1},y^{k+1})+\rho_k \tf(x^{k+1},y^{k+1})-\min_z\rho_k\tcL(x^{k+1},z,\lambda^k;\rho_k,\mu_k) \\ 
&\ \overset{\eqref{fhi}\eqref{p-ineq}}{\geq}f_{\rm low}+\rho_k\big(\tf(x^{k+1},y^{k+1})-\tf^*(x^{k+1})\big)-\frac{1}{2\mu_k}\|\lambda^k\|^2\\
&\quad \overset{\eqref{lambday-bnd2}}{\geq}f_{\rm low}+\rho_k\big(\tf(x^{k+1},y^{k+1})-\tf^*(x^{k+1})\big)-\mu_k^{-1}\max\{\|\lambda^0\|,\ 2G^{-1}(\epsilon_0+\rho_kL_\tf)D_\bfy\}/2.
\end{align*}
This together with \eqref{upperbnd-k-str} implies that
\begin{align}
\tf(x^{k+1},y^{k+1})-\tf^*(x^{k+1})\ \leq&\ \rho_k^{-1 }\left(f_{\rm hi}-f_{\rm low}+\rho_k\epsilon_k+\frac{1}{2\epsilon_k^2}\left(L_k^{-1}+\sigma^{-2}\rho_k^{-2}L_k\right)\right)\nn\\
&+\rho_k^{-1}\mu_k^{-1}\max\{\|\lambda^0\|,\ 2G^{-1}(\epsilon_0+\rho_kL_\tf)D_\bfy\}/2.\label{f-gap-str}
\end{align}
On the other hand, let $\lambda^*\in\bR^l_+$ be an optimal Lagrangian multiplier of problem \eqref{tfstarx} with $x=x^{k+1}$. It then follows from Lemma \ref{dual-bnd}(i) that  $\|\lambda^*\|\leq G^{-1}L_\tf D_\bfy$. Using these, \eqref{tfstarx} and \eqref{x-feas}, we have
\begin{align*}
\tf^*(x^{k+1})=&\min_y\left\{\tf(x^{k+1},y)+\langle\lambda^*,\tg(x^{k+1},y)\rangle\right\}\leq\tf(x^{k+1},y^{k+1})+\langle\lambda^*,\tg(x^{k+1},y^{k+1})\rangle\\
\leq & \ \tf(x^{k+1},y^{k+1})+\|\lambda^*\|\|[\tg(x^{k+1},y^{k+1})]_+\|\leq\tf(x^{k+1},y^{k+1})+2\mu_k^{-1}G^{-2}L_\tf(\epsilon_0+L_f+\rho_kL_\tf)D_\bfy^2.
\end{align*}
The conclusion of this lemma then follows from this and \eqref{f-gap-str}.
\end{proof}

The following lemma provides an estimate on operation complexity at step 2 of Algorithm~\ref{AL-alg} for  problem \eqref{prob} with $\sigma>0$, i.e., $\tf_1(x,\cdot)$ being strongly convex with parameter $\sigma$  for any given $x\in\dom\,f_2$.

\begin{lemma}\label{l-init-str}
Suppose that Assumptions \ref{a1} and \ref{a2} hold with $\sigma>0$, i.e., $\tf_1(x,\cdot)$ being strongly convex with parameter $\sigma$  for any given $x\in\dom\,f_2$.  Let $\tL_k$, $D_\bfy$ and $\bbK$ be defined in \eqref{tLk}, \eqref{DxDy} and \eqref{def-K}, $\sigma$ be given in Assumption \ref{a1}, $\epsilon_k$ be given in Algorithm \ref{AL-alg}, and 
\beq\label{tNk-str}
\widetilde N_k'=2\left\lceil\sqrt{\tL_k\sigma^{-1}}\,\right\rceil\max\left\{1,\left\lceil2\log(2\epsilon_k^{-1}\tL_kD_\bfy^2)\right\rceil\right\}+1.
\eeq
Then for all $0\leq k\in\bbK-1$, $y_{\rm init}^k$ satisfying \eqref{y-nf} is found at step 2 of  Algorithm \ref{AL-alg} by Algorithm \ref{opt} in no more than $\widetilde N_k'$ evaluations of $\nabla \tf_1$, $\nabla\tg$ and the proximal operator of $\tf_2$, respectively.
\end{lemma}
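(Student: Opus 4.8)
The plan is to mirror the proof of Lemma \ref{l-init}, replacing the convex solver Algorithm \ref{opt} by its strongly convex counterpart Algorithm \ref{opt-str}. First I would observe from \eqref{tLag} and step 2 of Algorithm \ref{AL-alg} that $y_{\rm init}^k$ satisfying \eqref{y-nf} is produced by applying Algorithm \ref{opt-str} to
\[
\min_y\left\{\tL(x^k,y,\lambda^k;\rho_k,\mu_k)=\phi(y)+P(y)\right\},
\]
where $\phi(y)=\tf_1(x^k,y)+\|[\lambda^k+\mu_k\tg(x^k,y)]_+\|^2/(2\rho_k\mu_k)$ and $P(y)=\tf_2(y)$, starting from $\tilde x^0=y^k$.

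The key structural fact to establish is that $\phi$ is $\sigma$-strongly convex and $\tL_k$-smooth on $\dom\,P=\mcY$. Smoothness follows exactly as in Lemma \ref{l-init}: by Assumption \ref{a1} and \eqref{tghi}, the penalty term $\|[\lambda^k+\mu_k\tg(x^k,y)]_+\|^2/(2\rho_k\mu_k)$ is $\rho_k^{-1}(\mu_kL_\tg^2+\mu_k\tg_{\rm hi}L_{\nabla\tg}+\|\lambda^k\|L_{\nabla\tg})$-smooth, and adding the $L_{\nabla\tf_1}$-smooth function $\tf_1(x^k,\cdot)$ yields the constant $\tL_k$ of \eqref{tLk}. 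For strong convexity I would note that each $\tg_i(x^k,\cdot)$ is convex (Assumption \ref{a1}(iii)), so $[\lambda^k_i+\mu_k\tg_i(x^k,y)]_+$ is a convex nonnegative function of $y$ and hence its square is convex; summing over $i$ keeps the penalty term convex. Since $\tf_1(x^k,\cdot)$ is $\sigma$-strongly convex (Assumption \ref{a1}(i) with $\sigma>0$), $\phi$ is $\sigma$-strongly convex.

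With these two properties in hand, I would invoke Theorem \ref{thm-opt-str} (see Appendix \ref{appendix-A}) with $\tilde\epsilon=\epsilon_k$, $D_P=D_\bfy$, $\sigma_\phi=\sigma$ and $L_{\nabla\phi}=\tL_k$ to conclude that Algorithm \ref{opt-str} finds $y_{\rm init}^k$ satisfying \eqref{y-nf} within $\widetilde N_k'$ iterations, where $\widetilde N_k'$ is given in \eqref{tNk-str}. The $\sqrt{\tL_k/\sigma}$ factor reflects the accelerated linear rate for strongly convex objectives, while the logarithmic factor $\lceil 2\log(2\epsilon_k^{-1}\tL_kD_\bfy^2)\rceil$ arises from driving the initial function gap---bounded by $\tL_kD_\bfy^2/2$ via smoothness together with the diameter bound \eqref{DxDy}---down to $\epsilon_k$. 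Since each iteration of Algorithm \ref{opt-str} evaluates $\nabla\phi$ once (requiring one evaluation each of $\nabla\tf_1$ and $\nabla\tg$) and the proximal operator of $P=\tf_2$ once, the stated count follows.

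The argument is essentially a transcription of Lemma \ref{l-init}, so I do not anticipate a genuine obstacle; the only point requiring care is confirming that the precise iteration bound output by Theorem \ref{thm-opt-str} coincides with \eqref{tNk-str}, in particular that the initial-gap estimate feeding the logarithm matches the quantity $2\epsilon_k^{-1}\tL_kD_\bfy^2$ appearing there.
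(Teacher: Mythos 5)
Your proposal follows essentially the same route as the paper: identify the composite structure $\tcL(x^k,\cdot,\lambda^k;\rho_k,\mu_k)=\phi+P$ with $\phi(y)=\tf_1(x^k,y)+\|[\lambda^k+\mu_k\tg(x^k,y)]_+\|^2/(2\rho_k\mu_k)$ and $P=\tf_2$, verify that $\phi$ is $\sigma$-strongly convex and $\tL_k$-smooth on $\dom\,P$, and invoke Theorem \ref{thm-opt-str} (together with the remark \eqref{opt-gap}, which converts the stationarity guarantee $\dist(0,\partial\Psi(\tilde x^{k+1}))\leq\tilde\epsilon/D_P$ into the function-gap condition \eqref{y-nf}) with $\tilde\epsilon=\epsilon_k$, $D_P=D_\bfy$, $\sigma_\phi=\sigma$, $L_{\nabla\phi}=\tL_k$. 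Your structural verification and your identification of where the $\sqrt{\tL_k/\sigma}$ and logarithmic factors come from are all consistent with the paper.

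The one concrete slip is in the operation accounting, exactly at the point you flagged as needing care. Theorem \ref{thm-opt-str} bounds the number of \emph{iterations} by $\widetilde T_k'=\lceil\sqrt{\tL_k\sigma^{-1}}\,\rceil\max\{1,\lceil2\log(2\epsilon_k^{-1}\tL_kD_\bfy^2)\rceil\}$, and you claim each iteration of Algorithm \ref{opt-str} costs one evaluation of $\nabla\phi$ and one proximal step. In fact each iteration costs \emph{two} of each: step 5 (the argmin defining $z^{k+1}$, which uses $\nabla\phi(y^k)$ inside $\ell(\cdot;y^k)$ and one prox of $P$) and step 7 (the prox-gradient step producing $\tilde x^{k+1}$, which uses $\nabla\phi(x^{k+1})$ and one more prox). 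In addition, the initialization in step 1 costs one further evaluation of each. This is precisely the origin of the leading factor $2$ and the trailing $+1$ in \eqref{tNk-str}: $\widetilde N_k'=2\widetilde T_k'+1$. Your undercount happens to err in the harmless direction (it would yield fewer evaluations than the bound $\widetilde N_k'$, so the lemma's conclusion still holds a fortiori), but as written your accounting does not actually reproduce the formula \eqref{tNk-str}, so the final sentence ``the stated count follows'' needs the corrected per-iteration cost to be justified. Everything else in the proposal matches the paper's argument.
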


\begin{proof}
 Notice from \eqref{tLag} and Algorithm \ref{AL-alg} that $y_{\rm init}^k$ satisfying \eqref{y-nf} is found by Algorithm \ref{opt} applied to the problem
\begin{equation*}
\min_y\left\{\tL(x^k,y,\lambda^k;\rho_k,\mu_k)=\phi(y)+P(y)\right\},
\end{equation*}
where $\phi(y)=\tf_1(x^k,y)+\|[\lambda^k+\mu_k\tg(x^k,y)]_+\|^2/(2\rho_k\mu_k)$ and $P(y)=\tf_2(y)$. By Assumption \ref{a1} with $\sigma>0$ and \eqref{tghi}, one can see that $\phi$ is $\sigma$-strongly-convex and $\tL_k$-smooth on $\dom\,P$ with $\tL_k$ given in \eqref{tLk}. It then follows from this, Theorem \ref{thm-opt} (see Appendix \ref{appendix-A}) and \eqref{opt-gap} with $\tilde\epsilon=\epsilon_k$, $D_P=D_\bfy$, $\sigma_\phi=\sigma$ and $L_{\nabla\phi}=\tL_k$ that Algorithm \ref{opt-str} finds $y_{\rm init}^k$ satisfying \eqref{y-nf} in no more than $\widetilde T_k'$ iterations, where
\begin{equation*}
\widetilde T_k'=\left\lceil\sqrt{\tL_k\sigma^{-1}}\,\right\rceil\max\left\{1,\left\lceil2\log(2\epsilon_k^{-1}\tL_kD_\bfy^2)\right\rceil\right\}.
\end{equation*}
Notice that the first step of Algorithm \ref{opt} requires one evaluation of $\nabla\phi$ and the proximal operator of $P$, respectively, and each iteration of Algorithm \ref{opt} requires two evaluation of $\nabla\phi$ and the proximal operator of $P$, respectively. Hence, the conclusion of this lemma holds.
\end{proof}

We are now ready to prove Theorem \ref{complexity-str}.

\begin{proof}[\textbf{Proof of Theorem \ref{complexity-str}}]
(i) Recall from the proof of Theorem \ref{complexity} that \eqref{L-ineq} holds. It then follows from this, $\epsilon_K\leq\varepsilon$, \eqref{eps} and Lemmas \ref{l-subdcnstr} and \ref{l-fgap-str} that \eqref{kkt1}-\eqref{kkt6} and \eqref{kkt7-str} hold, which proves statement (i) of Theorem \ref{complexity-str}. 

(ii) Let $K$, $\tilde\alpha$, $\tilde\delta$, $\widetilde M$, $\widetilde T$ and $\widetilde N$ be given in \eqref{def-K}, \eqref{ho-str}, \eqref{hM-str}, \eqref{hT-str} and \eqref{def-N-str}, respectively. Recall from Lemmas \ref{l-subp-str} and \ref{l-init-str} that the number of evaluations of $\nabla f_1$, $\nabla \tf_1$, $\nabla \tg$, proximal operators of $f_2$ and $\tf_2$ performed by Algorithms \ref{opt-str} and \ref{mmax-alg2} at iteration $k$ of Algorithm~\ref{AL-alg} is at most $\widetilde N_k+\widetilde N_k'$, where $\widetilde N_k$ and $\widetilde N_k'$ are given in \eqref{mmax-N-str} and \eqref{tNk-str}, respectively. By this and statement (i) of this theorem, one can observe that the total number of evaluations of $\nabla f_1$, $\nabla \tf_1$, $\nabla \tg$ and proximal operators of $f_2$ and $\tf_2$ performed in Algorithm~\ref{AL-alg} is no more than $\sum_{k=0}^K(\widetilde N_k+\widetilde N_k')$, respectively. As a result, to prove statement (ii) of this theorem, it suffices to show that $\sum_{k=0}^K(\widetilde N_k+\widetilde N_k')\leq \widetilde N$. 

To this end,  using $\mu_k\geq\rho_k\geq1\geq\epsilon_k$, \eqref{ho-str}, \eqref{hM-str}, \eqref{hT-str}, \eqref{L-ineq} \eqref{mmax-omega-str}, \eqref{mmax-zeta-str}, \eqref{mmax-Mk-str} and \eqref{mmax-Tk-str}, we obtain that
\begin{align}
&1\geq\tilde\alpha_k\geq\min\left\{1,\sqrt{8\sigma\rho_k/(\mu_k L)}\right\}\geq\rho_k^{1/2}\mu_k^{-1/2}\tilde\alpha, \label{alpha-ineq-str}\\
&\tilde\delta_k\leq(2+\rho_k^{-1/2}\mu_k^{1/2}\tilde\alpha^{-1})(D_\bfx^2+D_\bfy^2)\mu_kL+\max\{2\sigma\rho_k,\mu_kL/4\}D_\bfy^2\leq\rho_k^{-1/2}\mu_k^{3/2}\tilde\delta, \label{delta-ineq-str}\\
&\widetilde M_k\leq\frac{16\max\left\{1/(4\mu_kL_\tg^2),2/(\rho_k^{1/2}\mu_k^{-1/2}\tilde\alpha\mu_kL_\tg^2)\right\}}{\left[9\mu_k^2L^2/\min\{2\mu_kL_\tg^2,\sigma\rho_k\}+3\mu_kL\right]^{-2}\epsilon_k^2}\times\Big(\rho_k^{-1/2}\mu_k^{3/2}\tilde\delta \nn \\
&\qquad\quad+2\rho_k^{-1/2}\mu_k^{1/2}\tilde\alpha^{-1}\big(f^*-f_{\rm low}+\rho_k(\tf^*_{\rm hi}-\tf_{\rm low})+\rho_kL_\tf D_\bfy+3\rho_k\vartheta+\mu_k\tg_{\rm hi}^2+\mu_kL (D_\bfx^2+D_\bfy^2)\big)\Big) \label{M-ineq1-str} \\
&\leq \frac{16\rho_k^{-1/2}\mu_k^{-1/2}\max\left\{1/(4L_\tg^2),2/(\tilde\alpha L_\tg^2)\right\}}{\rho_k^2\mu_k^{-4}\left[9L^2/\min\{2L_\tg^2,\sigma\}+3L\right]^{-2}\epsilon_k^2}\times\rho_k^{-1/2}\mu_k^{3/2} \nn \\
& \quad  \times \Big(\tilde\delta+2\tilde\alpha^{-1}\big(f^*-f_{\rm low}+\tf^*_{\rm hi}-\tf_{\rm low}+L_\tf D_\bfy+3\vartheta+\tg_{\rm hi}^2+L(D_\bfx^2+D_\bfy^2)\big)\Big) \ = \epsilon_k^{-2}\rho_k^{-3}\mu_k^5\widetilde M, \label{M-ineq2-str} \\
&\widetilde T_k\leq\Big\lceil16\left(f_{\rm hi}-f_{\rm low}+\rho_k\epsilon_k\right)\epsilon_k^{-2}\mu_kL+8\sigma^{-2}\rho_k^{-2}\mu_k^2L^2+7\Big\rceil_+\leq\epsilon_k^{-2}\mu_k\widetilde T, \label{T-ineq-str} 
\end{align}
where \eqref{alpha-ineq-str} follows from \eqref{ho-str}, \eqref{L-ineq} and \eqref{mmax-omega-str};  \eqref{delta-ineq-str} is due to \eqref{ho-str}, \eqref{mmax-zeta-str}, \eqref{alpha-ineq-str} and $\mu_k\geq1\geq\epsilon_k$; \eqref{M-ineq1-str} is due to \eqref{L-ineq}, \eqref{mmax-Mk-str}, \eqref{alpha-ineq-str}, \eqref{delta-ineq-str} and $\epsilon_k\in (0,1]$; \eqref{M-ineq2-str} follows from 
$\mu_k\geq\rho_k\geq1\geq\epsilon_k$ and \eqref{hM-str}; and \eqref{T-ineq-str} is due to \eqref{hT-str}, \eqref{L-ineq} and the fact that $\epsilon_k\in (0,1]$ and $\rho_k\epsilon_k=1$. By the above inequalities, \eqref{L-ineq}, \eqref{mmax-N-str}, $\widetilde T>1$ and $\mu_k\geq1\geq\epsilon_k$, one has
\begin{align}
&\sum_{k=0}^K\widetilde N_k\leq \sum_{k=0}^K 3397\max\left\{2,\sqrt{\mu_kL/(2\sigma\rho_k)}\right\}\nn\\
&\qquad\qquad\,\times\left((\epsilon_k^{-2}\mu_k\widetilde T+1)(\log (\epsilon_k^{-2}\rho_k^{-3}\mu_k^5\widetilde M))_++\epsilon_k^{-2}\mu_k\widetilde T+1+2\epsilon_k^{-2}\mu_k\widetilde T\log(\epsilon_k^{-2}\mu_k\widetilde T+1) \right)\nn\\
&\leq\sum_{k=0}^K3397\max\left\{2,\sqrt{L/(2\sigma)}\right\}\rho_k^{-1/2}\mu_k^{1/2}\times\epsilon_k^{-2}\mu_k\left((\widetilde T+1)(\log (\epsilon_k^{-2}\rho_k^{-3}\mu_k^5\widetilde M))_++\widetilde T+1+2\widetilde T\log(\epsilon_k^{-2}\mu_k\widetilde T+1) \right)\nn\\
&\leq\sum_{k=0}^K3397\max\left\{2,\sqrt{L/(2\sigma)}\right\}\epsilon_k^{-2}\rho_k^{-1/2}\mu_k^{3/2}\widetilde T\left(2(\log (\epsilon_k^{-2}\rho_k^{-3}\mu_k^5\widetilde M))_++2+2\log(2\epsilon_k^{-2}\mu_k\widetilde T) \right)\nn\\
&\leq\sum_{k=0}^K3397\max\left\{2,\sqrt{L/(2\sigma)}\right\}\widetilde T\epsilon_k^{-2}\rho_k^{-1/2}\mu_k^{3/2}\left(12\log\mu_k-6\log\rho_k-8\log\epsilon_k+2(\log \widetilde M)_++2+2\log(2\widetilde T) \right),\label{sum-N-str}
\end{align}
where the first inequality follows from $\epsilon_k\in (0,1]$, \eqref{L-ineq}, \eqref{mmax-N-str}, \eqref{M-ineq2-str} and \eqref{T-ineq-str}, and the second and third inequalities are due to the fact that $\mu_k\geq1\geq\epsilon_k$ and $\widetilde T>1$. 
By the definition of $K$ in \eqref{def-K}, one has $\tau^K\geq\tau\varepsilon/\epsilon_0$. Also, notice from Algorithm \ref{AL-alg} that $\rho_k=\epsilon_k^{-1}=(\epsilon_0\tau^k)^{-1}$ and $\mu_k=\epsilon_k^{-3}=(\epsilon_0\tau^k)^{-3}$. It then follows from these and \eqref{sum-N-str} that
\begin{align}
&\sum_{k=0}^K\widetilde N_k\leq\sum_{k=0}^K3397\max\left\{2,\sqrt{L/(2\sigma)}\right\}\widetilde T \epsilon_k^{-6}\left(38\log(1/\epsilon_k)+2(\log \widetilde M)_++2+2\log(2\widetilde T) \right)\nn\\
&= 3397\max\left\{2,\sqrt{L/(2\sigma)}\right\}\widetilde T\sum_{k=0}^K\epsilon_0^{-6}\tau^{-6k}\left(38k\log(1/\tau)+38\log(1/\epsilon_0)+2(\log \widetilde M)_++2+2\log(2\widetilde T) \right)\nn\\
&\leq 3397\max\left\{2,\sqrt{L/(2\sigma)}\right\}\widetilde T\sum_{k=0}^K\epsilon_0^{-6}\tau^{-6k}\left(38K\log(1/\tau)+38\log(1/\epsilon_0)+2(\log \widetilde M)_++2+2\log(2\widetilde T) \right)\nn\\
&\leq 3397\max\left\{2,\sqrt{L/(2\sigma)}\right\}\widetilde T\epsilon_0^{-6}\tau^{-6K}(1-\tau^6)^{-1}\left(38K\log(1/\tau)+38\log(1/\epsilon_0)+2(\log \widetilde M)_++2+2\log(2\widetilde T) \right)\nn\\
&\leq3397\max\left\{2,\sqrt{L/(2\sigma)}\right\}\widetilde T\epsilon_0^{-6}(1-\tau^6)^{-1}\nn\\
&\ \ \ \times(\tau\varepsilon/\epsilon_0)^{-6}\left(38K\log(1/\tau)+38\log(1/\epsilon_0)+2(\log \widetilde M)_++2+2\log(2\widetilde T) \right),\label{part1-str}
\end{align}
where the second last inequality is due to $\sum_{k=0}^K\tau^{-6k}\leq \tau^{-6K}/(1-\tau^6)$, and the last inequality follows from $\tau^K\geq\tau\varepsilon/\epsilon_0$.

In addition, observe from \eqref{tLk}, \eqref{hL}, \eqref{ly-cnstr} and $\rho_k^{-1}\mu_k\geq1$, one has that for all $0\leq k\in\bbK-1$,
\begin{equation*}
\tL_k= L_{\nabla\tf_1}+\rho_k^{-1}(\mu_kL_\tg^2+\mu_k\tg_{\rm hi}L_{\nabla \tg}+\|\lambda^k\|L_{\nabla\tg})\leq L_{\nabla\tf_1}+\rho_k^{-1}(\mu_kL_\tg^2+\mu_k\tg_{\rm hi}L_{\nabla \tg}+\sqrt{2\rho_k\mu_k\vartheta}L_{\nabla\tg})\leq\rho_k^{-1}\mu_k\tL.
\end{equation*}
Using this, \eqref{tNk-str}, $\epsilon_k=\epsilon_0\tau^k$, $\rho_k=\epsilon_k^{-1}$, and $\mu_k=\epsilon_k^{-3}$, we have
\begin{align*}
&\sum_{k=1}^K\widetilde N_k'\leq\sum_{k=1}^K\left(2\left\lceil\sqrt{\frac{\rho_k^{-1}\mu_k\tL}{\sigma}}\,\right\rceil\max\left\{1,\left\lceil2\log\left(2\epsilon_k^{-1}\rho_k^{-1}\mu_k\tL D_\bfy^2\right)\right\rceil\right\}+1\right)\nn\\
&=\sum_{k=1}^K2\left\lceil(\epsilon_0\tau^k)^{-1}\sqrt{\frac{\tL}{\sigma}}\,\right\rceil\max\left\{1,\left\lceil2\log(2\tL D_\bfy^2)+6k\log(1/\tau)-6\log\epsilon_0\right\rceil\right\}+K\nn\\
&\leq\sum_{k=1}^K2(\epsilon_0\tau^k)^{-1}\left\lceil\sqrt{\frac{\tL}{\sigma}}+1\right\rceil\max\left\{1,\left\lceil2\log(2\tL D_\bfy^2)+6K\log(1/\tau)-6\log\epsilon_0\right\rceil\right\}+K\nn\\
&\leq2\epsilon_0^{-1}\tau^{-K}(1-\tau)\left\lceil\sqrt{\frac{\tL}{\sigma}}+1\right\rceil\max\left\{1,\left\lceil2\log(2\tL D_\bfy^2)+6K\log(1/\tau)-6\log\epsilon_0\right\rceil\right\}+K\nn\\
&\leq2(\tau\varepsilon)^{-1}(1-\tau)\left\lceil\sqrt{\frac{\tL}{\sigma}}+1\right\rceil\max\left\{1,\left\lceil2\log(2\tL D_\bfy^2)+6K\log(1/\tau)-6\log\epsilon_0\right\rceil\right\}+K
\end{align*}
where the second last inequality is due to $\sum_{k=0}^K\tau^{-k}\leq \tau^{-K}/(1-\tau)$, and the last inequality follows from $\tau^K\geq\tau\varepsilon/\epsilon_0$. 
This together with \eqref{def-N-str} and \eqref{part1-str} implies that $\sum_{k=1}^K(\widetilde N_k+\widetilde N_k')\leq \widetilde N$. Hence, statement (ii) of Theorem \ref{complexity-str} holds.
\end{proof}

\appendix
\section{Optimal first-order methods for unconstrained convex optimization problems}\label{appendix-A}

In this part we review optimal first-order methods for solving convex optimization problem
\beq\label{cvx-prob}
\Psi^*=\min_x\{\Psi(x)\coloneqq\phi(x)+P(x)\},
\eeq
where $P:\bR^m\to(-\infty,\infty]$ are closed convex functions, $\phi:\bR^m\to(-\infty,\infty]$ is a $\sigma_\phi$-strongly-convex function with $\sigma_\phi\geq 0$, and $\nabla\phi$ is $L_{\nabla\phi}$-Lipschitz continuous on $\dom\;P$. In addition, we assume that $\dom\;P$ is compact and let $D_P:=\max_{x,y\in\dom\;P}\|x-y\|$.

We first present an optimal first-order method in Algorithm \ref{opt} for solving problem \eqref{cvx-prob} with $\sigma_\phi=0$, i.e., $\phi$ being convex but not strongly convex.  It is a variant of Nesterov's optimal first-order method \cite{Nest04-1} and has been studied in, for example,  \cite[Section 3]{Tse08}.
\begin{algorithm}[H]
\caption{An optimal first-order method for problem \eqref{cvx-prob} with $\sigma_\phi=0$}\label{opt}
\begin{algorithmic}[1]
\REQUIRE $\tilde\epsilon>0$, $\tilde x^0\in\dom\;P$ and $x^0=z^0=\tilde x^0$.
\FOR{$k=0,1,\dots$}
\STATE Set $y^k=(kx^k+2z^k)/(k+2)$.
\STATE Compute $z^{k+1}$ as
\begin{equation*}
z^{k+1}=\argmin_z\left\{\ell(z;y^k)+\frac{L_{\nabla\phi}}{k+2}\|z-z^k\|^2\right\},
\end{equation*}
where
\beq\label{def-ell}
\ell(x;y):=\phi(y)+\langle\nabla\phi(y),x-y\rangle+P(x).
\eeq
\STATE Set $x^{k+1}=(kx^k+2z^{k+1})/(k+2)$.
\STATE Terminate the algorithm and output $x^{k+1}$ if 
\[
\Psi(x^{k+1})-\underline\Psi_{k+1}\leq\tilde\epsilon,\qquad\mbox{where}\quad\underline\Psi_{k+1}=\frac{4}{(k+1)(k+3)}\min\left\{\sum_{i=0}^k\frac{i+2}{2}\ell(x;y^i)\right\}.
\]
\ENDFOR
\end{algorithmic}
\end{algorithm}

The following result provides an iteration complexity of Algorithm \ref{opt} for finding an $\tilde\epsilon$-optimal solution\footnote{An $\tilde\epsilon$-optimal solution of 
problem \eqref{cvx-prob} is a point $x$ satisfying $\Psi(x) - \Psi^* \leq \tilde\epsilon$.} 
of \eqref{cvx-prob}. It is an immediate consequence of \cite[Corrolary]{Tse08} and its proof is thus omitted.
\begin{thm}\label{thm-opt}
Let $\{(x^k,y^k)\}$ be generated by Algorithm \ref{opt} and $\ell(\cdot;\cdot)$ be defined in \eqref{def-ell}. Then, $\Psi(x^k)-\Psi^*\leq\Psi(x^k)-\underline\Psi_k$ for all $k\geq1$. Moreover, for any given $\tilde\epsilon>0$, Algorithm \ref{opt} finds an approximate solution $x^{k+1}$ of problem \eqref{cvx-prob} such that $\Psi(x^{k+1})-\Psi^*\leq\Psi(x^{k+1})-\underline\Psi_{k+1}\leq\tilde\epsilon$ in no more than $\bar K$ iterations, where
\begin{equation*}
\bar K=\left\lceil D_P\sqrt{2L_{\nabla\phi} {\tilde\epsilon}^{-1}}\right\rceil.
\end{equation*}
\end{thm}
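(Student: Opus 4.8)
The plan is to recognize Algorithm~\ref{opt} as the instance of Tseng's accelerated proximal gradient scheme with the weight choice $\theta_k=2/(k+2)$, and to run the standard estimate-sequence analysis, so that the bound follows as in \cite{Tse08}. I would begin with the easy inequality $\Psi(x^k)-\Psi^*\le\Psi(x^k)-\underline\Psi_k$, which is equivalent to showing $\underline\Psi_k\le\Psi^*$. Since $\phi$ is convex, the gradient inequality gives $\phi(y^i)+\langle\nabla\phi(y^i),x-y^i\rangle\le\phi(x)$, so each linearization minorizes the objective, i.e.\ $\ell(x;y^i)\le\Psi(x)$ for all $x$. Because the weights appearing in the definition of $\underline\Psi_k$ are normalized to sum to one, the weighted combination of these minorants is itself a minorant of $\Psi$, and minimizing over $x$ preserves this; evaluating at the minimizer $x^*$ of $\Psi$ then yields $\underline\Psi_k\le\Psi^*$.

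The core of the argument is a one-step recursion. I would combine three ingredients: (i) the descent lemma from $L_{\nabla\phi}$-smoothness, $\Psi(x^{k+1})\le\ell(x^{k+1};y^k)+\tfrac{L_{\nabla\phi}}{2}\|x^{k+1}-y^k\|^2$; (ii) the identity $x^{k+1}-y^k=\tfrac{2}{k+2}(z^{k+1}-z^k)$ obtained by subtracting the update rules for $x^{k+1}$ and $y^k$, which rewrites the quadratic term as $\tfrac{2L_{\nabla\phi}}{(k+2)^2}\|z^{k+1}-z^k\|^2$; and (iii) the three-point inequality for the prox step, valid since its objective is strongly convex with minimizer $z^{k+1}$,
\[
\ell(z^{k+1};y^k)+\tfrac{L_{\nabla\phi}}{k+2}\|z^{k+1}-z^k\|^2\le\ell(z;y^k)+\tfrac{L_{\nabla\phi}}{k+2}\|z-z^k\|^2-\tfrac{L_{\nabla\phi}}{k+2}\|z-z^{k+1}\|^2\quad\forall z.
\]
Using the convexity of $\ell(\cdot;y^k)$ together with $x^{k+1}=\tfrac{k}{k+2}x^k+\tfrac{2}{k+2}z^{k+1}$ to split $\ell(x^{k+1};y^k)$, these assemble into an inequality of the form $A_{k+1}\Psi(x^{k+1})\le A_k\Psi(x^k)+(\text{weighted }\ell\text{-term})+L_{\nabla\phi}\big(\|z-z^k\|^2-\|z-z^{k+1}\|^2\big)$ with triangular weights $A_k$. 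Telescoping over the iterations and taking $z=x^*$ collapses the distance terms, while the accumulated $\ell$-terms reconstruct precisely the weighted sum defining $\underline\Psi_{k+1}$; the compactness bound $\|x^*-z^0\|\le D_P$ then gives a guarantee of the form $\Psi(x^{k+1})-\underline\Psi_{k+1}\le\tfrac{2L_{\nabla\phi}D_P^2}{(k+1)^2}$.

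Finally, the iteration count follows by forcing this gap below $\tilde\epsilon$: it suffices that $(k+1)^2\ge 2L_{\nabla\phi}D_P^2\tilde\epsilon^{-1}$, i.e.\ $k+1\ge D_P\sqrt{2L_{\nabla\phi}\tilde\epsilon^{-1}}$, so the stopping test is met after at most $\bar K=\lceil D_P\sqrt{2L_{\nabla\phi}\tilde\epsilon^{-1}}\rceil$ iterations. I expect the main obstacle to be the bookkeeping in the telescoping step: one must choose the per-iteration multipliers so that the terms $A_{k+1}\Psi(x^{k+1})$ chain correctly, the residual $\ell$-terms aggregate into exactly the normalized weighted sum in $\underline\Psi_{k+1}$, and the distance terms telescope cleanly. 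Matching these three requirements simultaneously is what pins down the weights $\tfrac{i+2}{2}$ and the normalization factor, and is the only genuinely delicate part; everything else is a routine consequence of convexity and smoothness.
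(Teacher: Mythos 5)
The paper offers no written proof here---Theorem~\ref{thm-opt} is dispatched by citing Tseng's corollary---so your reconstruction is along the intended lines, and your skeleton (descent lemma, the identity $x^{k+1}-y^k=\tfrac{2}{k+2}(z^{k+1}-z^k)$, the three-point inequality for the prox step, the convexity split of $\ell(x^{k+1};y^k)$, telescoping) is exactly Tseng's argument. However, both of the places where you wave at the bookkeeping contain genuine gaps, and they sit precisely at the point you yourself flagged as delicate. First, the normalized weights do \emph{not} sum to one: $\sum_{i=0}^k\tfrac{i+2}{2}=\tfrac{(k+1)(k+4)}{4}$, so with the stated normalizer $\tfrac{4}{(k+1)(k+3)}$ the weights sum to $\tfrac{k+4}{k+3}>1$, and your one-line proof of $\underline\Psi_k\le\Psi^*$ collapses. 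In fact, with that constant the claim is literally false: take $\phi\equiv c>0$ constant and $P$ the indicator of a compact convex set, so every $\ell(\cdot;y^i)\equiv c$ on $\dom P$ and $\underline\Psi_k=\tfrac{k+3}{k+2}\,c>c=\Psi^*$. The constant must be the reciprocal of the weight sum, $\tfrac{4}{(k+1)(k+4)}$ (an apparent typo in the transcription from Tseng); your averaging argument proves the corrected statement, but you asserted rather than verified the normalization, which is the entire nontrivial content of the first claim.

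Second, the telescoping is not exact, and your fix for it is aimed at the wrong target. With $\theta_k=2/(k+2)$ one has $\tfrac{1-\theta_k}{\theta_k^2}=\tfrac{k(k+2)}{4}$ while $\tfrac{1}{\theta_{k-1}^2}=\tfrac{(k+1)^2}{4}$, so chaining $A_{k+1}\Psi(x^{k+1})\le A_k\Psi(x^k)+\cdots$ leaves a surplus of $\tfrac14\Psi(x^k)$ at every step; summing gives $\tfrac{(k+2)^2}{4}\Psi(x^{k+1})+\tfrac14\sum_{j=1}^k\Psi(x^j)\le\sum_{i=0}^k\tfrac{i+2}{2}\ell(z;y^i)+\tfrac{L_{\nabla\phi}}{2}\|z-z^0\|^2$ for all $z$. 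Since $\Psi$ has no sign, the surplus cannot be dropped; it must be absorbed via $\Psi(x^j)\ge\Psi^*\ge\underline\Psi_{k+1}$, i.e., the second claim genuinely uses the first. Moreover, taking $z=x^*$, as you propose, bounds $\Psi(x^{k+1})-\Psi^*$, not the computable gap: to control $\Psi(x^{k+1})-\underline\Psi_{k+1}$ you must evaluate at the \emph{minimizer} $\hat z$ of the weighted sum (which lies in $\dom P$ because $P$ enters with positive total weight, so $\|\hat z-z^0\|\le D_P$). With the corrected normalizer everything then closes exactly, since $(k+1)(k+4)-k=(k+2)^2$, yielding $\Psi(x^{k+1})-\underline\Psi_{k+1}\le 2L_{\nabla\phi}D_P^2/(k+2)^2$; your $(k+1)^2$ denominator is slightly loose but still delivers the stated $\bar K$. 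So the route is right, but as written the proposal does not close either claim: the two steps you deferred as ``routine bookkeeping'' are exactly where the argument lives.
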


We next present an optimal first-order method \cite[Algorithm 4]{lu2018iteration} for solving problem \eqref{cvx-prob} with $\sigma_\phi>0$, i.e., $\phi$ being strongly convex with parameter $\sigma_\phi$, which is a slight variant of Nesterov's optimal first-order methods \cite{lin2015accelerated,Nest04-1}.

\begin{algorithm}[H]
\caption{An optimal first-order method for problem \eqref{cvx-prob} with $\sigma_\phi>0$}\label{opt-str}
\begin{algorithmic}[1]
\REQUIRE $\tilde\epsilon>0$ and $\tilde x^0\in\dom\;P$.
\STATE Compute
\begin{equation*}
x^0=\prox_{P/L_{\nabla\phi}}\left(\tilde x^0-L^{-1}_{\nabla\phi}\nabla\phi(\tilde x^0)\right).
\end{equation*}
\STATE Set $z^0=x^0$ and $\alpha=\sqrt{\sigma_\phi/L_{\nabla\phi}}$.
\FOR{$k=0,1,\dots$}
\STATE Set $y^k=(x^k+\alpha z^k)/(1+\alpha)$.
\STATE Compute $z^{k+1}$ as
\begin{equation*}
z^{k+1}=\argmin_z\left\{\ell(z;y^k)+\frac{\alpha L_{\nabla\phi}}{2}\|z-\alpha y^k-(1-\alpha)z^k\|^2\right\},
\end{equation*}
where $\ell(x;y)$ is defined in \eqref{def-ell}.
\STATE Set $x^{k+1}=(1-\alpha)x^k+\alpha z^{k+1}$.
\STATE Compute
\begin{equation*}
\tilde x^{k+1}=\prox_{P/L_{\nabla\phi}}\left(x^{k+1}-L^{-1}_{\nabla\phi}\nabla\phi(x^{k+1})\right).
\end{equation*}
\STATE Terminate the algorithm and output $\tilde x^{k+1}$ if 
\begin{equation*}
\|\tilde x^{k+1}-x^k\|\leq\frac{\tilde\epsilon}{2L_{\nabla\phi}D_P}.
\end{equation*}
\ENDFOR
\end{algorithmic}
\end{algorithm}

The following result provides an iteration complexity of Algorithm \ref{opt-str} for finding an approximate optimal solution of problem \eqref{cvx-prob}, which was established in \cite[Proposition 4]{lu2018iteration}.

\begin{thm}\label{thm-opt-str}
Let $\{\tilde x^k\}$ be the sequence generated by Algorithm \ref{opt-str}. Then for any given $\tilde\epsilon>0$, an approximate solution $\tilde x^{k+1}$ of problem \eqref{cvx-prob} satisfying $\dist(0,\partial\Psi(\tilde x^{k+1}))\leq2L_{\nabla\phi}\|\tilde x^{k+1}-x^{k+1}\|\leq\tilde\epsilon/D_P$ is generated by running Algorithm \ref{opt} for at most $\widetilde K$ iterations, where
\begin{equation*}
\widetilde K=\left\lceil\sqrt{\frac{L_{\nabla\phi}}{\sigma_\phi}}\,\right\rceil\max\left\{1,\left\lceil2\log\frac{2L_{\nabla\phi} D_P^2}{\tilde\epsilon}\right\rceil\right\}.
\end{equation*}
\end{thm}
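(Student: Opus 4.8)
The plan is to recognize Algorithm \ref{opt-str} as an instance of Nesterov's accelerated proximal-gradient method for $\sigma_\phi$-strongly-convex composite minimization and to extract the claimed stationarity guarantee from its linear convergence. The argument splits into two essentially independent pieces: (a) a per-iteration \emph{stationarity estimate} that converts the prox-gradient residual into a bound on $\dist(0,\partial\Psi(\tilde x^{k+1}))$, and (b) a \emph{linear convergence estimate} ensuring the termination test is met within $\widetilde K$ iterations.

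First I would dispose of piece (a), which is purely algebraic and does not use acceleration. From $\tilde x^{k+1}=\prox_{P/L_{\nabla\phi}}(x^{k+1}-L_{\nabla\phi}^{-1}\nabla\phi(x^{k+1}))$, the optimality condition of the proximal subproblem gives $L_{\nabla\phi}(x^{k+1}-\tilde x^{k+1})-\nabla\phi(x^{k+1})\in\partial P(\tilde x^{k+1})$. Adding $\nabla\phi(\tilde x^{k+1})$ to both sides produces an explicit element of $\partial\Psi(\tilde x^{k+1})=\nabla\phi(\tilde x^{k+1})+\partial P(\tilde x^{k+1})$, and bounding its norm with the triangle inequality and the $L_{\nabla\phi}$-smoothness of $\phi$ yields $\dist(0,\partial\Psi(\tilde x^{k+1}))\le L_{\nabla\phi}\|x^{k+1}-\tilde x^{k+1}\|+\|\nabla\phi(\tilde x^{k+1})-\nabla\phi(x^{k+1})\|\le 2L_{\nabla\phi}\|\tilde x^{k+1}-x^{k+1}\|$, which is exactly the first inequality in the statement.

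For piece (b) I would build a potential function of the form $V_k=\Psi(x^k)-\Psi^*+\tfrac{\alpha L_{\nabla\phi}}{2}\|z^k-x^*\|^2$ with $\alpha=\sqrt{\sigma_\phi/L_{\nabla\phi}}$, where $x^*$ is the unique minimizer of \eqref{cvx-prob}. Using the three-point inequality from the strongly convex prox subproblem defining $z^{k+1}$, the strong convexity of $\phi$, and the smoothness upper bound at $x^{k+1}$, one derives the contraction $V_{k+1}\le(1-\alpha)V_k$, hence $V_k\le(1-\alpha)^kV_0$. Strong convexity of $\Psi$ then gives $\tfrac{\sigma_\phi}{2}\|x^k-x^*\|^2\le\Psi(x^k)-\Psi^*\le V_k$, so $\|x^k-x^*\|$ decays like $(1-\alpha)^{k/2}$; since $z^0=x^0$ is a prox-gradient step from $\tilde x^0$ with $\|z^0-x^*\|\le D_P$, the descent lemma gives $V_0\le L_{\nabla\phi}D_P^2$. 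A contraction argument for the prox-gradient map around its fixed point $x^*$ shows $\|\tilde x^{k+1}-x^*\|$ decays at the same geometric rate. Bounding the termination quantity via the triangle inequality and solving the resulting geometric inequality for the smallest admissible $k$ produces $\widetilde K=\lceil\sqrt{L_{\nabla\phi}/\sigma_\phi}\rceil\max\{1,\lceil 2\log(2L_{\nabla\phi}D_P^2/\tilde\epsilon)\rceil\}$, after which the test, and hence via piece (a) the bound $\tilde\epsilon/D_P$, are guaranteed.

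I expect the main obstacle to be establishing $V_{k+1}\le(1-\alpha)V_k$ with the \emph{accelerated} rate $\alpha=\sqrt{\sigma_\phi/L_{\nabla\phi}}$ rather than the non-accelerated $\sigma_\phi/L_{\nabla\phi}$: this requires the delicate potential-function bookkeeping specific to Nesterov's method, carefully combining the optimality inequality of the $z$-update with the strong-convexity lower bound and the descent furnished by the $x$-update, and it is precisely this step that is inherited from \cite{lu2018iteration,lin2015accelerated,Nest04-1}. A secondary technical point is reconciling the termination measure $\|\tilde x^{k+1}-x^k\|$ used in the algorithm with the residual $\|\tilde x^{k+1}-x^{k+1}\|$ appearing in the stationarity bound, which I would handle by showing both are controlled by the same geometric decay.
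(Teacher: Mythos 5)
The paper itself contains no proof of this theorem --- it is imported verbatim from \cite[Proposition 4]{lu2018iteration} --- so your proposal is being measured against that external argument, and structurally you have reproduced it: piece (a) is exactly the standard prox-gradient residual computation (the optimality condition of the prox step plus $L_{\nabla\phi}$-smoothness gives $\dist(0,\partial\Psi(\tilde x^{k+1}))\le 2L_{\nabla\phi}\|\tilde x^{k+1}-x^{k+1}\|$) and is airtight; piece (b)'s potential $V_k=\Psi(x^k)-\Psi^*+\tfrac{\alpha L_{\nabla\phi}}{2}\|z^k-x^*\|^2$ with contraction factor $1-\alpha$, $\alpha=\sqrt{\sigma_\phi/L_{\nabla\phi}}$, is the known estimate for this constant-momentum scheme and matches the $z$-update's prox weight, so it can be made rigorous along the lines of \cite{lin2015accelerated,Nest04-1}. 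You also correctly spotted that the printed termination test uses $\|\tilde x^{k+1}-x^k\|$ while the theorem concerns $\|\tilde x^{k+1}-x^{k+1}\|$; in the source reference the test is indeed on $\|\tilde x^{k+1}-x^{k+1}\|$ (the $x^k$ here is a typo), so no reconciliation argument is actually needed.

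The one step that fails quantitatively is your route from $V_{k+1}$ to the residual. Going through $\tfrac{\sigma_\phi}{2}\|x^{k+1}-x^*\|^2\le V_{k+1}$ and nonexpansiveness of the prox-gradient map gives $\|\tilde x^{k+1}-x^{k+1}\|\le 2\|x^{k+1}-x^*\|\le 2\sqrt{2V_0/\sigma_\phi}\,(1-\alpha)^{(k+1)/2}$, and with your (loose) $V_0\le L_{\nabla\phi}D_P^2$ the threshold $\tilde\epsilon/(2L_{\nabla\phi}D_P)$ is guaranteed only after roughly $\sqrt{L_{\nabla\phi}/\sigma_\phi}\,\bigl(2\log\tfrac{2L_{\nabla\phi}D_P^2}{\tilde\epsilon}+\log\tfrac{8L_{\nabla\phi}}{\sigma_\phi}\bigr)$ iterations --- the right order, but strictly more than the stated $\widetilde K$, so the theorem's exact constant is not recovered. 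The repair is to bypass $x^*$ entirely: the sufficient-decrease property of the final prox-gradient step gives $\tfrac{L_{\nabla\phi}}{2}\|\tilde x^{k+1}-x^{k+1}\|^2\le\Psi(x^{k+1})-\Psi(\tilde x^{k+1})\le\Psi(x^{k+1})-\Psi^*\le V_{k+1}$, and the initialization (step 1 of Algorithm \ref{opt-str} is itself a prox-gradient step from $\tilde x^0$) sharpens $V_0$ via the standard inequality $\Psi(x^0)+\tfrac{L_{\nabla\phi}}{2}\|x^0-x^*\|^2\le\Psi^*+\tfrac{L_{\nabla\phi}}{2}\|\tilde x^0-x^*\|^2$, which together with $z^0=x^0$ and $\alpha\le 1$ yields $V_0\le\tfrac{L_{\nabla\phi}}{2}D_P^2$. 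Then $\|\tilde x^{k+1}-x^{k+1}\|\le D_P(1-\alpha)^{(k+1)/2}$, and requiring this to be at most $\tilde\epsilon/(2L_{\nabla\phi}D_P)$ with $-\log(1-\alpha)\ge\alpha$ produces exactly $\widetilde K=\lceil\sqrt{L_{\nabla\phi}/\sigma_\phi}\rceil\max\{1,\lceil 2\log(2L_{\nabla\phi}D_P^2/\tilde\epsilon)\rceil\}$. This is a bookkeeping fix, not a fix of the idea; everything else in your plan is sound.
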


\begin{rem}
By the convexity of $\Psi$, $D_P=\max_{x,y\in\dom\;P}\|x-y\|$, and Theorem \ref{thm-opt-str}, it is not hard to show that the output $\tilde x^{k+1}$ of Algorithm \ref{opt-str} satisfies 
\beq\label{opt-gap}
\Psi(\tilde x^{k+1})-\Psi^*\leq \dist(0,\partial\Psi(\tilde x^{k+1})) D_P  \leq\tilde\epsilon.
\eeq
\end{rem}

\section{A first-order method for nonconvex-concave minimax problem} \label{appendix-B}

In this part, we present a first-order method for finding an $\epsilon$-primal-dual stationary point of the nonconvex-concave minimax problem
\begin{equation}\label{ap-prob}
\bH^* = \min_x\max_y\left\{\bH(x,y)\coloneqq \bh(x,y)+p(x)-q(y)\right\},
\end{equation}
which has at least one optimal solution and satisfies the following assumptions.
\begin{assumption}\label{mmax-a}
\begin{enumerate}[label=(\roman*)]
\item $p:\bR^n\to\bR\cup\{\infty\}$ and $q:\bR^m\to\bR\cup\{\infty\}$ are proper convex functions and continuous on $\dom\,p$ and $\dom\,q$, respectively, and moreover, $\dom\,p$ and $\dom\,q$ are compact.
\item The proximal operators associated with $p$ and $q$ can be exactly evaluated.
\item $\bh$ is $L_{\nabla\bh}$-smooth on $\dom\,p\times\dom\,q$, and moreover, $\bh(x,\cdot)$ is $\sigma_y$-strongly-concave with $\sigma_y\geq 0$ for any given $x\in\dom\,p$. 
\end{enumerate}
\end{assumption}

For ease of presentation, we define
\begin{align}
&D_p=\max\{\|u-v\|\big|u,v\in\dom\,p\},\quad D_q=\max\{\|u-v\|\big|u,v\in\dom\,q\},\label{ap-D}\\
&H_{\rm low}=\min\{H(x,y)|(x,y)\in\dom\, p\times\dom\, q\}.\label{ap-H}
\end{align}

Recently, a first-order method was proposed in \cite[Algorithm 2]{lu2024first} for finding an $\epsilon$-primal-dual stationary point of problem \eqref{ap-prob} with $\sigma_y=0$, while another first-order method was proposed in \cite[Algorithm 1]{lu2024strongminimax} for finding an $\epsilon$-primal-dual stationary point of problem \eqref{ap-prob} with $\sigma_y>0$. We will present a unified first-order method in Algorithm \ref{mmax-alg2} below by combining these two methods. Specifically, given an iterate $(x^k,y^k)$, this unified first-order method finds the next iterate $(x^{k+1},y^{k+1})$ by applying  \cite[Algorithm 1]{lu2024first}, which is a slight modification of a novel optimal first-order method \cite[Algorithm 4]{kovalev2022first} by incorporating a forward-backward splitting scheme and a verifiable termination criterion (see steps 23-25 in Algorithm \ref{mmax-alg1}), to the strongly-convex-strongly-concave minimax problem
\[
\min_x\max_y\{\bh_k(x,y)+p(x)-q(y)\},
\]
where
\beq\label{hk}
\bh_k(x,y)=\left\{\begin{aligned}
&\bh(x,y)-\epsilon\|y-y^0\|^2/(4D_q)+L_{\nabla \bh}\|x-x^k\|^2,\quad&\mbox{if}\quad\sigma_y=0,\\
&\bh(x,y)+L_{\nabla \bh}\|x-x^k\|^2,\quad&\mbox{if}\quad\sigma_y>0.
\end{aligned}\right.
\eeq
This minimax problem arises from applying a proximal point method to the minimization problem $\min_x \{\max_y \bh(x,y)-q(y)-\epsilon\|y-y^0\|^2/(4D_q)+p(x)\}$ if $\sigma_y=0$ or the minimization problem $\min_x\{\max_y \bh(x,y)-q(y)+p(x)\}$ if $\sigma_y>0$. One can easily observe that $h_k$ is $L_{\nabla h}$-strongly-convex-$\hat\sigma_y$-strongly-concave and $\widehat L$-smooth on $\dom\,p\times\dom\, q$, where
\begin{align}
&\hat\sigma_y=\left\{\begin{aligned}
&\epsilon/(2D_q),\quad&\mbox{if}\quad\sigma_y=0,\\
&\sigma_y,\quad&\mbox{if}\quad\sigma_y>0,
\end{aligned}\right.
\qquad
\widehat L=\left\{\begin{aligned}
&3L_{\nabla h}+\epsilon/(2D_q),\quad&\mbox{if}\quad\sigma_y=0,\\
&3L_{\nabla h},\quad&\mbox{if}\quad\sigma_y>0.
\end{aligned}\right.
\label{hsigmaL}
\end{align}

Before presenting a unified first-order method for problem \eqref{ap-prob}, we first present the modified optimal first-order method \cite[Algorithm 1]{lu2024first} in Algorithm \ref{mmax-alg1} below for solving a general strongly-convex-strongly-concave minimax problem
\begin{equation}\label{ea-prob}
\min_{x}\max_{y}\left\{ \h(x,y)+p(x)-q(y)\right\},
\end{equation}
where 
$\h(x,y)$ is $\bar\sigma_x$-strongly-convex-$\bar\sigma_y$-strongly-concave and $L_{\nabla\h}$-smooth on $\dom\,p\times\dom\,q$ for some $\bar\sigma_x,\bar\sigma_y>0$.  The functions $\hat h$, $a^k_x$ and $a^k_y$ arising in Algorithm \ref{mmax-alg1} are defined as follows:
\begin{align*}
&\hat h(x,y)=\h(x,y)-\bar\sigma_x\|x\|^2/2+\bar\sigma_y\|y\|^2/2,\\
&a^k_x(x,y)=\nabla_x\hat h(x,y)+\bar\sigma_x(x-\bar\sigma_x^{-1}z^k_g)/2,\quad a^k_y(x,y)=-\nabla_y\hat h(x,y)+\bar\sigma_y y+\bar\sigma_x(y-y^k_g)/8,
\end{align*}
where $y^k_g$ and $z^k_g$ are generated at iteration $k$ of Algorithm \ref{mmax-alg1} below. 

\begin{algorithm}[H]
\caption{A modified optimal first-order method for problem \eqref{ea-prob}}
\label{mmax-alg1}
\begin{algorithmic}[1]
\REQUIRE $\tau>0$, $\bar z^0=z^0_f\in-\bar\sigma_x\dom\,p$,\footnote{} $\bar y^0=y^0_f\in\dom\,q$, $(z^0,y^0)=(\bar z^0, \bar y^0)$,  $\bar \alpha=\min\left\{1,\sqrt{8\bar\sigma_y/\bar\sigma_x}\right\}$, $\eta_z=\bar\sigma_x/2$, $\eta_y=\min\left\{1/(2\bar\sigma_y),4/(\bar \alpha\bar\sigma_x)\right\}$, $\beta_t=2/(t+3)$, $\zeta=\left(2\sqrt{5}(1+8L_{\nabla\h}/\bar\sigma_x)\right)^{-1}$, $\gamma_x=\gamma_y=8\bar\sigma_x^{-1}$, and $\hat\zeta=\min\{\bar\sigma_x,\bar\sigma_y\}/L_{\nabla \h}^2$.
\FOR{$k=0,1,2,\ldots$}
\STATE $(z^k_g,y^k_g)=\bar \alpha(z^k,y^k)+(1-\bar \alpha)(z^k_f,y^k_f)$.
\STATE $(x^{k,-1},y^{k,-1})=(-\bar\sigma_x^{-1}z^k_g,y^k_g)$.
\STATE $x^{k,0}=\prox_{\zeta\gamma_xp}(x^{k,-1}-\zeta\gamma_x a^k_x(x^{k,-1},y^{k,-1}))$.
\STATE $y^{k,0}=\prox_{\zeta\gamma_y q}(y^{k,-1}-\zeta\gamma_y a^k_y(x^{k,-1},y^{k,-1}))$.
\STATE $b^{k,0}_x=\frac{1}{\zeta\gamma_x}(x^{k,-1}-\zeta\gamma_x a^k_x(x^{k,-1},y^{k,-1})-x^{k,0})$.
\STATE $b^{k,0}_y=\frac{1}{\zeta\gamma_y}(y^{k,-1}-\zeta\gamma_y a^k_y(x^{k,-1},y^{k,-1})-y^{k,0})$.
\STATE $t=0$.
\WHILE{\\ $\gamma_x\|a^k_x(x^{k,t},y^{k,t})+b^{k,t}_x\|^2+\gamma_y\|a^k_y(x^{k,t},y^{k,t})+b^{k,t}_y\|^2>\gamma_x^{-1}\|x^{k,t}-x^{k,-1}\|^2+\gamma_y^{-1}\|y^{k,t}-y^{k,-1}\|^2$\\~~}
\STATE $x^{k,t+1/2}=x^{k,t}+\beta_t(x^{k,0}-x^{k,t})-\zeta\gamma_x(a^k_x(x^{k,t},y^{k,t})+b^{k,t}_x)$.
\STATE $y^{k,t+1/2}=y^{k,t}+\beta_t(y^{k,0}-y^{k,t})-\zeta\gamma_y(a^k_y(x^{k,t},y^{k,t})+b^{k,t}_y)$.
\STATE $x^{k,t+1}=\prox_{\zeta\gamma_x p}(x^{k,t}+\beta_t(x^{k,0}-x^{k,t})-\zeta\gamma_x a^k_x(x^{k,t+1/2},y^{k,t+1/2}))$.
\STATE $y^{k,t+1}=\prox_{\zeta\gamma_y q}(y^{k,t}+\beta_t(y^{k,0}-y^{k,t})-\zeta\gamma_y a^k_y(x^{k,t+1/2},y^{k,t+1/2}))$.
\STATE $b^{k,t+1}_x=\frac{1}{\zeta\gamma_x}(x^{k,t}+\beta_t(x^{k,0}-x^{k,t})-\zeta\gamma_x a^k_x(x^{k,t+1/2},y^{k,t+1/2})-x^{k,t+1})$.
\STATE $b^{k,t+1}_y=\frac{1}{\zeta\gamma_y}(y^{k,t}+\beta_t(y^{k,0}-y^{k,t})-\zeta\gamma_y a^k_y(x^{k,t+1/2},y^{k,t+1/2})-y^{k,t+1})$.
\STATE $t \leftarrow t+1$.
\ENDWHILE
\STATE $(x^{k+1}_f,y^{k+1}_f)=(x^{k,t},y^{k,t})$.
\STATE $(z^{k+1}_f,w^{k+1}_f)=(\nabla_x\hat h(x^{k+1}_f,y^{k+1}_f)+b^{k,t}_x,-\nabla_y\hat h(x^{k+1}_f,y^{k+1}_f)+b^{k,t}_y)$.
\STATE $z^{k+1}=z^k+\eta_z\bar\sigma_x^{-1}(z^{k+1}_f-z^k)-\eta_z(x^{k+1}_f+\bar\sigma_x^{-1}z^{k+1}_f)$.
\STATE $y^{k+1}=y^k+\eta_y\bar\sigma_y(y^{k+1}_f-y^k)-\eta_y(w^{k+1}_f+\bar\sigma_yy^{k+1}_f)$.
\STATE $x^{k+1}=-\bar\sigma_x^{-1}z^{k+1}$.
\STATE $\tx^{k+1}=\prox_{\hat\zeta p}(x^{k+1}-\hat\zeta\nabla_x\h(x^{k+1},y^{k+1}))$.
\STATE $\ty^{k+1}=\prox_{\hat\zeta q}(y^{k+1}+\hat\zeta\nabla_y\h(x^{k+1},y^{k+1}))$.
\STATE Terminate the algorithm and output $(\tx^{k+1},\ty^{k+1})$ if
\begin{equation*}
\|\hat\zeta^{-1}(x^{k+1}-\tx^{k+1},\ty^{k+1}-y^{k+1})-(\nabla \h(x^{k+1},y^{k+1})-\nabla \h(\tx^{k+1},\ty^{k+1}))\|\leq\tau.
\end{equation*}
\ENDFOR
\end{algorithmic}							
\end{algorithm}
\footnotetext{For convenience, $-\bar\sigma_x\dom\,p$ stands for the set $\{-\bar\sigma_x u|u\in\dom\,p\}$.}

We now present a first-order method for finding an $\epsilon$-primal-dual stationary point of problem \eqref{ap-prob} in Algorithm \ref{mmax-alg2} below by unifying  \cite[Algorithm 2]{lu2024first} and \cite[Algorithm 1]{lu2024strongminimax}.

\begin{algorithm}[H]
\caption{A first-order method for problem~\eqref{ap-prob}}
\label{mmax-alg2}
\begin{algorithmic}[1]
\REQUIRE $\epsilon>0$, $\hat\epsilon_0\in(0,\epsilon/2]$, $(\hat x^0,\hat y^0)\in\dom\,p\times\dom\,q$, $(x^0,y^0)=(\hat x^0,\hat y^0)$,  and $\hat\epsilon_k=\hat\epsilon_0/(k+1)$.
\FOR{$k=0,1,2,\ldots$}
\STATE Call Algorithm~\ref{mmax-alg1} with $\h\leftarrow \bh_k$, $\tau \leftarrow \hat\epsilon_k$, $\bar\sigma_x\leftarrow L_{\nabla \bh}$, $\bar\sigma_y\leftarrow \hat\sigma_y$, $L_{\nabla \h}\leftarrow \widehat L$, $\bar z^0=z^0_f\leftarrow-\bar\sigma_x x^k$, $\bar y^0=y^0_f\leftarrow y^k$, and denote its output by $(x^{k+1},y^{k+1})$, where $h_k$ is given in \eqref{hk}, $\hat\sigma_y$ and $\widehat L$ are given in \eqref{hsigmaL}.
\STATE Terminate the algorithm and output $(\xe,\ye)=(x^{k+1},y^{k+1})$ if
\begin{equation*}
\|x^{k+1}-x^k\|\leq\epsilon/(4L_{\nabla \bh}).
\end{equation*}
\ENDFOR
\end{algorithmic}
\end{algorithm}

The following theorem presents complexity results for Algorithm \ref{mmax-alg2}, which is a combination of  \cite[Theorem 2]{lu2024first} for $\sigma_y=0$ and \cite[Theorem 1]{lu2024strongminimax} for $\sigma_y>0$.

\begin{thm}[{\bf Complexity of Algorithm \ref{mmax-alg2}}]\label{mmax-thm}
Suppose that Assumption~\ref{mmax-a} holds. Let $\bH^*$, $H$, $D_p$, $D_q$, $\bH_{\rm low}$, $\hat\sigma_y$ and $\widehat L$ be defined in \eqref{ap-prob}, 
\eqref{ap-D}, \eqref{ap-H} and \eqref{hsigmaL}, $L_{\nabla \bh}$ and $\sigma_y$ be given in Assumption \ref{mmax-a}, $\epsilon$, $\hat\epsilon_0$ and $\hat x^0$ be given in Algorithm~\ref{mmax-alg2}, and 
\begin{align*}
\hat\alpha=&\ \min\left\{1,\sqrt{8\hat\sigma_y/L_{\nabla \bh}}\right\},\\
\hat\delta=&\ (2+\hat\alpha^{-1})L_{\nabla \bh} D_p^2+\max\left\{2\hat\sigma_y,\hat\alpha L_{\nabla \bh}/4\right\}D_q^2,\\
\widehat T=&\ \left\lceil16(\max_y\bH(\hat x^0,y)-\bH^*+(\hat\sigma_y-\sigma_y)D_q^2/2)L_{\nabla \bh}\epsilon^{-2}+32\hat\epsilon_0^2(1+\hat\sigma_y^{-2}L_{\nabla \bh}^2)\epsilon^{-2}-1\right\rceil_+,\\
\widehat N=&\ \left(\left\lceil96\sqrt{2}\left(1+8\widehat LL_{\nabla \bh}^{-1}\right)\right\rceil+2\right)\max\Big\{2,\sqrt{L_{\nabla \bh}/(2\hat\sigma_y)}\Big\}\notag\\
&\ \times\Bigg((\widehat T+1)\Bigg(\log\frac{4\max\left\{\frac{1}{2L_{\nabla \bh}},\min\left\{\frac{1}{2\hat\sigma_y},\frac{4}{\hat\alpha L_{\nabla \bh}}\right\}\right\}\left(\hat\delta+2\hat\alpha^{-1}(\bH^*-\bH_{\rm low}+(\hat\sigma_y-\sigma_y)D_q^2/2+L_{\nabla \bh} D_p^2)\right)}{\left[\widehat L^2/\min\{L_{\nabla \bh},\hat\sigma_y\}+ \widehat L\right]^{-2}\hat\epsilon_0^2}\Bigg)_+\notag\\
&\ +\widehat T+1+2\widehat T\log(\widehat T+1) \Bigg).
\end{align*}
Then Algorithm~\ref{mmax-alg2} terminates and outputs an $\epsilon$-primal-dual stationary point $(\xe,\ye)$ of \eqref{ap-prob} in at most $\widehat T+1$ outer iterations that satisfies 
\begin{equation*}
\max_y\bH(\xe,y)\leq \max_y\bH(\hat x^0,y)+(\hat\sigma_y-\sigma_y)D_q^2/2+2\hat\epsilon_0^2\left(L_{\nabla \bh}^{-1}+\hat\sigma_y^{-2}L_{\nabla \bh}\right).
\end{equation*}
Moreover, the total number of evaluations of $\nabla \bh$ and proximal operators of $p$ and $q$ performed in Algorithm~\ref{mmax-alg2} is no more than $\widehat N$, respectively.
\end{thm}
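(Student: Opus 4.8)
The plan is to treat the two regimes $\sigma_y = 0$ and $\sigma_y > 0$ separately, exploiting the fact that Algorithm~\ref{mmax-alg2} is by construction a unification of two already-analyzed methods: it reduces to \cite[Algorithm 2]{lu2024first} when $\sigma_y = 0$ and to \cite[Algorithm 1]{lu2024strongminimax} when $\sigma_y > 0$. The bulk of the argument is thus a reconciliation step, verifying that the unified definitions \eqref{hk} and \eqref{hsigmaL} specialize to the parameters used in each reference, and that the common expressions for $\widehat T$, $\widehat N$, $\hat\alpha$, $\hat\delta$, and the function-value bound collapse to the corresponding case-specific quantities. Once this is done, each of the four conclusions (termination within $\widehat T+1$ outer iterations, $\epsilon$-primal-dual stationarity of the output, the bound on $\max_y\bH(\xe,y)$, and the total count $\widehat N$) follows from the respective cited theorem.

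First I would establish the structural claim asserted after \eqref{hk}: that $\bh_k$ is $L_{\nabla\bh}$-strongly-convex in $x$, $\hat\sigma_y$-strongly-concave in $y$, and $\widehat L$-smooth on $\dom\,p\times\dom\,q$, with $\hat\sigma_y$ and $\widehat L$ as in \eqref{hsigmaL}. This follows from Assumption~\ref{mmax-a}(iii) by accounting for the Hessian contributions of the quadratic regularizers: since $\bh$ has curvature in $x$ bounded below by $-L_{\nabla\bh}$, the term $L_{\nabla\bh}\|x-x^k\|^2$ (Hessian $2L_{\nabla\bh}I$) yields $L_{\nabla\bh}$-strong convexity in $x$ and raises the joint smoothness constant from $L_{\nabla\bh}$ to $3L_{\nabla\bh}$; in the case $\sigma_y = 0$ the term $-\epsilon\|y-y^0\|^2/(4D_q)$ supplies the strong concavity $\hat\sigma_y = \epsilon/(2D_q)$ and an additional smoothness $\epsilon/(2D_q)$. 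This justifies invoking the strongly-convex-strongly-concave solver Algorithm~\ref{mmax-alg1} on each subproblem with $\bar\sigma_x\leftarrow L_{\nabla\bh}$, $\bar\sigma_y\leftarrow\hat\sigma_y$, $L_{\nabla\h}\leftarrow\widehat L$.

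For $\sigma_y > 0$, I would note that $\hat\sigma_y = \sigma_y$ and $\widehat L = 3L_{\nabla\bh}$, so the outer proximal-point loop with tolerances $\hat\epsilon_k = \hat\epsilon_0/(k+1)$ and stopping rule $\|x^{k+1}-x^k\| \leq \epsilon/(4L_{\nabla\bh})$ coincides with \cite[Algorithm 1]{lu2024strongminimax}; applying \cite[Theorem 1]{lu2024strongminimax} then gives all four conclusions, with the interpolation term $(\hat\sigma_y-\sigma_y)D_q^2/2$ vanishing. For $\sigma_y = 0$, the smoothing term makes each subproblem $\hat\sigma_y$-strongly-concave, so Algorithm~\ref{mmax-alg2} is \cite[Algorithm 2]{lu2024first}, and \cite[Theorem 2]{lu2024first} delivers the analogous guarantees; here the value bound acquires the smoothing bias $(\hat\sigma_y-\sigma_y)D_q^2/2 = \epsilon D_q/4$. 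In both cases the total count $\widehat N$ is obtained by summing the per-call complexity of Algorithm~\ref{mmax-alg1}, which is of order $\sqrt{\widehat L/\min\{L_{\nabla\bh},\hat\sigma_y\}}\,\log(1/\hat\epsilon_k)$ at outer iteration $k$, over $k = 0,\dots,\widehat T$; the harmonic decay $\hat\epsilon_k = \hat\epsilon_0/(k+1)$ produces the $2\widehat T\log(\widehat T+1)$ contribution.

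The main obstacle is the $\sigma_y = 0$ case, specifically controlling the propagation of the smoothing error. One must confirm that an $\epsilon$-primal-dual stationary point produced for the smoothed surrogate $\bH(x,y)-\epsilon\|y-y^0\|^2/(4D_q)$ remains an $\epsilon$-primal-dual stationary point of the original $\bH$, and that the resulting bias in $\max_y\bH$ is bounded by exactly $\hat\sigma_y D_q^2/2$. The remaining work is purely bookkeeping: verifying algebraically that the common expressions for $\hat\alpha$, $\hat\delta$, $\widehat T$, and $\widehat N$ specialize correctly in each regime, which amounts to substituting the regime-dependent values of $\hat\sigma_y$ and $\widehat L$ from \eqref{hsigmaL} and matching constants against the two cited theorems.
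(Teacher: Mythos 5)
Your proposal is correct and follows essentially the same route as the paper: the paper offers no independent proof of Theorem~\ref{mmax-thm}, but justifies it exactly as you do, by observing that Algorithm~\ref{mmax-alg2} reduces to \cite[Algorithm 2]{lu2024first} when $\sigma_y=0$ and to \cite[Algorithm 1]{lu2024strongminimax} when $\sigma_y>0$, and then invoking \cite[Theorem 2]{lu2024first} and \cite[Theorem 1]{lu2024strongminimax} respectively, with the regime-dependent substitutions of $\hat\sigma_y$ and $\widehat L$ from \eqref{hsigmaL}. Your verification of the structure of $\bh_k$ (including the $L_{\nabla\bh}$-strong convexity in $x$, the smoothing-induced $\hat\sigma_y$-strong concavity, the $\widehat L$-smoothness, and the bias term $(\hat\sigma_y-\sigma_y)D_q^2/2=\epsilon D_q/4$ when $\sigma_y=0$) is precisely the reconciliation the paper leaves implicit.
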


\end{document}